\pgfplotsset{compat=1.15}
\newtheorem{theorem}{Theorem}[section]
\newtheorem{prop}[theorem]{Proposition}
\newtheorem{lemma}[theorem]{Lemma}
\newtheorem{definition}[theorem]{Definition}
\newtheorem{rmk}[theorem]{Remark}
\newtheorem{subtheorem}{Theorem}[subsection]
\newtheorem{subprop}[subtheorem]{Proposition}
\newtheorem{sublemma}[subtheorem]{Lemma}
\newtheorem{subdefinition}[subtheorem]{Definition}
\newtheorem{subrmk}[subtheorem]{Remark}
\newcommand{\ANZ}{\mathscr{A}(Z)}
\newcommand\FF{{\mathcal F}}
\newcommand\MM{{\mathcal M}}
\newcommand\OO{{\mathcal O}}
\newcommand\TT{{\mathcal T}}
\newcommand\R{{\mathbb R}}
\newcommand{\diam}{\operatorname{diam}}
\title{Polyhedral structure of maximal Gromov hyperbolic spaces with finite boundary}
\author{Kingshook Biswas}
\address{Stat-Math Unit, Indian Statistical Institute, 203 B. T. Rd., Kolkata 700108, India}
\email{kingshook@isical.ac.in}
\author{Arkajit Pal Choudhury}
\address{Stat-Math Unit, Indian Statistical Institute, 203 B. T. Rd., Kolkata 700108, India}
\email{rkjtpalchoudhury\_r@isical.ac.in}
\subjclass[2020]{Primary 52B99, 51F99; Secondary 51F30}
\keywords{Gromov hyperbolic spaces, Cross-ratio, Quasi-metric antipodal spaces, Injective metric spaces, Polyhedral complex}
\begin{document}
	\begin{abstract} The boundary $\partial X$ of a Gromov hyperbolic space $X$ carries a natural quasi-Moebius structure, induced by the family of visual quasi-metrics on the boundary $\rho_x = e^{-(\cdot|\cdot)_x}, x \in X$, which are all quasi-Moebius equivalent to each other. For boundary continuous Gromov hyperbolic spaces the visual quasi-metrics are Moebius equivalent to each other and one has a finer Moebius structure on the boundary. In this context, a natural problem is to try to reconstruct the space $X$ from its boundary $\partial X$. For a proper, geodesically complete, boundary continuous Gromov hyperbolic space $X$, the boundary $\partial X$ equipped with its cross-ratio is a particular kind of quasi-metric space, called a {\it quasi-metric antipodal space}. Given a quasi-metric antipodal space $Z$, one may consider the family of all hyperbolic fillings of $Z$.  In \cite{biswas2024quasi} it was shown that this family has a unique upper bound $\mathcal{M}(Z)$ (with respect to a natural partial order on hyperbolic fillings of $Z$), which can be described explicitly in terms of the cross-ratio on $Z$. 
		
		\medskip
		
		A proper, geodesically complete, boundary continuous Gromov hyperbolic $X$ is called a maximal Gromov hyperbolic space if it is an upper bound for the family of fillings of its boundary $\partial X$. In \cite{biswas2024quasi}, it was shown that the maximal Gromov hyperbolic spaces are precisely those of the form $\mathcal{M}(Z)$ for some quasi-metric antipodal space $Z$. A natural problem is to describe explicitly the maximal Gromov hyperbolic spaces $X$ whose boundary $\partial X$ is finite. 
		
		\medskip
		
		We show that for a maximal Gromov hyperbolic space $X$ with boundary $\partial X$ of cardinality $n$, the space $X$ is isometric to a finite polyhedral complex embedded in $(\mathbb{R}^n, ||\cdot||_{\infty})$ with cells of dimension at most $n/2$, given by attaching $n$ half-lines to vertices of a compact polyhedral complex. In particular the geometry at infinity of $X$ is trivial. 
		
		\medskip
		
		The combinatorics of the polyhedral complex is determined by certain relations $R \subset \partial X \times \partial X$ on the boundary $\partial X$, called {\it antipodal relations}. An antipodal relation $R$ on $\partial X$ is a relation given, for some $x \in X$, by the set of pairs $(\xi, \eta) \in \partial X \times \partial X$ which are antipodal with respect to $x$, i.e. there is a bi-infinite geodesic in $X$ joining $\xi, \eta$ and passing through $x$. Each cell of the polyhedral complex is given by a maximal collection of ``equi-antipodal" points in $X$, i.e. a maximal collection of points which have the same antipodal relation. Faces $C'$ of a cell $C$ given by an antipodal relation $R$ correspond to antipodal relations $R'$ containing $R$.   
		
		\medskip
		
		In \cite{biswas2024quasi} it was shown that maximal Gromov hyperbolic spaces are injective metric spaces. We give a shorter, simpler proof of this fact in the case of spaces with finite boundary, using the fact that their geometry at infinity is trivial. We also consider the space of deformations of a maximal Gromov hyperbolic space with finite boundary, and define an associated Teichmuller space and mapping class group.  
	\end{abstract}
	
	\bigskip
	
	\maketitle
	
	\tableofcontents
	
	\section{Introduction.}\label{intro}
	
	The Gromov product on the boundary of a Gromov hyperbolic space $X$ gives rise to a family of {\it visual quasi-metrics} on the boundary $\rho_x(.,.) = e^{-(.|.)_x}, x \in X$. The visual quasi-metrics $\rho_x$ are {\it quasi-Moebius equivalent} to each other, in the sense that the cross-ratios of these quasi-metrics $\rho_x$ as $x$ ranges over $X$ differ by a uniformly bounded multiplicative error, and hence the visual quasi-metrics define a {\it quasi-Moebius} structure on the boundary $\partial X$. It is well-known that this quasi-Moebius structure on the boundary determines the quasi-isometry class of the space $X$. More precisely, Jordi \cite{jordi} proved that a power-quasi-Moebius map between boundaries of visual, roughly geodesic, Gromov hyperbolic spaces extends to a quasi-isometry between the spaces (extending previous results of Bonk-Schramm \cite{bonk-schramm} and Buyalo-Schroeder \cite{buyalo2007elements} on quasi-isometric extension of quasi-symmetric and quasi-Moebius maps between boundaries). The proofs of these results rely on the notion of {\it hyperbolic fillings} (see \cite{buyalo2007elements}): given a compact quasi-metric space $Z$, a hyperbolic filling of $Z$ is a pair $(X, f \colon \partial X \to Z)$ where $X$ is a Gromov hyperbolic space and $f$ is a quasi-Moebius homeomorphism.  
	
	\medskip  
	
	For a Gromov hyperbolic space $ X $ which is \textit{boundary continuous}, i.e. the Gromov product on $ X $ extends continuously to the Gromov boundary $ \partial X $ (cf. \cite[Section 3.4.2, p. 32]{buyalo2007elements}),  the visual quasi-metrics are not just quasi-Moebius equivalent, but are in fact {\it Moebius equivalent}, i.e. they all have the same cross-ratio. In this case one has a finer {\it Moebius structure} on the boundary $\partial X$, and it is a natural question whether this finer Moebius structure is sufficient to recover the space $X$ up to isometry and not just quasi-isometry.  This is an important problem: as explained in \cite{biswas2015moebius}, a positive answer to this question for the class of negatively curved Hadamard manifolds would lead to a solution to the longstanding {\it marked length spectrum rigidity conjecture} of Burns and Katok. In \cite{biswas2015moebius} a partial answer to this question was given, namely it was shown that any Moebius homeomorphism $f \colon \partial X \to \partial Y$ between the boundaries of proper, geodesically complete CAT(-1) spaces $X, Y$ extends to a $(1, \log 2)$-quasi-isometry $F \colon X \to Y$ between the spaces. 
	
	\medskip
	
	In \cite{biswas2024quasi} a natural class of Gromov hyperbolic spaces, the {\it maximal Gromov hyperbolic spaces}, was introduced, for which it was shown that the Moebius structure on the boundary determines the space up to isometry. We first consider the class of Gromov hyperbolic spaces which are proper, geodesically complete and boundary continuous, and call such spaces {\it good Gromov hyperbolic spaces}. As shown in \cite{biswas2024quasi}, the boundary of such a space is a particular type of compact space called a {\it quasi-metric antipodal space} (see \Cref{antipodal definition} for the definition of antipodal spaces). Given a quasi-metric antipodal space $Z$, one considers all hyperbolic fillings $(X, f \colon \partial X \to Z)$ of $Z$, where $X$ is a good Gromov hyperbolic space and $f \colon \partial X \to Z$ is a Moebius homeomorphism.  Note that in this case, even the existence of such a filling is nontrivial, as one requires the homeomorphism $f \colon \partial X \to Z$ to be Moebius and not just quasi-Moebius (the fillings of Bonk-Schramm and Buyalo-Schroeder only give quasi-Moebius homeomorphisms). There is a natural partial order on these fillings: we say $(X, f) \leq (Y, g)$ if the Moebius homeomorphism $h = g^{-1} \circ f \colon \partial X \to \partial Y$ extends to an isometric embedding $H \colon X \to Y$ (it is shown in \cite{biswas2024quasi} that this does indeed define a partial order). A good Gromov hyperbolic space $X$ is said to be a maximal Gromov hyperbolic space if it is an upper bound for the family of hyperbolic fillings of its boundary $\partial X$, i.e. $(Y, f \colon \partial Y \to \partial X) \leq (X, id \colon \partial X \to \partial X)$ for all hyperbolic fillings $(Y, f)$ of $\partial X$. 
	
	\medskip 
	
	In \cite{biswas2024quasi}, it was shown that for any quasi-metric antipodal space $Z$, the family of hyperbolic fillings of $Z$ has a unique upper bound $\mathcal{M}(Z)$ (in particular this family is nonempty) which is a maximal Gromov hyperbolic space. In particular, any good Gromov hyperbolic space admits an isometric embedding into a maximal Gromov hyperbolic space. Moreover, there is an explicit description of the space $\mathcal{M}(Z)$: it is given by the space of Moebius equivalent antipodal functions on $Z$ (we refer to \Cref{antipodal} for the definition of antipodal functions and the metric on $\mathcal{M}(Z)$). It was also shown in \cite{biswas2024quasi} that any maximal Gromov hyperbolic space $X$ is isometric to the space $\mathcal{M}(\partial X)$, and so the class of maximal Gromov hyperbolic spaces coincides with the class of spaces $\mathcal{M}(Z)$ as $Z$ ranges over all quasi-metric antipodal spaces. Moreover, any Moebius homeomorphism $f \colon \partial X \to \partial Y$ between boundaries of maximal Gromov hyperbolic spaces extends to an isometry $F \colon X \to Y$ between the spaces, giving an equivalence of categories between maximal Gromov hyperbolic spaces and quasi-metric antipodal spaces. It is shown in \cite{biswas2024quasi} that proper, geodesically complete trees are maximal Gromov hyperbolic spaces whose boundaries are precisely the ultrametric antipodal spaces, and so the above equivalence of categories extends the equivalence between trees and ultrametric spaces due to Beyrer-Schroeder \cite{beyrer-schroeder}. 
	
	\medskip
	
	The main goal of this article is to obtain an explicit description of all maximal Gromov hyperbolic spaces $X$ whose boundary $\partial X$ is a finite set.  We prove the following:
 
	\medskip
	
	\begin{theorem}\label{1st Thm}
		Let $X$ be a maximal Gromov hyperbolic space such that $\partial X$ is finite of cardinality $n$. Then $X$ is isometric to a polyhedral complex $\mathcal{P} \subset \mathbb{R}^n$ embedded in $(\R^n,\|\cdot\|_\infty)$ with finitely many polyhedral cells each of dimension at most $n/2$. The polyhedral complex $\mathcal{P}$ is given by attaching $n$ half-lines to a compact polyhedral complex.
	\end{theorem}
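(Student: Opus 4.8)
The plan is to use the explicit description of $X$ as $\mathcal{M}(\partial X)$, the space of Moebius-equivalent antipodal functions on the finite set $Z = \partial X$. Since $Z$ is finite of cardinality $n$, an antipodal function is simply a function $f \colon Z \to \mathbb{R}$ satisfying the antipodal constraints coming from the cross-ratio on $Z$; modding out by the Moebius equivalence (additive translation) realizes $\mathcal{M}(Z)$ as a subset of $\mathbb{R}^Z / \mathbb{R}(1,\dots,1) \cong \mathbb{R}^n$, and the metric induced by the sup-norm on antipodal functions descends to the $\|\cdot\|_\infty$ metric on this quotient. So the first step is to write down explicitly, from the definitions in \Cref{antipodal}, the finitely many (in)equalities cutting out $\mathcal{M}(Z) \subset \mathbb{R}^n$, and observe that each is affine-linear (piecewise-linear after taking appropriate minima), so that $\mathcal{P} := \mathcal{M}(Z)$ is a locally finite polyhedral complex in $(\mathbb{R}^n, \|\cdot\|_\infty)$ with finitely many cells.

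Next I would identify the cells combinatorially. For a point $p \in \mathcal{M}(Z)$ corresponding to a choice of basepoint $x \in X$, the antipodal relation $R_x \subset Z \times Z$ records which pairs $(\xi,\eta)$ lie on a geodesic through $x$; in terms of the antipodal function this is exactly the set of pairs where a certain defining inequality is an equality (the function is "extremal" or the Gromov product vanishes). Thus the cell containing $p$ is cut out by turning the inequalities indexed by $R_x$ into equalities, and two points lie in the (relative interior of the) same cell iff they have the same antipodal relation; faces correspond to enlarging $R$, i.e. to antipodal relations $R' \supseteq R$. This gives the combinatorial description promised in the abstract. The dimension bound $\le n/2$ should come from a matching/parity argument: an antipodal relation $R$ on an $n$-element set, being (roughly) a partial matching or pairing structure forced by the geodesic-through-$x$ condition, has its "number of free parameters" controlled by the number of unmatched points or independent pairs, which is at most $n/2$; I would prove that on a cell of dimension $d$ the antipodal relation forces at least $2d$ (or $n - d \ge d$) points to be paired up, forcing $d \le n/2$.

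Finally, the half-line/compactness statement: the non-compact directions of $\mathcal{P}$ correspond to "going to infinity toward a boundary point $\xi$", i.e. to moving the basepoint along a geodesic ray toward some $\xi \in Z$. Since $Z$ has $n$ points, there are exactly $n$ such asymptotic directions, and I would show that outside a large compact set $\mathcal{P}$ consists precisely of $n$ closed half-lines (each isometric to $[0,\infty)$, being a ray toward one $\xi$), attached to vertices of the compact part; the recession cone / behavior at infinity in each direction is one-dimensional because once the basepoint is far out toward $\xi$ the antipodal relation stabilizes to the "maximal" one pairing $\xi$ with everything, leaving a single free parameter (the Busemann-type coordinate). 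Concretely, one shows that for $t$ large the point at parameter $t$ along the $\xi$-ray has antipodal function $f_t$ agreeing with $f_{t_0}$ except for a shift by $t$ in the $\xi$-coordinate, so these points form a half-line, and the union of $\mathcal{P}$ minus these $n$ open half-rays is bounded, hence (being a finite polyhedral complex) compact. The triviality of the geometry at infinity follows since the boundary at infinity of such a complex is just the $n$ endpoints of the half-lines.

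The main obstacle I expect is the dimension bound $d \le n/2$: extracting from the precise form of the antipodal/cross-ratio constraints the statement that a $d$-dimensional family of equi-antipodal functions forces at least $2d$ of the $n$ boundary points to be "rigidly paired" requires a careful combinatorial analysis of the rank of the linear system defined by $R$, and getting the sharp constant $n/2$ (rather than something weaker) is where the real work lies; a secondary subtlety is verifying that the sup-norm metric on $\mathcal{M}(Z)$ really is the \emph{restriction} of $\|\cdot\|_\infty$ on $\mathbb{R}^n$ and that $\mathcal{M}(Z)$ is closed, so that it is genuinely a polyhedral \emph{complex} and not merely a polyhedral set with the length metric.
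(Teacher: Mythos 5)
Your route is essentially the paper's own (\Cref{Antipodal relations and Polyhedral structure}): identify $X$ with $\mathcal{M}(Z,\rho_0)$, embed in $(\mathbb{R}^n,\|\cdot\|_\infty)$, stratify by antipodal relations, match faces with larger relations, bound the dimension by a pairing argument, and exhibit $n$ rays plus a compact part. Two corrections, one minor and one substantive. The minor one: there is no quotient by $\mathbb{R}(1,\dots,1)$. Adding a constant $c$ to $\tau$ rescales $\rho$ by $e^{c}$ and destroys the diameter-one normalization, so at most one translate is antipodal; the space $\mathcal{M}(Z,\rho_0)$ sits \emph{directly} inside $\mathcal{UM}(Z,\rho_0)\cong\mathbb{R}^n$ via $\tau_\rho=\log\frac{d\rho}{d\rho_0}$ (and note $\mathbb{R}^Z/\mathbb{R}\mathbf{1}$ would have dimension $n-1$, not $n$). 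With that fixed, your cell picture is exactly the paper's: the cells are $C(R)=\{\tau:\ \tau_i+\tau_j+a_{ij}=0\ \text{on}\ R,\ \le 0\ \text{off the diagonal}\}$, their relative interiors are the equi-antipodal sets, intersections and faces are governed by $C(R_1)\cap C(R_2)=C(R_1\cup R_2)$ (\Cref{cell}, \Cref{face}) — these small verifications are what make $\mathcal{P}$ a genuine complex. Your dimension sketch is also the paper's argument in embryo: $\dim C(R)=\dim A(R)$ equals the number of connected components of the graph $\Gamma_R$ containing no odd cycle (odd-cycle components are rigid, bipartite ones contribute exactly one parameter), and admissibility forces every component to have at least two vertices, whence $\dim\le n/2$; you would need to prove this component count, and your ``$n-d\ge d$'' bookkeeping is not quite right since \emph{all} $n$ points are paired, but the matching idea is the correct one.

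The genuine gap is the compactness/half-line statement. You assert that $\mathcal{P}$ minus the $n$ open rays is bounded because ``far out toward $\xi$ the antipodal relation stabilizes to the one pairing $\xi$ with everything,'' but that is precisely what has to be proved — namely that every cell whose relation is not of this Type 1 form is bounded — and your recession-cone remark does not supply the input. The paper's proof goes as follows: any admissible relation either has one point paired with all others and no other pairs (Type 1; these cells are computed explicitly in \eqref{ray} and are exactly the $n$ geodesic rays), or contains two disjoint pairs $(i,j)$ and $(k,l)$ (Type 2); in the latter case every point of the cell lies simultaneously on a bi-infinite geodesic joining $i$ to $j$ and on one joining $k$ to $l$, and \Cref{bddlema} (a $\delta$-hyperbolicity estimate on Gromov products) shows the intersection of the two geodesic bundles is bounded, so the cell is a polytope. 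One could instead bound Type 2 cells by elementary manipulation of the defining linear equalities and inequalities, but some such argument is required; without it the ``attaching $n$ half-lines to a compact polyhedral complex'' clause of the theorem is unproved.
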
 
	
	\medskip
	
	The combinatorics of the polyhedral complex is determined by certain relations $R \subset \partial X \times \partial X$ on the boundary $\partial X$, called {\it antipodal relations}. An antipodal relation $R$ on $\partial X$ is a relation given, for some $x \in X$, by the set of pairs $(\xi, \eta) \in \partial X \times \partial X$ which are antipodal with respect to $x$, i.e. there is a bi-infinite geodesic in $X$ joining $\xi, \eta$ and passing through $x$. Any $x \in X$ gives rise to a corresponding antipodal relation $R_x$ on $\partial X$. Each cell $C$ of the polyhedral complex corresponds to an antipodal relation $R$, namely $C$ is given by the collection of points $x$ in $X$ whose antipodal relation is $R$, $R_x = R$.  Faces $C'$ of a cell $C$ given by an antipodal relation $R$ correspond to antipodal relations $R'$ containing $R$.  In particular, vertices correspond to maximal antipodal relations. Two cells $C_1, C_2$ corresponding to relations $R_1, R_2$ share a common face $C$ whenever there is an antipodal relation $R$ containing $R_1 \cup R_2$.  
	
	\medskip
	
	The above Theorem has another consequence which is not obvious a priori, namely that the geometry at infinity of any maximal Gromov hyperbolic space with finite boundary is ``trivial", in the sense that outside a compact the space is just a finite union of geodesic rays. This is later used to give a simpler proof that maximal Gromov hyperbolic spaces with finite boundary are injective (see below). 
	
	\medskip
	
	In \cite{biswas2024quasi}, a close connection between maximal Gromov hyperbolic spaces and another well-known class of metric spaces was described, namely the {\it injective metric spaces}. We recall the definition of injective metric spaces 
	(also known as {\it hyperconvex spaces}), first introduced by Aronszajn and Panitchpakdi \cite{aronszajn1956extension}. A metric space $X$ is called injective if for any metric space $A$ and any subspace $B\subset A$ any $1-$Lipschitz map $f \colon B\to X$ has a $1-$Lipschitz extension $F \colon A\to X$ such that $F|_B=f$ (see \cite[Chapter 3]{petrunin2023pure}, \cite[Section 2]{lang2013injective}). Examples of injective spaces are:  the real line, closed intervals in the real line, geodesically complete metric trees and $(\R^n,\|\cdot\|_\infty)$ (more generally $l^\infty(\mathcal K)$ for any arbitrary set $\mathcal K$). There is a well-known characterization of injective metric spaces, which we shall state here without proof.
	
	\medskip
	
	\begin{theorem}[see \cite{aronszajn1956extension}, \cite{lang2013injective}]\label{hyperconvex thm}
		Let $X$ be a metric space. Then the following are equivalent:
		\begin{enumerate}[{\it (1)}]
			\item X is an injective metric space.\medskip
			
			\item If $i \colon X\to Y$ is an isometric embedding into any metric space $Y$ there exists a 1-Lipschitz map $\pi \colon Y\to X$ such that $\pi\circ i= id|_X$, i.e. $X$ is an `absolute 1-Lipschitz retract'.\medskip
			
			\item If $\{B(x_i,r_i)\}_{i\in I}$ is a family of closed balls in $X$ such that 
			\begin{equation}\label{intersection}
				r_i+r_j\ge d(x_i,x_j)
			\end{equation} for any $i,j\in I$ then 
			$\cap_{i\in I}B(x_i,r_i)\neq \emptyset$, i.e. $X$ is `hyperconvex'.\label{hyp}
		\end{enumerate}
	\end{theorem}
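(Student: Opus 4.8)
The plan is to prove the three implications $(1) \Rightarrow (2) \Rightarrow (3) \Rightarrow (1)$, each of which becomes essentially formal once one has in hand the single geometric fact that every $l^\infty(\mathcal K)$ is hyperconvex. The implication $(1) \Rightarrow (2)$ is immediate: given an isometric embedding $i \colon X \to Y$, the inverse map $i^{-1} \colon i(X) \to X$ is $1$-Lipschitz on the subspace $i(X) \subseteq Y$, so injectivity of $X$ applied to the pair $i(X) \subseteq Y$ produces a $1$-Lipschitz extension $\pi \colon Y \to X$, and $\pi \circ i = id|_X$ holds by construction.

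The substantive step is $(2) \Rightarrow (3)$. First I would verify that $l^\infty(\mathcal K)$ is hyperconvex for every set $\mathcal K$: given closed balls $B(x_i,r_i)$ with $r_i + r_j \ge \|x_i - x_j\|_\infty$, in each coordinate $k \in \mathcal K$ the closed intervals $[\,x_i(k) - r_i,\ x_i(k) + r_i\,]$ are pairwise intersecting (since $|x_i(k) - x_j(k)| \le \|x_i - x_j\|_\infty \le r_i + r_j$), hence by the one-dimensional Helly property (pairwise-intersecting intervals on $\R$ have a common point) they share a point $z(k)$; fixing one index $i_0$ one checks $|z(k) - x_{i_0}(k)| \le r_{i_0}$ for all $k$, so $z = (z(k))_k$ is bounded, lies in $l^\infty(\mathcal K)$, and lies in every ball. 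Then I would embed $X$ isometrically into some $l^\infty(\mathcal K)$ — the Kuratowski embedding $x \mapsto (d(x,\cdot) - d(x_0,\cdot))$, after fixing a basepoint $x_0$, does the job — invoke $(2)$ to get a $1$-Lipschitz retraction $\pi \colon l^\infty(\mathcal K) \to X$, pick a common point $z$ of the balls $B(x_i,r_i)$ inside $l^\infty(\mathcal K)$ (these exist by hyperconvexity of $l^\infty(\mathcal K)$, since the hypothesis $r_i + r_j \ge d(x_i,x_j)$ persists in the larger space), and conclude $\pi(z) \in \bigcap_{i} B(x_i,r_i)$ from the estimate $d(\pi(z),x_i) = d(\pi(z),\pi(x_i)) \le \|z - x_i\|_\infty \le r_i$.

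For $(3) \Rightarrow (1)$ I would run Zorn's lemma on the poset of $1$-Lipschitz maps $g \colon B' \to X$ with $B \subseteq B' \subseteq A$ and $g|_B = f$, ordered by extension; a chain has an upper bound given by the union of the graphs of its members, which remains a $1$-Lipschitz map. A maximal element $g \colon B' \to X$ must satisfy $B' = A$, for if $a \in A \setminus B'$ then the closed balls $\{\, B(g(b), d(a,b)) : b \in B' \,\}$ satisfy the hyperconvexity condition because $d(g(b),g(b')) \le d(b,b') \le d(b,a) + d(a,b')$; a point of $X$ in their intersection extends $g$ to $B' \cup \{a\}$, contradicting maximality.

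I expect the main obstacle to be the step $(2) \Rightarrow (3)$: it is the only place where one must produce something concrete, namely a hyperconvex ambient space containing $X$ together with the verification — via the one-dimensional Helly property — that $l^\infty(\mathcal K)$ is itself hyperconvex; the other two implications are soft. A minor technical point to keep track of throughout is the treatment of possibly infinite families of balls (and radii), and the dependence on the basepoint in the Kuratowski embedding when $X$ is unbounded.
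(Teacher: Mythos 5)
The paper states this theorem without proof, citing \cite{aronszajn1956extension} and \cite{lang2013injective}, so there is no in-paper argument to compare against. Your cycle $(1)\Rightarrow(2)\Rightarrow(3)\Rightarrow(1)$ is correct and is essentially the standard proof from those references: the retraction argument for $(1)\Rightarrow(2)$, the Kuratowski embedding into $l^\infty(\mathcal K)$ plus coordinatewise Helly for $(2)\Rightarrow(3)$, and the Zorn's lemma extension-by-one-point argument for $(3)\Rightarrow(1)$ are all sound, including your handling of arbitrary index sets and the boundedness of the coordinatewise-chosen point $z$.
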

	
	\medskip
	
	Isbell \cite{isbell1964six} showed that for any metric space $X$, there is a unique ``smallest" injective space $E(X)$ into which $X$ embeds isometrically (here ``smallest" means that any isometric embedding of $X$ into an injective metric space factors through $E(X)$).  The metric space $E(X)$ is called the injective hull of $X$. This construction was later studied by Dress \cite{dress1984trees} and called it the ``tight-span" of a metric space $X$. An exposition connecting all these articles can be found in \cite{lang2013injective}. For a finite metric space $X$, it is known that the injective hull $E(X)$ is a polyhedral complex of dimension at most $\#X/2$ (for a proof, see for example \cite{lang2013injective}). For further details on the connection between polyhedral structures and injective spaces, we refer to \cite{pavon2016injective}, \cite{pavon2016geometry}, \cite{huber2019polytopal}. Injective hulls or the tight-span of finite metric spaces have appeared in different areas of mathematics such as phylogenetic analysis (see \cite{dress2002explicit}, \cite{dress1996Ttheory}), network flow theory (\cite{dress1998aa}), and tropical geometry (\cite{develin2004}).
	
	\medskip
	
	In \cite{biswas2024quasi}, it was shown that for a good Gromov hyperbolic space $X$, the injective hull $E(X)$ is naturally isometric to the maximal Gromov hyperbolic space $\mathcal{M}(\partial X)$, and the space $X$ itself is injective if and only if it is maximal. In particular, amongst the class of good Gromov hyperbolic spaces, the injective ones, being maximal, are completely determined by the Moebius structures on their boundaries. Furthermore, all maximal Gromov hyperbolic spaces are injective. 
	
	\medskip
	
	In the case of maximal Gromov hyperbolic spaces with finite boundary, we use the fact that the geometry at infinity is ``trivial" in order to give a simpler proof of this last fact in \Cref{Injective metric space and MZ}. In this case, a maximal Gromov hyperbolic space $X$ with finite boundary can be seen as a limit of injective hulls of finite metric spaces: 
	fixing a base-point $x \in X$, for all $r > 0$ large enough, the sphere $S = S(x, r)$ is finite of cardinality $n = \#\partial X$ \footnote{Here `$\#$' denotes cardinality, so $\#\partial X$ denotes the cardinality of the set $\partial X$}, and the closed ball $B(x, r)$ is isometric to the injective hull $E(S)$ of the sphere $S$. The polyhedral structure of $X$ is given by attaching $n$ Euclidean half-lines to the compact polyhedral complex $E(S)$ (as mentioned above, the injective hull of the finite metric space $S$ carries a natural polyhedral structure). Informally speaking, the space $X$ may be thought of as the ``injective hull" of the boundary at infinity $\partial X$. 
	
	\medskip
	
	In \Cref{Teich}, we consider metric deformations of maximal Gromov hyperbolic spaces with finite boundary, and define the Teichmuller space and mapping class group of such a maximal Gromov hyperbolic space. We show that the set of all Moebius equivalence classes of antipodal functions on a finite set $Z$ of cardinality $m$ can be parametrized by an open simplex of dimension $m(m-3)/2$ (see \Cref{parametrization} and \Cref{big-Teich theorem}). Considering the different homeomorphism types of the maximal Gromov hyperbolic spaces $\mathcal{M}(Z, \rho)$ as $\rho$ ranges over all antipodal functions on $Z$ gives a decomposition of this open simplex (the ``big Teichmuller space") into a finite disjoint union of Teichmuller spaces (one for each homeomorphism type, there are finitely many such). In \Cref{4 points} we describe this decomposition and the different homeomorphism types for the case when $Z$ has cardinality equal to $4$. Finally, in \Cref{problems}, we conclude with a brief discussion of some open problems.   
	
	\bigskip
	
	\section{Preliminaries and definitions}
	\noindent In this section, we shall recall some preliminaries and prove some lemmas that we shall use later in the paper.
	\medskip
	
	\subsection{Antipodal spaces and their associated Moebius spaces}\label{antipodal}
	This subsection briefly recalls definitions and results developed in \cite{biswas2024quasi}. We refer the reader to \cite{biswas2024quasi} for a more comprehensive discussion.
	
	\begin{subdefinition}{\bf (Separating function)}
		Let $Z$ be a compact metrizable space with at least four points.
		We call a function $\rho_0\colon Z\times Z\to [0,\infty)$ is separating if:
		\medskip
		
		(1) $\rho_0$ is continuous
		\medskip
		
		(2) $\rho_0$ is symmetric, i.e. $\rho(\xi,\eta)=\rho(\eta,\xi)$ for all $\xi,\eta\in Z$
		\medskip
		
		(3) $\rho_0$ satisfies positivity, i.e. $\rho(\xi,\eta)=0$ if and only if $\xi=\eta$
		
		For our discussion, we shall call the pair $(Z,\rho_0)$ a compact `semi-metric' space, where $\rho_0$ is a separating function defined on a compact metrizable space $Z$.
	\end{subdefinition}
	\medskip
	
	\noindent The `cross-ratio' with respect to a separating function $\rho_0$ for a quadruple of distinct points
	$\xi,\xi',\eta,\eta' \in Z$ is defined to be
	\begin{equation}\label{cross-ratio}
		[\xi,\xi',\eta,\eta']_{\rho_0}\coloneqq
		\frac{\rho_0(\xi,\eta)\rho_0(\xi',\eta')}{\rho_0(\xi,\eta')\rho_0(\xi',\eta)}
	\end{equation}
	
	\noindent Two separating functions $\rho_0,\rho_1$ on $Z$ are said to be {\it Moebius equivalent} if both have the same cross-ratios i.e.
	\begin{equation}\label{same cross-ratio}
		[\xi, \xi', \eta, \eta']_{\rho_0} = [\xi, \xi', \eta, \eta']_{\rho_1}
	\end{equation}
	for all distinct $\xi, \xi', \eta, \eta' \in Z$.
	
	\begin{sublemma}[{\bf G}eometric {\bf M}ean-{\bf V}alue {\bf T}heorem, \cite{biswas2024quasi}]\label{GMVT}
		Two separating functions $\rho_0, \rho_1$ are Moebius equivalent if and only if there exists a positive continuous function $\phi  \colon  Z \to (0, \infty)$ such that,
		$$
		\rho_1(\xi, \eta)^2 = \phi(\xi) \phi(\eta) \rho_0(\xi, \eta)^2
		$$
		for all $\xi, \eta \in Z$.
		Such a function $\phi$ is unique. It is called the derivative of $\rho_1$ with respect to $\rho_0$ and is denoted by $\frac{d\rho_1}{d\rho_0}$. 
	\end{sublemma}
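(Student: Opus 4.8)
The plan is to prove the two implications separately and then treat uniqueness. The implication \emph{``$\Leftarrow$''} is a direct computation. If $\rho_1(\xi,\eta)^2 = \phi(\xi)\phi(\eta)\rho_0(\xi,\eta)^2$, then $\rho_1(\xi,\eta) = \sqrt{\phi(\xi)}\,\sqrt{\phi(\eta)}\,\rho_0(\xi,\eta)$, and substituting this into the definition \eqref{cross-ratio} of the cross-ratio of $\rho_1$ for a quadruple of distinct points $\xi,\xi',\eta,\eta'$, each factor $\sqrt{\phi(\cdot)}$ appears exactly once in the numerator and once in the denominator, so all of them cancel and one is left with $[\xi,\xi',\eta,\eta']_{\rho_1}=[\xi,\xi',\eta,\eta']_{\rho_0}$; hence $\rho_0$ and $\rho_1$ are Moebius equivalent.

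For the implication \emph{``$\Rightarrow$''}, introduce $F(\xi,\eta) := \rho_1(\xi,\eta)/\rho_0(\xi,\eta)$, which by condition (3) and continuity of $\rho_0,\rho_1$ is a positive, symmetric, continuous function on the complement of the diagonal in $Z\times Z$. Rewriting the equality of cross-ratios \eqref{same cross-ratio}, one checks that it is equivalent to the multiplicative functional equation
\[
F(\xi,\eta)\,F(\xi',\eta') = F(\xi,\eta')\,F(\xi',\eta)
\]
for all quadruples of distinct points $\xi,\xi',\eta,\eta'\in Z$. The goal is then to show that $F$ factors as $F(\xi,\eta)=g(\xi)g(\eta)$ for some positive continuous $g\colon Z\to(0,\infty)$; setting $\phi:=g^2$ finishes the proof, since then $\rho_1(\xi,\eta)^2 = F(\xi,\eta)^2\rho_0(\xi,\eta)^2 = g(\xi)^2 g(\eta)^2 \rho_0(\xi,\eta)^2$ for $\xi\neq\eta$, while both sides vanish when $\xi=\eta$.

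To build $g$, fix three distinct points $p,q,r\in Z$ (available since $\#Z\geq 4$), put $g(\xi):=\bigl(F(\xi,p)F(\xi,q)/F(p,q)\bigr)^{1/2}$ for $\xi\notin\{p,q\}$, and extend by $g(p):=F(p,r)/g(r)$ and $g(q):=F(q,r)/g(r)$. A short case analysis — according to whether $\xi,\eta$ lie in $\{p,q\}$ or not, invoking the functional equation on appropriate quadruples from $\{\xi,\eta,p,q,r\}$ — shows $F(\xi,\eta)=g(\xi)g(\eta)$ for all distinct $\xi,\eta\in Z$. Continuity of $g$ is then immediate: on $Z\setminus\{r\}$ one has $g(\xi)=F(\xi,r)/g(r)$, which is continuous there, and on $Z\setminus\{p\}$ one has $g(\xi)=F(\xi,p)/g(p)$; these two open sets cover $Z$ and the expressions agree, so $g$ is continuous on $Z$. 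For uniqueness, if $\phi_1,\phi_2$ both satisfy the identity then $\phi_1(\xi)\phi_1(\eta)=\phi_2(\xi)\phi_2(\eta)$ for all $\xi\neq\eta$, so $\phi_1/\phi_2$ is constant on $Z$ (using $\#Z\geq 3$: any two points admit a common third point), with value $c$ satisfying $c^2=1$, whence $c=1$ by positivity.

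The step I expect to be the main, albeit elementary, obstacle is checking that the candidate $g$ defined from the two base points $p,q$ really factors $F$ \emph{globally} and simultaneously extends continuously across $p,q$ and $r$: the naive square-root expression degenerates to a $0/0$ form as one approaches a base point, so one must pass to the equivalent description $g=F(\cdot,r)/g(r)$ (and its analogue near $r$), and the mutual consistency of all these formulas is precisely what the functional equation encodes — organizing this bookkeeping cleanly is the only real work.
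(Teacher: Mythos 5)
Your argument is correct. Note that the paper itself gives no proof of \Cref{GMVT} --- it is quoted from \cite{biswas2024quasi} --- but your proof is essentially the standard one from that source: the easy direction is cancellation of the $\sqrt{\phi}$-factors in the cross-ratio, and for the converse one sets $F=\rho_1/\rho_0$ off the diagonal, notes that equality of cross-ratios is exactly the relation $F(\xi,\eta)F(\xi',\eta')=F(\xi,\eta')F(\xi',\eta)$, and defines $\phi(\xi)=g(\xi)^2$ with $g(\xi)^2=F(\xi,p)F(\xi,q)/F(p,q)$ for fixed base points; your case analysis (using that $\#Z\ge 4$ supplies the needed quadruples of distinct points), the continuity argument via the two overlapping formulas $g=F(\cdot,r)/g(r)$ and $g=F(\cdot,p)/g(p)$, and the uniqueness argument via a common third point are all sound.
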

	
	\medskip
	
	\noindent Next, we define the Unrestricted Moebius space as a consequence of GMVT.
	
	\begin{subdefinition}[{\bf Unrestricted Moebius space of a separating function}, \cite{biswas2024quasi}]
		Let $\rho_0$ be a separating function on a compact metrizable space $Z$ with at least four points. The Unrestricted Moebius space $\mathcal{UM}(Z,\rho_0)$ of the separating function $\rho_0$ is the metric space
		\begin{equation*}
			\mathcal{UM}(Z,\rho_0) \coloneqq \{\ \rho \ | \ \rho \ \text{is a separating function Moebius equivalent to} \ \rho_0 \},
		\end{equation*}
		equipped with the metric
		\begin{equation*}
			d_{\mathcal{M}}( \rho_1, \rho_2 )\coloneqq \left|\left| \log \frac{d\rho_2}{d\rho_1} \right|\right|_{\infty},
		\end{equation*} 
		for $\rho_1, \rho_2 \in \mathcal{UM}(Z,\rho_0)$.
		\medskip	
	\end{subdefinition}
	
	We also have the map
	\begin{equation}\label{embedding}
		\begin{split}
			i_{\rho_0} \colon (\mathcal{UM}(Z,\rho_0), d_{\mathcal{M}})& \to ( C(Z),\|\cdot\|_{\infty} ) \\
			\rho & \mapsto \log \frac{d\rho}{d\rho_0}
		\end{split}
	\end{equation}
	which is a  surjective isometry with the inverse given by the map \label{isometry}
	\begin{align*}
		E_{\rho_0} \colon ( C(Z), \|\cdot\|_{\infty} ) & \to  (\mathcal{UM}(Z,\rho_0), d_{\mathcal{M}})\\
		\tau & \mapsto E_{\rho_0}(\tau)
	\end{align*}
	where for $\tau \in C(Z)$, the separating function $\rho = E_{\rho_0}(\tau) \in \mathcal{UM}(Z)$ is defined by
	\begin{equation*}
		\rho(\xi, \eta)^2 \coloneqq e^{\tau(\xi)} e^{\tau(\eta)} \rho_0(\xi, \eta)^2
	\end{equation*}
	for all $\xi, \eta  \in Z$. Moreover, $\log \frac{d\rho}{d\rho_0} = \tau$.
	\medskip
	
	\noindent Now we define :
	
	\begin{subdefinition}{\bf (Antipodal function)}\label{antipodal definition}
		Let $(Z,\rho_0)$ be a compact semi-metric space. The separating function $\rho_0$ is called an antipodal function if it satisfies the following two properties:\medskip
		
		(1) $\rho_0$ has diameter one, i.e. $\sup_{\xi, \eta \in Z} \rho_0(\xi, \eta) = 1$.\medskip
		
		(2) $\rho_0$ is antipodal, i.e. for all $\xi \in Z$ there exists $\eta \in Z$ such that $\rho_0(\xi, \eta) = 1$.\medskip
		
		\noindent We call a compact semi-metric spaces $(Z,\rho_0)$ an antipodal space, if $\rho_0$ is an antipodal function.
		
	\end{subdefinition}
	\medskip
	
	\noindent A {\it quasi-metric antipodal space} is an antipodal space where the antipodal function is a quasi-metric. 
	\medskip
	
	\noindent Here is the definition of the space of our interest:
	
	\begin{subdefinition}[{\bf Moebius space of antipodal function}, \cite{biswas2024quasi}]\label{Moebius space}
		Let $(Z,\rho_0)$ be an antipodal space. The Moebius space $\mathcal{M}(Z,\rho_0)$ is defined to be the metric space
		$$
		\mathcal{M}(Z,\rho_0) \coloneqq \{ \rho \in \mathcal{UM}(Z,\rho_0) \ | \ \rho \ \text{is an antipodal function} \ \}
		$$
		(equipped with the metric $d_{\mathcal{M}}$ as defined above).
	\end{subdefinition}
	
	\noindent We have observed $\mathcal{UM}(Z,\rho_0)$ is isometric to $(C(Z),\|\cdot\|_\infty)$, hence it is proper if and only if $Z$ is finite. However, $\mathcal{M}(Z,\rho_0)$ is always a proper metric space (i.e. closed bounded balls are compact) and a closed subset of $\mathcal{UM}(Z,\rho_0)$ (see  \cite[Lemma 2.7]{biswas2024quasi}). The restriction of the isometry $i_{\rho_0}$ to $\mathcal{M}(Z,\rho_0)$ defines an isometric embedding \begin{equation*}
		i_{\rho_0} \colon \MM(Z,\rho_0)\hookrightarrow(C(Z),\|\cdot\|_\infty)
	\end{equation*} onto a closed subset of $C(Z)$. For $Z$ finite set of cardinality $n$,  $\MM(Z,\rho_0)$ embeds isometrically into $(\R^n,\|\cdot\|_\infty)$.
	\medskip
	
	A \textit{geodesic} in a metric space $(X, d)$ is a path $\gamma \colon I \subseteq \mathbb{R} \to X$ that is an isometric embedding of an interval $I$ into $X$. We say that a metric space $X$ is \textit{geodesic} if any two points $a,b\in X$ can be connected by a geodesic segment, i.e. there exists a geodesic $\gamma\colon [0,d(a,b)] \to X$, with $\gamma(0)=a$ and $\gamma(d(a,b))=b$. Furthermore, a geodesic metric space is called \textit{geodesically complete}, if every geodesic $\gamma \colon I\subset \R\to X$ can be extended to a bi-infinite geodesic $\tilde{\gamma}\colon \R\to X$.
	
	\medskip 
	
	We recall the following properties of $\MM(Z,\rho_0)$ from  \cite{biswas2024quasi} -
	
	\begin{subprop}[Biswas, \cite{biswas2024quasi}]\label{good}
		The space $\mathcal{M}(Z,\rho_0)$ is 
		\medskip
		
		(1) unbounded
		\medskip
		
		(2) contractible 
		\medskip
		
		(3) geodesic
		\medskip
		
		(4) geodesically complete
	\end{subprop}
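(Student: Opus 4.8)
The plan is to work inside $(C(Z),\|\cdot\|_\infty)$ through the isometry $i_{\rho_0}$ and to encode $\mathcal M(Z,\rho_0)$ as the fixed-point set of one nonlinear operator. Write $g(\xi,\eta):=-2\log\rho_0(\xi,\eta)\in[0,\infty]$ (finite off the diagonal since $\rho_0\le 1$). Under $i_{\rho_0}$ a function $\tau\in C(Z)$ is the separating function $E_{\rho_0}(\tau)$ with $\log E_{\rho_0}(\tau)(\xi,\eta)=\tfrac12(\tau(\xi)+\tau(\eta))-\tfrac12 g(\xi,\eta)$; a direct check shows $E_{\rho_0}(\tau)$ has diameter $\le 1$ iff $\tau(\xi)+\tau(\eta)\le g(\xi,\eta)$ for all $\xi\ne\eta$, and is an antipodal function iff moreover for every $\xi$ equality holds for some $\eta$. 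Put
$$\Phi\tau(\xi):=\inf_{\eta\ne\xi}\bigl(g(\xi,\eta)-\tau(\eta)\bigr),\qquad P:=\{\tau\in C(Z):\tau\le\Phi\tau\},\qquad T:=\tfrac12(\operatorname{id}+\Phi).$$
Then $P$ is exactly $i_{\rho_0}(\{\rho\in\mathcal{UM}(Z,\rho_0):\operatorname{diam}\rho\le 1\})$ and is convex, $\Phi\tau$ is continuous for every $\tau$ (upper semicontinuity is clear; for lower semicontinuity a minimising $\eta$ either stays away from $\xi$ or forces $g\to\infty$), and $i_{\rho_0}(\mathcal M(Z,\rho_0))=\operatorname{Fix}(\Phi)=\operatorname{Fix}(T)$, which is nonempty because $\rho_0$ itself is antipodal, so $0\in\operatorname{Fix}(T)$. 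From the definitions one gets: $T(C(Z))\subseteq P$ (bound each infimum by the competitors $\eta$ and $\xi$), $T$ commutes with adding constants, $T$ is $1$-Lipschitz (since $\Phi$ is), and $T\tau\ge\tau$ on $P$.

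\emph{Contractibility (2).} For $\sigma\in P$ the iterates $T^k\sigma$ increase and are uniformly bounded (each lies in $P$, so $T^k\sigma(\xi)\le g(\xi,\bar\xi)-T^k\sigma(\bar\xi)\le\|\sigma\|_\infty$ for a $\rho_0$-antipode $\bar\xi$ of $\xi$, while $T^k\sigma\ge\sigma$); an equicontinuity estimate (the infimum defining $\Phi\tau$ is attained where $\rho_0$ is bounded below, hence where $g$ is Lipschitz) together with Arzel\`a--Ascoli makes the convergence uniform, so $T^k\sigma\to R(\sigma)\in C(Z)$ with $TR(\sigma)=R(\sigma)$, i.e. $R(\sigma)\in\mathcal M(Z,\rho_0)$. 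Thus $R\colon P\to\mathcal M(Z,\rho_0)$ is a continuous (indeed $1$-Lipschitz, by passing to the limit in $\|T^k\sigma-T^k\sigma'\|\le\|\sigma-\sigma'\|$) retraction, and $\mathcal M(Z,\rho_0)=R(P)$ is a continuous retract of the convex, hence contractible, set $P$; therefore it is contractible.

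\emph{Geodesicity (3).} Given $p,q\in\mathcal M(Z,\rho_0)$ with $d:=\|p-q\|_\infty$, the geometric mean $\tau^g:=\tfrac12(p+q)$ lies in $P$ (average the two defining inequalities) and is the $C(Z)$-midpoint of $p,q$. Iterate $T$ from $\tau^g$; using, for each $\xi$, the $\rho_p$-antipode and the $\rho_q$-antipode of $\xi$ as competitors in $\Phi$, one proves inductively that $p(\xi)-\tfrac d2\le T^k\tau^g(\xi)\le p(\xi)+\tfrac d2$, and symmetrically with $q$. Hence $m:=R(\tau^g)\in\mathcal M(Z,\rho_0)$ satisfies $\|m-p\|_\infty\le\tfrac d2$ and $\|m-q\|_\infty\le\tfrac d2$, which the triangle inequality forces to be equalities: $m$ is a genuine midpoint. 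Since $\mathcal M(Z,\rho_0)$ is proper, hence complete, a metric space admitting midpoints is geodesic; iterating the midpoint construction on dyadic parameters and completing produces the geodesics (and a continuous geodesic bicombing).

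\emph{Unboundedness (1) and geodesic completeness (4).} These come from the rays of $\mathcal M(Z,\rho_0)$ toward the points of $Z$. When $Z$ is finite (the situation of this paper) one writes one down: for $\zeta\in Z$ and $t$ large set $\sigma_t(\zeta)=t$ and $\sigma_t(\xi)=g(\xi,\zeta)-t$ for $\xi\ne\zeta$; then $\sigma_t\in\mathcal M(Z,\rho_0)$ (every point has $\zeta$ as a $\rho_{\sigma_t}$-antipode) and $\|\sigma_s-\sigma_t\|_\infty=|s-t|$, so $t\mapsto\sigma_t$ is a unit-speed geodesic ray; for general $Z$ one uses the Busemann cocycle of $\rho_0$ at $\zeta$ coming from the cross-ratio, or applies $R$ to a capped version of $g(\cdot,\zeta)$. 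A single such ray gives (1). For (4) one shows that every geodesic ray of $\mathcal M(Z,\rho_0)$ is asymptotic to one of these $\sigma^\zeta_\bullet$, that from every point there is a ray to every $\zeta\in Z$, and hence that any geodesic segment prolongs: a bounded amount through the compact core of $\mathcal M(Z,\rho_0)$ and then indefinitely along a ray. I expect (4) to be the hard part: prolongation inside the core and pairing a segment with the correct outgoing ray need the finer structure of $\mathcal M(Z,\rho_0)$ at infinity (the Gromov boundary being $Z$, together with the cross-ratio/Busemann apparatus of \cite{biswas2024quasi}), whereas (1)--(3) follow fairly mechanically from the operator $T$ and the retraction $R$.
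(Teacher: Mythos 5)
First, a point of reference: the paper does not prove \Cref{good} at all --- it is imported verbatim from \cite{biswas2024quasi} --- so there is no in-paper argument to compare yours against, and I can only judge the proposal on its own terms. Your operator $T=\tfrac12(\mathrm{id}+\Phi)$ is precisely the Moebius-space analogue of the retraction $\tfrac12(\mathrm{id}+q)$ used by Dress and Lang to retract $\Delta(X)$ onto the injective hull $E(X)$, which is an apt choice given that $\mathcal{M}(Z,\rho_0)$ is later identified with an injective hull. Parts (2) and (3) do come out of this correctly: $P$ is convex, $T$ is $1$-Lipschitz, maps $C(Z)$ into $P$, increases on $P$, and the monotone bounded iteration converges to a fixed point; note that your midpoint estimate follows in one line from $T^kp=p$ and $1$-Lipschitzness of $T$, with no need for the inductive antipode argument. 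Two repairs are needed even here: the equicontinuity step should invoke uniform continuity of $g$ on the compact set $\{\rho_0\ge c\}$ where the near-minimizers of $\Phi\tau$ are forced to live (``Lipschitz'' is not meaningful without fixing a metric on $Z$); and your explicit ray $\sigma_t$ for (1) requires $\sup_{\xi,\eta}\bigl(g(\xi,\zeta)+g(\eta,\zeta)-g(\xi,\eta)\bigr)<\infty$, which holds for finite $Z$ but not for a general antipodal space, whereas \Cref{good} is stated in full generality --- the ``capped version of $g(\cdot,\zeta)$'' would actually have to be carried out.

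The genuine gap is (4), and you flag it yourself. No argument is given for prolonging an arbitrary geodesic segment beyond its endpoint. The sketch offered --- every ray is asymptotic to some $\sigma^\zeta_\bullet$, from every point there is a ray to every $\zeta\in Z$, hence a segment prolongs ``through the compact core and then along a ray'' --- presupposes the decomposition of $\mathcal{M}(Z,\rho_0)$ into a compact part with $n$ attached half-lines, which in this paper is the content of \Cref{1st Thm} and sits downstream of \Cref{good} (the boundary identification in \Cref{gromov ip} already uses that $\mathcal{M}(Z,\rho_0)$ is a good, hence geodesically complete, Gromov hyperbolic space). Even granting that structure, the essential step is local: at a point $\rho$ of the compact core one must produce an outgoing unit direction continuing a given incoming geodesic, and this is exactly where the tangent-space description $T_\rho\mathcal{M}(Z,\rho_0)=\OO_\rho(Z)$ by $\rho$-odd functions (\Cref{tangent}) enters in \cite{biswas2024quasi}. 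Some input of that kind is unavoidable, and the proposal does not supply it; as written, (1)--(3) are essentially proved and (4) is not.
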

	
	\medskip
	
	\subsection{Gromov hyperbolicity and the Moebius spaces}\label{Gromov ip}\hfill\\
	Here, we briefly review the essential preliminaries of Gromov hyperbolic spaces. For a more detailed exposition, we refer the reader to \cite[Chapter III]{bridson1999metric} and \cite{buyalo2007elements}. Following this, we will discuss the connection between Gromov hyperbolicity and Moebius spaces.
	
	
	Given a metric space $X$, for any point $x\in X$ we define the Gromov product based at $x$,
	\begin{equation*}
		(y|z)_x\coloneqq \frac{1}{2}(d(y,x)+d(z,x)-d(y,z)).
	\end{equation*}
	A geodesic metric space $X$ is called {\it Gromov hyperbolic} if there exists $\delta \ge 0$ such that every triangle in $X$ is $\delta$-slim (in the terminology of \cite{bridson1999metric}), meaning that each side of the triangle is contained within the $\delta$-neighborhood of the union of the other two sides. Equivalently, a geodesic metric space $X$ is {\it Gromov hyperbolic} if there exists $\delta \ge 0$ such that for any base point $x \in X$ and for all $w, y, z \in X$, the {\it $\delta$-inequality} holds:
	\begin{equation*}
		(y|z)_x \ge \min\{(x|w)_x, (w|y)_x\} - \delta.
	\end{equation*}
	
	Given a proper Gromov hyperbolic space $X$, the Gromov boundary $\partial X$ is defined as equivalence classes of geodesic rays, $\partial X\coloneqq \{\ [\gamma]\ |\ \gamma \colon [0,+\infty)\to X, \hbox{ geodesic ray}\ \}$, where two geodesic rays $\gamma_1$ and $\gamma_2$ are said to be equivalent if and only if the set $\{\ d(\gamma_1(t),\gamma_2(t))\ |\ t\ge 0\}$ is bounded. Consider the disjoint union $\overline{X} \coloneqq X \cup \partial X$, equipped with the \textit{cone topology}\footnote{For a precise definition of cone topology, see \cite[Chapter III.H.3.5]{bridson1999metric}.}, which provides a Hausdorff compactification of $X$ known as the \textit{Gromov compactification}. Given any $x \in X$ and $\xi \in \partial X$, there exists a geodesic ray $\gamma \colon [0, +\infty) \to X$ starting from $x$ and converging to $\xi$, i.e., $\gamma(0) = x$ and $\gamma(t) \to \xi$ in $\overline{X}$ as $t \to +\infty$. Additionally, for distinct points $\xi, \eta \in \partial X$, there exists a bi-infinite geodesic $\gamma \colon \mathbb{R} \to X$ joining $\xi$ to $\eta$, i.e., $\gamma(-t) \to \xi$ and $\gamma(t) \to \eta$ in $\overline{X}$ as $t \to +\infty$. 
	
	Fix any base point $x\in X$, a Gromov hyperbolic space. In general the Gromov product function $(\cdot|\cdot)_x\colon X\times X\to [0,+\infty)$, doesn't extend to a continuous function $(\cdot|\cdot)_x\colon \overline X\times \overline X\to [0,+\infty]$, if it does, we say $X$ is a {\it boundary continuous} (Gromov hyperbolic) space. Proper CAT$(-1)$ spaces are boundary continuous (\cite[Proposition 3.4.2.]{buyalo2007elements}). Biswas showed that proper, geodesically complete CAT$(0)$ Gromov hyperbolic spaces form a large class of examples for boundary continuous Gromov hyperbolic spaces (\cite[Proposition 5.15]{biswas2024quasi}). For Gromov hyperbolic spaces, in general Gromov product is defined for $\xi, \eta \in \partial X$ as
	\begin{equation*}
		(\xi|\eta)_x \coloneqq \sup \limsup_{y_n\to \xi, z_n\to \eta} (y_n|z_n)_x \ \in [0, +\infty]
	\end{equation*}
	supremum taken over all sequences $\{y_n\}$, $\{z_n\}$ in $X$ converging to $\xi$ and $\eta$ respectively in $\overline{X}$ (and $(\xi|\eta)_x=+\infty$ if and only if $\xi = \eta$). 
	
	Now we shall see the following fact about Gromov hyperbolic space which will be useful.
	
	\begin{sublemma}\label{bddlema}
		Let $X$ be a $\delta$-hyperbolic space. Let $\xi,\xi',\eta,\eta'$ be four distinct elements of $\partial X$. Define $B(\xi,\eta), B(\xi',\eta')\subset X$ where 
		\begin{equation*}
			\begin{split}
				&B(\xi,\eta)\coloneqq\{x\in X\ |\ x \text{ is on some bi-infinite geodesic joining }\xi\text{ to }\eta\ \}\\
				&B(\xi',\eta')\coloneqq\{x\in X\ |\ x \text{ is on some bi-infinite geodesic joining }\xi'\text{ to }\eta'\ \}
			\end{split}
		\end{equation*}
		Then $B(\xi,\eta)\cap B(\xi',\eta')$ is bounded.
	\end{sublemma}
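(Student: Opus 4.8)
The plan is to reduce the statement to two elementary facts about a $\delta$-hyperbolic space: (i) any two bi-infinite geodesics having the same unordered pair of endpoints in $\partial X$ lie at Hausdorff distance at most $D_\delta$ from each other, for a constant $D_\delta$ depending only on $\delta$ (see \cite[Chapter III.H]{bridson1999metric}); and (ii) if $\gamma\colon\mathbb R\to X$ is a geodesic line with $\gamma(-\infty)=\xi$, $\gamma(+\infty)=\eta$, and if $(x_k)$ is a sequence in $X$ with $d\big(x_k,\gamma(\mathbb R)\big)\le C$ and $d(o,x_k)\to\infty$ for some (equivalently, any) basepoint $o\in X$, then some subsequence of $(x_k)$ converges in $\overline X$ to $\xi$ or to $\eta$. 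Fact (ii) I would prove along the way.

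First I would dispose of the trivial case: if $B(\xi,\eta)=\emptyset$ or $B(\xi',\eta')=\emptyset$ there is nothing to show, so assume both are nonempty and fix bi-infinite geodesics $\gamma\subseteq B(\xi,\eta)$ joining $\xi$ to $\eta$ and $\gamma'\subseteq B(\xi',\eta')$ joining $\xi'$ to $\eta'$. By (i), every bi-infinite geodesic joining $\xi$ to $\eta$ is contained in the $D_\delta$-neighbourhood $\mathcal N_{D_\delta}(\gamma)$ of $\gamma$, hence $B(\xi,\eta)\subseteq \mathcal N_{D_\delta}(\gamma)$, and likewise $B(\xi',\eta')\subseteq \mathcal N_{D_\delta}(\gamma')$. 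It therefore suffices to prove that $\mathcal N_{D_\delta}(\gamma)\cap \mathcal N_{D_\delta}(\gamma')$ is bounded.

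For (ii), I would pick a nearest point $\gamma(t_k)$ of $\gamma$ to $x_k$ (up to an additive $1$, since $\gamma(\mathbb R)$ need not attain the infimum); from $d(o,\gamma(t))\ge |t|-d(o,\gamma(0))$ and $d(o,\gamma(t_k))\ge d(o,x_k)-C-1\to\infty$ one gets $|t_k|\to\infty$, so along a subsequence $t_k\to+\infty$ (or $-\infty$), whence $\gamma(t_k)\to\eta$ (or $\xi$) in $\overline X$, i.e. $(\gamma(t_k)\,|\,\eta)_o\to\infty$; then $(x_k\,|\,\eta)_o\ge(\gamma(t_k)\,|\,\eta)_o-d(x_k,\gamma(t_k))$, which tends to $\infty$, so $x_k\to\eta$. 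Now suppose $\mathcal N_{D_\delta}(\gamma)\cap \mathcal N_{D_\delta}(\gamma')$ were unbounded; choose $p_k$ in it with $d(o,p_k)\to\infty$. Applying (ii) to $\gamma$ gives a subsequence with $p_k\to\xi$ or $p_k\to\eta$, and applying (ii) to $\gamma'$ along a further subsequence gives $p_k\to\xi'$ or $p_k\to\eta'$. Since $\overline X$ is Hausdorff the limit of a convergent sequence is unique, contradicting the hypothesis that $\xi,\xi',\eta,\eta'$ are four distinct points. Hence the intersection is bounded, and so is $B(\xi,\eta)\cap B(\xi',\eta')$.

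The main obstacle — and the only genuinely non-routine input — is fact (i), the uniform fellow-travelling of geodesics with common endpoints at infinity; it is exactly what licenses replacing the a priori unwieldy set $B(\xi,\eta)$ by a neighbourhood of a single fixed line $\gamma$. This replacement is also the step to be careful about: without fixing $\gamma$, a point of $B(\xi,\eta)$ that is far from $o$ need not be close to $\xi$ or $\eta$, since it could lie near a far-off arc of some other $\xi\eta$-geodesic; once $\gamma$ is fixed, ``far from $o$ along $\gamma$'' means ``large parameter $|t|$'', which does force proximity to an endpoint. Everything else is the triangle inequality together with the basic estimate $|(x\,|\,\zeta)_o-(y\,|\,\zeta)_o|\le d(x,y)$ for $\zeta\in\partial X$ and the characterization of convergence to a boundary point via Gromov products.
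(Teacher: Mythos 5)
Your proof is correct. It shares the paper's key reduction --- both arguments begin by invoking the uniform fellow-travelling of bi-infinite geodesics with the same endpoints at infinity (the paper uses the constant $4\delta$) to replace the unwieldy sets $B(\xi,\eta)$, $B(\xi',\eta')$ by bounded neighbourhoods of two fixed lines $\gamma$, $\gamma'$ --- but from that point on the two arguments diverge. The paper's proof is direct and quantitative: it fixes a point $x_0$ in the intersection, takes $\gamma_1,\gamma_2$ through $x_0$, and uses the monotonicity of Gromov products along geodesics together with the finiteness of $(\xi|\xi')_{x_0},(\xi|\eta')_{x_0},(\xi'|\eta)_{x_0},(\xi'|\eta')_{x_0}$ to produce an explicit bound $2M+9\delta$ on the distance from $x_0$ to any other point of the intersection, where $M$ is the maximum of those four products plus $2\delta$. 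Your proof is qualitative and by contradiction: an unbounded sequence in $\mathcal N_{D_\delta}(\gamma)\cap\mathcal N_{D_\delta}(\gamma')$ would subconverge simultaneously to an endpoint of $\gamma$ and to an endpoint of $\gamma'$, violating uniqueness of limits in the Hausdorff space $\overline X$ and the distinctness of the four boundary points. What the paper's version buys is an effective diameter bound in terms of the cross Gromov products (which is in the spirit of the rest of the article, where such products reappear as $-\log\rho_0(\xi,\eta)$); what yours buys is a cleaner logical structure that needs only the characterization of convergence to a boundary point via Gromov products and the $1$-Lipschitz estimate $|(x|\zeta)_o-(y|\zeta)_o|\le d(x,y)$. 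Both are complete; your fact (ii) is proved carefully and correctly, including the point that fixing a single geodesic $\gamma$ is what turns ``far from the basepoint'' into ``large parameter $|t|$'' and hence into proximity to an endpoint.
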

	\begin{proof}
		Let $x_0\in B(\xi,\eta)\cap B(\xi',\eta')$. Let $\gamma_1$ be a bi-infinite geodesic joining $\xi$ and $\eta$ which contains $x_0$ and $\gamma_2$ a bi-infinite geodesic joining $\xi'$ and $\eta'$ which contains $x_0$. In a $\delta$-hyperbolic space, any two bi-infinite geodesics joining the same pair of points in the boundary are contained in the $4\delta$-neighborhood of each other.  Thus, if $x\in B(\xi,\eta)\cap B(\xi',\eta')$ then there exist points $a\in \gamma_1$ and $b\in \gamma_2$ such that $|xa|,|xb|<4\delta$. Since $\xi,\xi',\eta,\eta'$ are all distinct the Gromov products $(\xi|\xi')_{x_0},(\xi|\eta')_{x_0},(\xi'|\eta)_{x_0},(\xi'|\eta')_{x_0}$ are all finite. Now, Gromov products are increasing along geodesics, from which it is straightforward to see that
		\begin{align*}
			(a|b)_{x_0}&\le M=\max\{(\xi|\xi')_{x_0},(\xi|\eta')_{x_0},(\xi'|\eta)_{x_0},(\xi'|\eta')_{x_0}\}+2\delta.
		\end{align*}
		This implies,
		\begin{align*}
			d(a,x_0)+d(b,x_0)&\le 2M+d(a,b)\\
			&\le 2M + 8\delta
		\end{align*}
		And hence $|xx_0|\le|xa|+|ax_0|\le 2M+9\delta$, thus $B(\xi,\eta)\cap B(\xi',\eta')$ is bounded.
	\end{proof}
	
	Using the Gromov product on the boundary $\partial X$ with respect to a base point $x\in X$, we define a visual quasi-metric $\rho_x(\cdot, \cdot) \coloneq e^{-(\cdot|\cdot)_x}$ on $\partial X$. In the case of proper, boundary-continuous Gromov hyperbolic spaces $X$, the visual quasi-metrics $\rho_x$ on $\partial X$, for any base point $x \in X$, are Moebius equivalent and define a canonical Moebius structure on $\partial X$ (see \cite[Section 5.3]{buyalo2007elements}). To simplify, we refer to a Gromov hyperbolic space $X$ as ``good'' if it is proper, boundary continuous, geodesic, and geodesically complete.
	For a good Gromov hyperbolic space $X$, the visual quasi-metric spaces $(\partial X,\rho_x)$ are antipodal spaces. Examples of good Gromov hyperbolic spaces include proper, geodesically complete CAT$(-1)$ spaces. Given an isometric embedding $F\colon Y\to X$ between two good Gromov hyperbolic spaces, it extends to a map $\tilde{\Phi}\colon \overline{Y}\to \overline{X}$ between the Gromov compactifications, with $\tilde{\Phi}(\partial Y)\subseteq \partial X$ and the boundary map $$\phi\coloneqq \tilde{\Phi}|_{\partial Y}\colon \partial Y \to \partial X$$ is Moebius map, i.e. $\phi$ preserve cross ratios. If $\Phi$ is an isometry then $\phi$ is a Moebius homeomorphism (cf. \cite[Lemma 5.7]{biswas2024quasi}).

	The following theorem provides a characterization of when Moebius spaces are Gromov hyperbolic.
	\begin{subtheorem}[Biswas, \cite{biswas2024quasi}]
		Let $(Z,\rho_0)$ be an antipodal space. $(Z,\rho_0)$ is a quasi-metric space if and only if $\MM(Z,\rho_0)$ is a Gromov hyperbolic space. 
	\end{subtheorem}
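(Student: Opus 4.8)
The plan is to prove the two implications separately. Both rest on the principle, implicit in the construction of \cite{biswas2024quasi}, that $\MM(Z,\rho_0)$ is a hyperbolic-filling-type completion of $(Z,\rho_0)$ — morally, the ``injective hull'' of the boundary — so that Gromov hyperbolicity of the total space should be equivalent to a $\delta$-ultrametric inequality for the function $G(\xi,\eta):=-\log\rho_0(\xi,\eta)$ on $Z$, which in turn is exactly the statement that $\rho_0$ is a quasi-metric. Throughout I will work inside $(C(Z),\|\cdot\|_\infty)$ via the isometric embedding $i_{\rho_0}$, writing $\tau_\rho:=\log\frac{d\rho}{d\rho_0}$ and $h_\xi(\rho):=-\tau_\rho(\xi)$.

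\emph{Forward direction ($\rho_0$ a $K$-quasi-metric $\Rightarrow$ $\MM(Z,\rho_0)$ Gromov hyperbolic).} First I would prove a distance formula: for every $\rho\in\MM(Z,\rho_0)$,
$$d_{\mathcal M}(\rho_0,\rho)=\max_{\xi\in Z}\tau_\rho(\xi)=-\min_{\xi\in Z}\tau_\rho(\xi).$$
This comes from playing $\diam\rho=1$ and antipodality of $\rho$ off against the same two properties of $\rho_0$: if $\xi$ minimises $\tau_\rho$, choosing $\eta$ with $\rho(\xi,\eta)=1$ bounds $\max\tau_\rho$ from below, and if $\xi$ maximises $\tau_\rho$, choosing $\eta$ with $\rho_0(\xi,\eta)=1$ bounds $\min\tau_\rho$ from above. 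Re-basing at $\rho_1$ this gives $d_{\mathcal M}(\rho_1,\rho_2)=\max_\xi\big(\tau_{\rho_1}(\xi)-\tau_{\rho_2}(\xi)\big)$ for all $\rho_1,\rho_2$. Second, I would record the horofunction identity: the two defining conditions of an antipodal function say precisely that $h_\xi(\rho)=\sup_{\eta\in Z}\big(2\log\rho_0(\xi,\eta)-h_\eta(\rho)\big)$ for every $\rho$ and $\xi$. Third, I would feed these facts, together with the quasi-metric inequality for $\rho_0$ written as $G(\xi,\zeta)\ge\min\{G(\xi,\eta),G(\eta,\zeta)\}-\log K$, into the four-point reformulation of hyperbolicity $(\rho_1|\rho_3)_{\rho_0}\ge\min\{(\rho_1|\rho_2)_{\rho_0},(\rho_2|\rho_3)_{\rho_0}\}-\delta$, extracting $\delta=\delta(K)$. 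Here one uses the explicit geodesics from \Cref{good} — bi-infinite geodesics in $\MM(Z,\rho_0)$ with endpoints at $\rho_0$-antipodal pairs in $Z$ — to reduce the estimate on $\MM(Z,\rho_0)$ to the $\delta$-ultrametric inequality on $Z$; the case $K=1$ should recover that $\MM(Z,\rho_0)$ is a tree.

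\emph{Converse ($\MM(Z,\rho_0)$ Gromov hyperbolic $\Rightarrow$ $\rho_0$ a quasi-metric).} By \Cref{good}, $\MM:=\MM(Z,\rho_0)$ is proper, geodesic and geodesically complete, so (assuming it is $\delta$-hyperbolic) its Gromov boundary $\partial\MM$ carries a visual quasi-metric $\rho_{\rho_0}=e^{-(\cdot|\cdot)_{\rho_0}}$, and this $\rho_{\rho_0}$ is a genuine quasi-metric since the $\delta$-inequality for Gromov products propagates to the boundary of a proper geodesic hyperbolic space. It then suffices to produce a Moebius (a fortiori quasi-Moebius) homeomorphism $(Z,\rho_0)\xrightarrow{\ \sim\ }(\partial\MM,\rho_{\rho_0})$ carrying $\rho_0$ to $\rho_{\rho_0}$: then $\rho_0$, being quasi-Moebius equivalent to a quasi-metric, is itself a quasi-metric. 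The map sends $\xi\in Z$ to the endpoint of the geodesic ray from $\rho_0$ along which $\tau_{\rho_t}$ grows linearly and is concentrated at $\xi$ (the ray through antipodal functions that ``blow up at $\xi$''); one checks it is well defined, injective (distinct $\xi$ give divergent rays), surjective (geodesic completeness plus properness of $\MM$), a homeomorphism for the cone topology, and that $(\xi|\eta)_{\rho_0}=-\log\rho_0(\xi,\eta)$ by a direct computation with the bi-infinite geodesic from $\xi$ to $\eta$.

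\emph{Main obstacle.} The hard part is the third step of the forward direction: upgrading the pointwise antipodal constraints on $Z$ and the quasi-metric constant $K$ to a uniform $\delta$ for the sup-metric on $\MM(Z,\rho_0)$. The difficulty is intrinsic, since $(C(Z),\|\cdot\|_\infty)$ is not hyperbolic, so the ambient $\ell^\infty$-geometry is useless by itself and one must genuinely exploit the horofunction identity that cuts $\MM(Z,\rho_0)$ out of $C(Z)$; concretely, the shape of geodesics in $\MM(Z,\rho_0)$ — whose existence is \Cref{good} but whose endpoint-at-a-$\rho_0$-antipodal-pair description, together with a case analysis on which of the four points sits far from the relevant geodesic, must be carried out explicitly. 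In the converse the only delicate point is the on-the-nose identification of $Z$ with $\partial\MM$ as Moebius spaces, but for the present statement the softer quasi-Moebius identification already suffices.
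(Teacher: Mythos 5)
This theorem is quoted in the paper from \cite{biswas2024quasi} without proof, so there is no in-paper argument to compare your attempt against; I can only assess the outline on its own terms. Your two preliminary lemmas are correct and are the right starting point: the identity $d_{\mathcal{M}}(\rho_0,\rho)=\max_\xi\tau_\rho(\xi)=-\min_\xi\tau_\rho(\xi)$ follows exactly as you say by playing the antipodality and diameter-one conditions of $\rho$ and of $\rho_0$ against each other, and the ``horofunction identity'' $-\tau_\rho(\xi)=\sup_\eta\bigl(\tau_\rho(\eta)+2\log\rho_0(\xi,\eta)\bigr)$ is a faithful restatement of the two defining conditions of an antipodal function. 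The converse direction is also essentially sound, with one caveat: you may not simply invoke the identification of $\partial\MM(Z,\rho_0)$ with $(Z,\rho_0)$ and the formula $(\xi|\eta)_{\rho_0}=-\log\rho_0(\xi,\eta)$ as recalled in \Cref{gromov ip}, because there (and in the source) these are established under the hypothesis that $\rho_0$ is a quasi-metric, which is what you are trying to prove; the boundary identification must be rederived assuming only that $\MM(Z,\rho_0)$ is hyperbolic. Your ray construction can do this, but it is genuine work, not a citation.

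The genuine gap is the one you flag yourself: the forward direction's extraction of a uniform $\delta=\delta(K)$ for the four-point condition. As written this is a plan, not a proof, and it has two concrete problems. First, the four-point inequality must be verified for \emph{arbitrary} triples $\rho_1,\rho_2,\rho_3\in\MM(Z,\rho_0)$, and it is not explained how ``reducing to bi-infinite geodesics with endpoints at $\rho_0$-antipodal pairs'' reaches a general triple. Second, the mechanism by which $K$ actually enters is absent. The workable version of your plan is a Gromov-product formula of the shape $\bigl|(\rho_1|\rho_2)_{\rho_0}+\tfrac12\min_\xi(\tau_{\rho_1}(\xi)+\tau_{\rho_2}(\xi))\bigr|\le C(K)$: the inequality $(\rho_1|\rho_2)_{\rho_0}\le-\tfrac12\min_\xi(\tau_{\rho_1}+\tau_{\rho_2})$ already follows from your distance formula with no constant at all (average the two lower bounds $d(\rho_1,\rho_2)\ge\tau_{\rho_2}(\xi_1)-\tau_{\rho_1}(\xi_1)$ and $d(\rho_1,\rho_2)\ge\tau_{\rho_1}(\xi_2)-\tau_{\rho_2}(\xi_2)$ at the respective minimizers $\xi_1,\xi_2$), but the reverse inequality is precisely where the quasi-metric inequality must be combined with the horofunction identity, and afterwards one still has to verify the four-point condition for the model quantity $-\tfrac12\min_\xi(\tau_{\rho_1}+\tau_{\rho_2})$ itself using the $\delta$-ultrametric inequality for $-\log\rho_0$. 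Until that estimate is carried out, the forward implication --- the substantive half of the theorem --- is not proved.
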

	\noindent If $(Z,\rho_0)$ is an antipodal space with finite elements, then $\rho_0$ is a quasi-metric antipodal space and hence $\MM(Z,\rho_0)$ is a Gromov hyperbolic space (with boundary identified with $Z$). 
	Next, we make the following crucial remark (see \cite[Sections 5 and 6]{biswas2024quasi} for detailed discussion).
	
	\begin{subrmk}\label{gromov ip}
		\noindent For $(Z,\rho_0)$ a quasi-metric antipopdal space, $\MM(Z,\rho_0)$ is a good Gromov hyperbolic space (see \Cref{good}).  The Gromov boundary $\partial \MM(Z,\rho_0)$ is identified with $Z$, and $\MM(Z,\rho_0)$ is  boundary continuous. Furthermore, under this identification for distinct $\xi,\eta\in Z=\partial\MM(Z)$ we have 
		\begin{equation*}
			\exp(-(\xi|\eta)_{\rho_0})=\lim_{\substack{\alpha\to\xi \\ \beta\to \eta}}\ \exp(-(\alpha|\beta)_{\rho_0})\ =\ \rho_0(\xi,\eta),
		\end{equation*}
		where $\alpha,\beta\in \MM(Z,\rho_0)$ (see \cite[Theorem 6.3]{biswas2024quasi}). Therefore, the Gromov boundary $\partial \MM(Z,\rho_0)$ with visual quasi-metric based at $\rho_0$ is identified with $(Z,\rho_0)$. 
	\end{subrmk}
	
	Additionally, the Moebius space $\MM(Z,\rho_0)$ associated to any quasi-metric antipodal space is a {\bf maximal Gromov hyperbolic spaces} in the sense that: if a good Gromov hyperbolic $X$ admits a Moebius homeomorphism $\phi\colon \partial X\to (Z,\rho_0)$ (i.e. $X$ is a hyperbolic filling of $(Z,\rho_0)$), then there exists an isometric embedding $\Phi\colon X\to \MM(Z,\rho_0)$ (see \cite[Theorem 1.2, 1.3]{biswas2024quasi}) such that the boundary map of $\Phi$ agrees with $\phi$. It is the unique maximal among all hyperbolic fillings of $(Z,\rho_0)$ as discussed in \Cref{intro}. Furthermore, any maximal Gromov hyperbolic space $X$ is isometric to the Moebius space $\MM(\partial X,\rho_x)$, where $(\partial X,\rho_x)$ is a visual antipodal space for any $x\in X$.
	
	For a detailed discussion on hyperbolic filling and maximal Gromov hyperbolic spaces, we refer the reader to \cite[Section 6]{biswas2024quasi}. We also encourage exploring the general concepts of Gromov Product spaces and maximal Gromov Product Spaces in \cite[Section 5]{biswas2024quasi}.
	
	\medskip
	
	We now recall the definition of tangent space of $\MM(Z,\rho_0)$.
	\begin{subdefinition}[{\bf Tangent space of $\MM(Z,\rho_0)$}, \cite{biswas2024quasi}]
		Let $\rho\in\MM(Z,\rho_0)$. We say that curve $t\in(-\epsilon,\epsilon)\mapsto\rho(t)\in \MM(Z,\rho_0)$ is admissible at $\rho$ if $\rho(0)=\rho$ and $\frac{d}{dt}_{|_{t=0}}i_{\rho_0}(\rho(t))\in C(Z)$ exits. The tangent space $T_\rho\MM(Z,\rho_0)$ of $\MM(Z,\rho_0)$ at $\rho$ is defined to be a subset of $C(Z)$,
		\begin{equation*}
			T_\rho\MM(Z,\rho_0)\coloneqq\biggl\{\tau=\frac{d}{dt}_{|_{t=0}}i_{\rho_0}(\rho(t))\in C(Z)\ |\ t\mapsto \rho(t)\ \text{is an admissible curve}\ \biggr\}
		\end{equation*}
	\end{subdefinition}
	\noindent From the definition, it is unclear if the tangent space $T_\rho\MM(Z,\rho_0)$ is a vector space. The following theorem makes it clear.
	
	\begin{subtheorem}[Biswas, \cite{biswas2024quasi}]\label{tangent}
		Let $\rho\in \MM(Z,\rho_0)$. A continuous function $\tau\in C(Z)$ is said to be $\rho$-odd if for all $\xi,\eta\in Z$, whenever $\rho(\xi,\eta)=1$ then $\tau(\xi)+\tau(\eta)=0$. $\OO_\rho(Z)\subset C(Z)$ denote the set of $\rho$-odd functions. The tangent space to $\MM(Z,\rho_0)$ at $\rho$ is equal to the space of $\rho$-odd functions, i.e.
		$$T_\rho\MM(Z,\rho_0)=\OO_\rho(Z).$$
		In particular $T_\rho\MM(Z,\rho_0)$ is a closed linear subspace of $C(Z)$.
	\end{subtheorem}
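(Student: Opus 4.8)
The plan is to work in the isometric linear chart $i_{\rho_0}$, but based at $\rho$ rather than $\rho_0$. Since $i_\rho = i_{\rho_0} - i_{\rho_0}(\rho)$ is a translate of $i_{\rho_0}$, and $\OO_\rho(Z)$ does not involve a base-point, it suffices to compute $T_\rho\MM(Z,\rho_0)$ using the chart $i_\rho$, under which $\rho$ corresponds to $0 \in C(Z)$. Write $h(\xi,\eta) := -2\log\rho(\xi,\eta) \in [0,+\infty]$; then $h$ is nonnegative and continuous off the diagonal, $h(\xi,\eta) \to +\infty$ as $\eta \to \xi$, $h(\xi,\eta) = 0$ exactly when $\rho(\xi,\eta) = 1$, and every $\xi$ has some $\eta$ with $h(\xi,\eta) = 0$. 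A direct computation with $E_\rho$ shows that, for $\tau \in C(Z)$, the separating function $E_\rho(\tau)$ is antipodal if and only if $\tau(\xi) + \tau(\eta) \le h(\xi,\eta)$ for all $\xi,\eta$ and, for every $\xi$, equality holds for some $\eta$; equivalently, $\tau$ is a fixed point of the order-reversing, $1$-Lipschitz map $T \colon C(Z) \to C(Z)$, $(T\tau)(\xi) := \inf_{\eta \neq \xi}\bigl(h(\xi,\eta) - \tau(\eta)\bigr)$ (the infimum is attained, since $h(\xi,\eta)-\tau(\eta)$ blows up near the diagonal). Hence $A := i_\rho(\MM(Z,\rho_0)) = \operatorname{Fix}(T)$, with $0 \in A$ as $T(0)=0$, while $\OO_\rho(Z) = \{\, v \in C(Z) : v(\xi)+v(\eta)=0 \text{ whenever } h(\xi,\eta)=0 \,\}$.

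The inclusion $T_\rho\MM(Z,\rho_0) \subseteq \OO_\rho(Z)$ needs no work: if $t \mapsto \rho(t)$ is admissible at $\rho$ with derivative $v$, set $\tau(t) := i_\rho(\rho(t)) \in A$, so $\tau(0)=0$. For $\xi,\eta$ with $h(\xi,\eta) = 0$ one has $\tau(t)(\xi)+\tau(t)(\eta) \le h(\xi,\eta) = 0$ for all small $t$; dividing by $t$ and letting $t \to 0^{+}$, then $t \to 0^{-}$, gives $v(\xi)+v(\eta) \le 0$ and $\ge 0$, so $v(\xi)+v(\eta)=0$ and $v \in \OO_\rho(Z)$. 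Moreover $\OO_\rho(Z)$ is the intersection of the kernels of the bounded linear functionals $v \mapsto v(\xi)+v(\eta)$ over the pairs with $h(\xi,\eta)=0$, hence a closed linear subspace of $C(Z)$; so once the reverse inclusion is proved, the last assertion of the theorem follows.

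For the reverse inclusion when $Z$ is \emph{finite} — the case used in this paper — given $v \in \OO_\rho(Z)$ I would take the straight path $\tau(t) := tv$. For a pair with $h(\xi,\eta)=0$ one has $\tau(t)(\xi)+\tau(t)(\eta) = t(v(\xi)+v(\eta)) = 0 = h(\xi,\eta)$ for every $t$, while for the finitely many pairs with $h(\xi,\eta)>0$ one has $|\tau(t)(\xi)+\tau(t)(\eta)| \le 2|t|\,\|v\|_\infty < h(\xi,\eta)$ once $|t|$ is small. Thus $\tau(t)$ satisfies every constraint, and for each $\xi$ the supremum defining antipodality is attained with value $0$ at any $\eta$ antipodal to $\xi$; hence $\tau(t) \in A$ for $|t|$ small and $t \mapsto E_\rho(\tau(t))$ is an admissible curve with derivative $v$. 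This proves $\OO_\rho(Z) \subseteq T_\rho\MM(Z,\rho_0)$ when $Z$ is finite.

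The substantial part is the reverse inclusion for \emph{infinite} $Z$, where the straight path leaves $A$: the constraint $t(v(\xi)+v(\eta)) \le h(\xi,\eta)$ can fail for pairs with arbitrarily small positive $h(\xi,\eta)$, so one must antipodalize $tv$. Everything reduces to the estimate $\operatorname{dist}_\infty(tv, A) = o(t)$ as $t \to 0$: granting that $\MM(Z,\rho_0)$ is injective (proved in \cite{biswas2024quasi}), one gets a $1$-Lipschitz retraction $r \colon C(Z) \to A$, and then $\tau(t) := r(tv)$ satisfies $\|\tau(t)-tv\|_\infty \le 2\operatorname{dist}_\infty(tv,A) = o(t)$, so $t \mapsto E_\rho(\tau(t))$ is admissible with derivative $v$. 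The quantitative input is that the modulus $\omega(s) := \sup\{\, |v(\xi)+v(\eta)| : h(\xi,\eta) \le s \,\}$ tends to $0$ as $s \to 0^{+}$ — immediate from compactness of $Z$, continuity of $v$, and $\rho$-oddness, since any sequence with $h(\xi_k,\eta_k) \to 0$ has a subsequence converging to an antipodal pair, on which $v$ sums to zero. This gives $\|tv - T(tv)\|_\infty \le t\,\omega(2t\|v\|_\infty) = o(t)$, and, combined with $\operatorname{id} \le T^2$ and $T^2\sigma \le \max\sigma$ (estimate $(T^2\sigma)(\xi)$ via an $\eta$ with $h(\xi,\eta)=0$), one can push $tv$ into $A$ by a monotone scheme. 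I expect the main obstacle to be precisely this last step: controlling the antipodalization of $tv$ uniformly in $C(Z)$ — equivalently, showing that the monotone iterates of $T$ converge to an honest fixed point of $T$ (a priori only of $T^{2}$) while remaining within $o(t)$ of $tv$.
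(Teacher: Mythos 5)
The paper does not actually prove this statement --- it is imported verbatim from \cite{biswas2024quasi} --- so there is no in-house argument to compare against; I can only assess your proof on its own terms. Your reduction to the chart $i_\rho$ is legitimate (the two charts differ by the constant $i_{\rho_0}(\rho)$, which does not affect derivatives of curves), your identification of $i_\rho(\MM(Z,\rho_0))$ with the constraint set $\{\tau : \tau(\xi)+\tau(\eta)\le h(\xi,\eta)$ for all $\xi,\eta$, with equality for some $\eta$ for each $\xi\}$ is correct, and the inclusion $T_\rho\MM(Z,\rho_0)\subseteq\OO_\rho(Z)$ together with the closedness of $\OO_\rho(Z)$ is proved cleanly (the one-sided difference quotients at an interior maximum of $t\mapsto\tau(t)(\xi)+\tau(t)(\eta)$). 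The reverse inclusion for finite $Z$ is also complete and correct: the straight path $t\mapsto E_\rho(tv)$ stays in $\MM(Z,\rho_0)$ for small $|t|$ because the finitely many strict constraints have a positive minimum, and equality persists on antipodal pairs by $\rho$-oddness. Since the present paper only ever applies the theorem for finite $Z$ (the tangent-space computation in \Cref{Antipodal relations and Polyhedral structure}), this covers everything the paper needs.

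The statement as quoted is, however, for an arbitrary antipodal space, and there your argument has a genuine, self-acknowledged gap: you establish $\|tv - T(tv)\|_\infty \le t\,\omega(2t\|v\|_\infty) = o(t)$, but you do not produce an actual point of $A=i_\rho(\MM(Z,\rho_0))$ within $o(t)$ of $tv$. This is the substantive content of the theorem for infinite $Z$: $T(tv)$ satisfies the inequality constraints (since $T$ is order-reversing and $T(tv)\le tv$ forces $T(tv)\le T(T(tv))$), but it need not be antipodal. What is missing is a quantitative antipodalization lemma: if $\sigma$ satisfies the constraints and $\|\sigma-T\sigma\|_\infty\le\epsilon$, then $\operatorname{dist}_\infty(\sigma,A)\le C\epsilon$. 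Note that $\sigma-T\sigma$ is precisely the discrepancy $D_\rho(\sigma)$ of \cite[Definition 3.1]{biswas2024quasi}, and a bound of this form is exactly what the construction of $\MM(Z)$ there provides (e.g.\ via the monotone Isbell--Dress-type iteration $\sigma\mapsto\tfrac12(\sigma+T\sigma)$, whose total displacement is a geometric series controlled by $\|\sigma-T\sigma\|_\infty$, plus a compactness argument showing the limit attains its defining infima and is therefore antipodal); so your route is sound, but as written the infinite case is a sketch, not a proof. Two smaller remarks: your appeal to injectivity of $\MM(Z,\rho_0)$ is superfluous even granted the distance estimate, since admissibility only requires choosing some $\tau(t)\in A$ with $\|\tau(t)-tv\|_\infty=o(t)$ and $\tau(0)=0$, with no retraction needed; and one should check that $T$ actually maps $C(Z)$ into $C(Z)$ (the infimum of the continuous family $h(\cdot,\eta)-\sigma(\eta)$ is a priori only upper semicontinuous, though the blow-up of $h$ near the diagonal localizes the minimizers and rescues continuity).
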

	
	\medskip
	
	\subsection{Polyhedral complex}
	For our purpose, we shall use the following definitions
	\begin{subdefinition}{\bf (Polyhedral cell or convex polyhedron)}
		A non-empty closed convex set $X\subset \R^n$ is called a polyhedral cell (or convex polyhedron) if it is the intersection of a finite number of closed half-spaces.
	\end{subdefinition}
	
	\noindent  If the polyhedron doesn't contain any ray, then it is bounded and called a {\it polytope}. We know that a subset $X\in \R^n$ is a polytope if and only if it is the convex hull of finitely many points in $\R^n$(see \cite[Theorem 1.1]{ziegler2012lectures}). In this article, whenever we say {\it ``polyhedron"}, we shall mean a ``convex polyhedron" (or a ``polyhedral cell"). The dimension of a polyhedral cell is the dimension of the smallest affine subspace that contains it.
	
	\begin{subdefinition}{\bf (Face of a polyhedron)}
		Let $X$ be a polyhedron. Let $H$ be a hyperplane in $\R^n$ such that $X$ is contained in one of the two closed half-spaces bounded by $H$. If $H\cap X\neq \emptyset$ then $H\cap X$ is called a face of $X$.
		
		\noindent Also, if $H\cap X\neq X$ then it is called a `proper face', and if $H\cap X$ is a singleton then it is called a `vertex'. 
	\end{subdefinition} 
	
	\noindent A face of a polyhedron is itself a polyhedral cell. For a polytope, the number of faces is finite, and the intersection of any two faces is also a face (see  \cite[Section 7.34]{bridson1999metric} and  \cite[Chapter 2]{ziegler2012lectures}). Similar arguments would show that the same is true for general polyhedral cells (i.e., they are not necessarily bounded).  Next, we recall the definition of a polyhedral complex (cf.   \cite[Definition 3.13, p. 136]{aleksandrov1998combinatorial}).
	
	\medskip
	
	\begin{subdefinition}{\bf (Polyhedral complex), \cite{ziegler2012lectures}}
		A polyhedral complex $\mathcal C$ is a set of polyhedra in $\R^n$ that satisfies the following:
		\medskip
		
		(1) $\emptyset\in \mathcal C$.
		\medskip
		
		(2) Every face of a polyhedron from $\mathcal C$ is also in $\mathcal C$.
		\medskip
		
		(3) If $P_1,P_2\in \mathcal C$ then $P_1\cap P_2$ is a face of both $P_1$ and $P_2$.
		
	\end{subdefinition}
	\noindent Note $\emptyset$ is a face of every polyhedron, and the intersection of two polyhedra in $\mathcal C$ can be empty. The {\it underlying set} of $\mathcal C$ is the point set $|\mathcal C|\coloneqq \bigcup_{P\in \mathcal C} P$.
	\bigskip
	
	\subsection{Injective metric spaces and injective hulls} Injective metric spaces are geodesic, complete, and contractible.
	
	For a metric space $(X,d)$ with at least two points, define 
	$$\Delta(X)\coloneqq\{f \colon X\to \R\ |\ f(x)+f(y)\ge d(x,y) \text{ for } x,y\in X\}$$
	A function $f \colon X\to \R$ is called {\it extremal} if it is a minimal element in the partially ordered set $(\Delta(X),\le)$, where $g\le f$ if $g(x)\le f(x)$ for every $x\in X$. Define the set of extremal functions
	\begin{align*}
		E(X) \colon &=\{f\in \Delta(X)\ |\ \text{if }g\in \Delta(X)\text{ and }g\le f,\ \text{then }g=f\}\\
		&=\{f\in \Delta(X)\ |\ f(x)=\sup_{y\in X}(d(x,y)-f(y))\text{ for } x\in X\}
	\end{align*}
	(cf. \cite[Section 3]{lang2013injective}, \cite[Section 1]{dress1984trees}). In fact, if $f\in E(X)$ and $x\in X$ then 
	\begin{equation}\label{injhull}
		f(x)=\sup_{y\in X\setminus \{x\}}d(x,y)-f(y)
	\end{equation} 
	(see \cite[Section 1]{dress1984trees}), this fact will be useful later. Extremal functions are 1-Lipschitz. $E(X)$ is equipped with the metric $(f,g)\mapsto \|f-g\|_\infty$. The map  $e \colon X\to E(X)$, $x\mapsto d_x$, (where $d_x \colon X\to \R$, $y\mapsto d(x,y)$) is an isometric embedding and $(E(X),\|\cdot\|_\infty)$ is the injective hull of $X$.
	
	\bigskip

	\section{Antipodal relations and Polyhedral structure of $\MM(Z,\rho)$}\label{Antipodal relations and Polyhedral structure}
	
	In this section, we observe that for an antipodal space $(Z, \rho_0)$ of finite cardinality, we can derive combinatorial data in the form of the {\it antipodal relation}, which we will use to obtain the polyhedral structure of the Moebius space $\mathcal{M}(Z, \rho_0)$ as discussed in \Cref{1st Thm}.

	Unless otherwise specified, from now on, $(Z,\rho_0)$ will denote a finite antipodal space with at least four points. For brevity, we will often refer to $\mathcal{M}(Z,\rho_0)$ as simply $\mathcal{M}(Z)$ in this \Cref{Antipodal relations and Polyhedral structure} and the following \Cref{Injective metric space and MZ}. Whenever we write $\mathcal{M}(Z)$, it should be understood that there exists a fixed antipodal function $\rho_0$ for which $\mathcal{M}(Z) \coloneqq \mathcal{M}(Z,\rho_0)$. Additionally, due to the identification \eqref{embedding}, we will frequently identify antipodal functions $\rho\in\MM(Z)$, with the corresponding continuous functions $\tau_{\rho} = \log\frac{d\rho}{d\rho_0}$ on $Z$.

	\medskip

	\subsection{Antipodal relations}
	
	\noindent For an antipodal space $(Z,\rho_0)$ we define the following function from $\MM(Z)$ to the powerset $\mathcal{P}(Z\times Z\setminus\Delta)$ (where $\Delta\coloneqq\{\ (\xi,\xi)\ |\ \xi\in Z\ \}$):
	\begin{align*}
		\Phi \colon \MM(Z)&\to \mathcal{P}(Z\times Z\setminus\Delta)\\
		\rho&\mapsto R_\rho\coloneqq\{(\xi,\eta)\ |\ \rho(\xi,\eta)=1\}\subseteq (Z\times Z)\setminus\Delta.
	\end{align*}
	
	\noindent $R_\rho$ is a symmetric relation on $Z$ for each $\rho\in \MM(Z)$. We shall call a symmetric relation $R\subseteq(Z\times Z)\setminus\Delta$ {\it admissible} if for every $\xi \in Z$ there exists $\eta \in Z$ such that $(\xi,\eta)\in R$. A symmetric relation $R$ on $Z$ is called {\it antipodal} if either $R={\emptyset}$ or $R\in\ $Image($\Phi$). Let $\mathcal A_Z$ denote the set of all antipodal relations on $Z$, $\mathcal A_Z=$ Image$(\Phi)\cup\{\emptyset\}$. Also note that $\{\Phi^{-1}(R)\ |\ R \in \mathcal A_Z \}$ gives a partition of $\MM(Z)$.
	\medskip
	
	\subsection{Polyhedral structures}\label{polyhedral structure}
	
	\noindent Given $(Z, \rho_0)$, a finite antipodal space with $n$ points, we label the points as $\{1, 2, \dots, n\}$. For $i \neq j$, define $a_{ij} \coloneqq \log(\rho_0(i,j)^2)$.
	Define affine linear functions $l_{ij} \colon \R^n\to \R$ for $i\neq j$ by 
	\begin{equation*}
		l_{ij}(x)=x_i+x_j+a_{ij} \text{, for } \tau\in \R^n.
	\end{equation*}  Given an admissible relation $R$ we define the following subsets in $(\R^n,\|\cdot\|_\infty)\colon$ 
	\begin{align}
		C(R)&\coloneqq\{\tau\in \R^n \ |\  l_{ij}(\tau)=0\text{ for all }(i,j)\in R \text{ and } l_{ij}(\tau)\leq 0 \text{ for all } (i,j)\not\in \Delta\ \}\\
		C(R)^*&\coloneqq\{\tau\in \R^n\ |\  l_{ij}(\tau)=0\text{ for all }(i,j)\in R \text{ and } l_{ij}(\tau)<0 \text{ for all }(i,j)\not\in R\cup \Delta\}\label{set}
	\end{align} 
	We can immediately observe that :
	\begin{enumerate}[(i)]
		\item $C(R)$ is a polyhedron
		
		\item For admissible relations $R_1,R_4$ if $R_1\subseteq R_2$ then $C(R_2)\subseteq C(R_1)$.
		
		\item $C(R)^*$ is a subset of $C(R)$ and if $C(R)^*$ is non-empty then $C(R)=\overline{C(R)^*}$, the closure of $C(R)$ in the usual topology of $\R^n$. In particular, $C(R)^*$ is the relative interior of $C(R)$, that is, the interior of $C(R)$ in the subspace topology.
	\end{enumerate}
	Now, we observe a few more properties of the collection ${C(R)}$ in the following lemmas.
	\begin{sublemma}\label{cell}
		Let $(Z,\rho_0)$ be a finite antipodal space.
		\medskip
		
		(1) Given an admissible relation $R(\neq\emptyset)$ it is antipodal if and only if $C(R)^*\neq\emptyset$.\medskip
		
		(2) The isometric embedding $i_{\rho_0} \colon \MM(Z)\to (\R^n,\|\cdot\|_\infty)$ maps $\Phi^{-1}(R)$ onto $C(R)^*$ for any admissible relation $R\neq\emptyset$.\medskip
		
		(3) If $R$ is an antipodal relation, then $R\subset R_\rho$ for all $\rho\in E_{\rho_0}(C(R))$.\medskip 
		
		(4) For $R_1,R_2$ admissible relations we have $C(R_1)\cap C(R_2)=C(R_1\cup R_2)$.
		
	\end{sublemma}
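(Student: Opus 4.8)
The plan is to treat the four parts largely in sequence, since (2) and (3) lean on (1), and (4) is essentially formal once one unwinds definitions. For part (1): recall from the definitions that for $\tau \in \mathbb{R}^n$, the function $\rho = E_{\rho_0}(\tau)$ satisfies $\rho(i,j)^2 = e^{\tau(i)+\tau(j)}\rho_0(i,j)^2$, so $\log(\rho(i,j)^2) = \tau_i + \tau_j + a_{ij} = l_{ij}(\tau)$. Thus $\rho(i,j) \le 1 \iff l_{ij}(\tau) \le 0$ and $\rho(i,j) = 1 \iff l_{ij}(\tau) = 0$. Now if $R$ is antipodal, $R = R_\rho$ for some $\rho \in \mathcal{M}(Z)$; writing $\tau = i_{\rho_0}(\rho)$, the antipodality of $\rho$ (diameter one, with each point having an antipode) translates exactly to $l_{ij}(\tau) \le 0$ for all $i \ne j$ and $l_{ij}(\tau) = 0$ precisely for $(i,j) \in R$, i.e. $\tau \in C(R)^*$. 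Conversely, if $\tau \in C(R)^* \ne \emptyset$, then $\rho := E_{\rho_0}(\tau)$ has $\sup_{i\ne j}\rho(i,j) = 1$ (attained since $R$ is admissible, hence nonempty, so some $l_{ij}(\tau) = 0$) and every point has an antipode (admissibility of $R$), so $\rho \in \mathcal{M}(Z)$ and $R_\rho = R$, whence $R \in \operatorname{Image}(\Phi)$ is antipodal.

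Part (2) is then a repackaging of the computation just made: the argument above shows $i_{\rho_0}$ carries $\Phi^{-1}(R) = \{\rho \in \mathcal{M}(Z) : R_\rho = R\}$ bijectively onto $C(R)^*$, because $\rho \in \mathcal{M}(Z)$ with $R_\rho = R$ is equivalent to $\tau = i_{\rho_0}(\rho)$ lying in $C(R)^*$ (the diameter-one and antipodal conditions on $\rho$ become exactly the inequality and equality constraints defining $C(R)^*$, once $R$ is admissible so that the constraints are consistent with $\rho$ being a genuine antipodal function). For part (3): take $\rho \in E_{\rho_0}(C(R))$ and $\tau = i_{\rho_0}(\rho) \in C(R)$; by definition of $C(R)$ we have $l_{ij}(\tau) = 0$ for all $(i,j) \in R$, hence $\rho(i,j) = 1$ for all $(i,j) \in R$, i.e. $R \subseteq R_\rho$. (One should also note $\rho$ is genuinely an antipodal function here — this uses that $R$ is antipodal so $C(R) = \overline{C(R)^*}$ by observation (iii), and antipodality of separating functions is a closed condition, as in \cite[Lemma 2.7]{biswas2024quasi}; alternatively the statement of (3) only asserts the inclusion $R \subset R_\rho$, which is immediate from $l_{ij}(\tau)=0$ on $R$.)

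Part (4): the inclusion $C(R_1 \cup R_2) \subseteq C(R_1) \cap C(R_2)$ is immediate from observation (ii) since $R_1, R_2 \subseteq R_1 \cup R_2$. For the reverse, if $\tau \in C(R_1) \cap C(R_2)$ then $l_{ij}(\tau) \le 0$ for all $(i,j) \notin \Delta$ (from either), and $l_{ij}(\tau) = 0$ for every $(i,j) \in R_1$ and every $(i,j) \in R_2$, hence for every $(i,j) \in R_1 \cup R_2$; moreover $R_1 \cup R_2$ is again a symmetric relation avoiding $\Delta$, so $\tau \in C(R_1 \cup R_2)$. I expect no serious obstacle in any of the four parts; the only point requiring care is the bookkeeping in (1) and (2) — namely checking that \emph{admissibility} of $R$ is exactly what guarantees that a point of $C(R)^*$ yields a bona fide antipodal function (diameter one \emph{and} every point antipodal), and conversely that the defining conditions of $\mathcal{M}(Z)$ unwind to precisely the linear (in)equalities cutting out $C(R)^*$. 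That translation between the multiplicative language of cross-ratios/antipodal functions and the additive language of the affine functions $l_{ij}$ via $\tau \mapsto \tau_i + \tau_j + a_{ij}$ is the conceptual heart of the lemma.
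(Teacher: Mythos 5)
Your proposal is correct and follows essentially the same route as the paper: the GMVT/\,$E_{\rho_0}$ translation of the conditions $\rho(i,j)=1$ (resp.\ $<1$) into $l_{ij}(\tau)=0$ (resp.\ $<0$), which yields (1)--(3), and the same two-inclusion argument for (4). Your extra remarks --- that admissibility of $R$ is what guarantees $E_{\rho_0}(\tau)$ is a genuine antipodal function for $\tau\in C(R)^*$, and the care taken in (3) about whether points of $E_{\rho_0}(C(R))$ lie in $\MM(Z)$ --- only make explicit what the paper leaves implicit (the latter via closedness of $\MM(Z)$ in $\mathcal{UM}(Z,\rho_0)$, noted right after the lemma).
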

	\begin{proof}
		Observe that given an admissible relation $R$, $R=R_\rho$ for some $\rho\in \MM(Z)$ if and only if 
		\begin{equation*}
			\rho(i,j)
			\begin{cases}
				=1 \quad &\text{if } \, (i,j)\in R\\
				<1 \quad &\text{if } \, (i,j)\not\in R
			\end{cases}
		\end{equation*}
		which by GMVT holds if and only if,
		\begin{equation*}
			\tau_\rho(i)+\tau_\rho(j)+\log(\rho_0(i,j)^2)
			\begin{cases}
				=0 \quad &\text{if } \, (i,j)\in R\\
				<0 \quad &\text{if } \, (i,j)\not\in R
			\end{cases}
		\end{equation*}
		that is if and only if $\tau_\rho\in C(R)^*$. From this we can conclude the claims of (1), (2) and (3).\medskip
		
		For (3), if $R_1$ and $R_2$ are admissible, then $R_1 \cup R_2$ is also admissible. Therefore, from the above, we have $C(R_1 \cup R_2) \subseteq C(R_1) \cap C(R_2)$. If $C(R_1) \cap C(R_2) = \emptyset$, the result follows immediately. Otherwise, suppose $C(R_1) \cap C(R_2) \neq \emptyset$. Then $\tau \in C(R_1) \cap C(R_2)$ implies that $l_{ij}(\tau) = 0$ for all $(i, j)$ in both $R_1$ and $R_2$, and $l_{ij}(\tau) \leq 0$ for $i \neq j$. Hence, $\tau \in C(R_1 \cup R_2)$. Thus, we conclude that $C(R_1) \cap C(R_2) = C(R_1 \cup R_2)$.
	\end{proof}
	
	As a consequence of the above \Cref{cell} we have 
	
	\begin{equation*}
		\MM(Z,\rho_0)=\bigcup_{R \in \mathcal A_Z} \Phi^{-1}(R)=\bigcup_{R \in \mathcal A_Z}E_{\rho_0}(C(R)^*)
	\end{equation*}
	(where $E_{\rho_0} \colon (\R^n,\|\cdot\|_\infty)\to \mathcal{UM}(Z,\rho_0)$ is the inverse of the isometry $i_{\rho_0}$ from $\mathcal{UM}(Z,\rho_0)$ to $(\R^n,\|\cdot\|_\infty)$).
	Note that since $\MM(Z)$ is closed in $\mathcal{UM}(Z,\rho_0)$, 
	$E_{\rho_0}(C(R))=E_{\rho_0}(\overline{C(R)^*})=\overline{E_{\rho_0}(C(R)^*)}\subseteq\MM(Z)$ for any $R\in \mathcal A_Z$.  As a consequence, we have 
	\begin{equation*}
		\MM(Z,\rho_0)=\bigcup_{R \in \mathcal A_Z}E_{\rho_0}(C(R)).
	\end{equation*} Thus, the isometric embedding $i_{\rho_0}$, isometrically maps $\MM(Z,\rho_0)$ to 
	\begin{equation}\label{union}
		\mathcal{P}\coloneqq\bigcup_{R \in \mathcal A_Z}C(R)\subset(\R^n,\|\cdot\|_\infty)
	\end{equation}
	which is a finite union of polyhedrons in $\R^n$ (note that $\mathcal A_Z$ is finite since $Z$ is finite).
	\medskip
	
	Given an admissible relation $R$, we define a graph $\Gamma_R$ with vertex set $Z$, where two distinct vertices $i, j \in Z$ (where $i \neq j$) are connected by an edge if and only if $(i, j), (j, i) \in R$. For such graphs, there are two possible cases:
	
	\begin{enumerate}[{\it C{a}se 1:}]
		\item Either there is a vertex say $i_0\in Z$ such that all other vertices are connected to $i_0$ by an edge and there are no other edges or
		\item there exist four distinct vertices $i,j,k,l\in Z$ such that $i,j$ are connected by an edge and $k,l$ are connected by an edge.
	\end{enumerate} 
	If the graph $\Gamma_R$ is as in Case 1, we say the relation $R$ is of {\it Type 1} while in Case 2, we say $R$ is of {\it Type 2}.
	\begin{figure}[H]
		\centering
		\begin{tikzpicture}[scale=0.5,line cap=round,line join=round,>=triangle 45,x=1cm,y=1cm]
			\clip(-7,-3) rectangle (7,3);
			\draw [line width=0.4pt] (-5,2)-- (-3,-2);
			\draw [line width=0.4pt] (-5,2)-- (-6,0);
			\draw [line width=0.4pt] (-5,2)-- (-5,-2);
			\draw [line width=0.4pt] (-5,2)-- (-2,0);
			\draw [line width=0.4pt] (-5,2)-- (-3,2);
			\draw [line width=0.4pt] (3,2)-- (2,0);
			\draw [line width=0.4pt] (3,2)-- (6,0);
			\draw [line width=0.4pt] (3,2)-- (5,2);
			\draw [line width=0.4pt] (3,-2)-- (5,-2);
			\draw [line width=0.4pt] (-4,2)-- (-5,2);
			\begin{scriptsize}
				\draw [fill=black] (-6,0) circle (2.5pt);
				\draw[color=black] (-6.397,0.4457) node {$6$};
				\draw [fill=black] (-5,2) circle (2.5pt);
				\draw[color=black] (-5.2838,2.4301) node {$1$};
				\draw [fill=black] (-3,2) circle (2.5pt);
				\draw[color=black] (-2.8396,2.4543) node {$2$};
				\draw [fill=black] (-2,0) circle (2.5pt);
				\draw[color=black] (-1.8232,0.4457) node {$3$};
				\draw [fill=black] (-5,-2) circle (2.5pt);
				\draw[color=black] (-5.4048,-1.5871) node {$5$};
				\draw [fill=black] (-3,-2) circle (2.5pt);
				\draw[color=black] (-2.8396,-1.5629) node {$4$};
				\draw [fill=black] (3,2) circle (2.5pt);
				\draw[color=black] (3.162,2.4543) node {$1$};
				\draw [fill=black] (5,2) circle (2.5pt);
				\draw[color=black] (5.1706,2.4543) node {$2$};
				\draw [fill=black] (6,0) circle (2.5pt);
				\draw[color=black] (6.1628,0.4457) node {$3$};
				\draw [fill=black] (3,-2) circle (2.5pt);
				\draw[color=black] (2.5086,-1.9501) node {$5$};
				\draw [fill=black] (5,-2) circle (2.5pt);
				\draw[color=black] (5.3884,-1.9743) node {$4$};
				\draw [fill=black] (2,0) circle (2.5pt);
				\draw[color=black] (1.6132,-0.0141) node {$6$};
			\end{scriptsize}
		\end{tikzpicture}
		\caption{Examples of graphs for Type 1 and Type 2 relations for $Z=\{1,2,\cdots,6\}$}
	\end{figure}
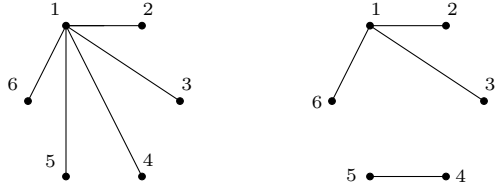
	
	\medskip
	
	\noindent If $R$ is of Type 1, that is, in $\Gamma_R$, vertex $i_0$ (let us say $i_0=1$ for ease of presentation) is connected to every other vertex, then for $\tau\in C(R)^*$
	\begin{equation*}
		\tau(j)=-\tau(1)-a_{1j} \text{ for all } j \neq 1,
	\end{equation*} 
	and
	\begin{equation*}
		\tau(i)+\tau(j)+a_{ij}<0 \ \ \ \ \text{ for all } i\neq j ,
	\end{equation*}
	Hence, it follows that
	\begin{equation*}
		\frac{-a_{1i} - a_{1j} + a_{ij}}{2} < \tau(1) \quad \text{for all } i \neq j \neq 1
	\end{equation*}
	Therefore, we have
	\begin{equation}\label{ray}
		C(R)^* = \biggl\{ \ (t, -t - a_{12}, \dots, -t - a_{1n}) \ \big| \ t \ge \max_{i \neq j \neq 1} \frac{-a_{1i} - a_{1j} + a_{ij}}{2} \ \biggr\} \neq \emptyset,
	\end{equation}
	and $R$ is an antipodal relation. Moreover, the closed polyhedral cell $C(R)$ is a geodesic ray with the end-point (at infinity) $1 \in \partial \MM(Z)$. Note that there are exactly $n$ Type 1 admissible relations. For each fixed $i \in Z$, we denote the Type 1 relation by $R_0^i$ as follows:
	\begin{equation*}
		R_0^i \coloneqq \{(i, j), (j, i) \mid j \in Z, j \neq i\}.
	\end{equation*}
	The above calculation shows that $C(R_0^i)^*$ is non-empty; hence, $R_0^i$ is always an antipodal relation. The cell $C(R_0^i)$ is the geodesic ray with end-point $i \in \partial \MM(Z) = Z$.
	\medskip
	
	\noindent Otherwise, for $R$ of Type 2, if $\rho\in \Phi^{-1}(R)\neq \emptyset$ then 
	\begin{equation*}
		\rho(i,j)=\exp(-(i|j)_\rho)=1=\exp(-(k|l)_\rho)=\rho(k,l)
	\end{equation*}
	for some four distinct points $i,j,k,l\in Z$ (by \Cref{gromov ip}) which implies $(i|j)_\rho=(k|l)_\rho=0$. Thus, $\rho$ lies at the intersection of a bi-infinite geodesic joining $i$ to $j$ and another bi-infinite geodesic joining $k$ to $l$. Then by \Cref{bddlema}, $\Phi^{-1}(R)$ is bounded and so is $C(R)$ as well. Therefore, if $R$ is of Type 2, the polyhedral cell $C(R)$ is a polytope.
	
	\begin{subrmk}\label{structure}
		It follows that for a finite antipodal space $(Z, \rho_0)$ consisting of $n \geq 4$ points, $\MM(Z)$ is the union of $n$ distinct geodesic rays (which are the only polyhedral cells $C(R)$ that are unbounded) and a compact part composed of the union of a finite number of polytopes in $(\R^n, \|\cdot\|_\infty)$.
	\end{subrmk}\medskip
	
	Now, we observe some properties for the faces of the polyhedrons $C(R)$, where $R$ is an antipodal relation.
	\begin{sublemma}\label{face}
		For a finite antipodal space $(Z,\rho_0)$\medskip
		
		(1)  If $R_1,R_2$ are antipodal relations and $\ \emptyset\neq R_1\subseteq R_2$ then $C(R_2)$ is a face of $C(R_1)$.\medskip
		
		(2) If $R$ is an antipodal relation and $F\subset C(R)$ is a proper non-empty face of $C(R)$ then $F=C(R')$ for some antipodal relation $R'$ with $R\subset R'$.
	\end{sublemma}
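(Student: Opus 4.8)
The plan is to translate both parts into the affine geometry of the polyhedra $C(R)\subset\R^n$, using the dictionary supplied by \Cref{cell}: a nonempty antipodal relation is exactly one of the form $R_\rho$ with $\rho\in\MM(Z)$, the isometry $i_{\rho_0}$ carries $\Phi^{-1}(R)$ onto $C(R)^*$, and $C(R)^*$ is the relative interior of $C(R)$. I would work throughout inside $\R^n$ via $i_{\rho_0}$, identifying $\MM(Z)$ with $\mathcal P=\bigcup_R C(R)$, and recall that $C(R)=\{\tau\in\R^n: l_{ij}(\tau)=0\ \forall (i,j)\in R,\ l_{ij}(\tau)\le 0\ \forall (i,j)\notin\Delta\}$, that each $l_{ij}$ is affine with nonzero linear part $\tau\mapsto\tau_i+\tau_j$, and that $\log\rho(i,j)^2=l_{ij}(\tau_\rho)$ for $\rho\in\MM(Z)$ by GMVT.

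For (1) one may assume $R_1\subsetneq R_2$, the case $R_1=R_2$ being trivial. I would take the affine function $\ell\coloneqq\sum_{(i,j)\in R_2\setminus R_1}l_{ij}$: choosing any $(i_0,j_0)\in R_2\setminus R_1$ shows the coefficient of $\tau_{i_0}$ in the linear part of $\ell$ is strictly positive, so $H\coloneqq\{\ell=0\}$ is a genuine hyperplane. On $C(R_1)$ every summand $l_{ij}$ is $\le 0$ (since $(i,j)\notin\Delta$), hence $\ell\le 0$ there, i.e. $C(R_1)$ lies in one of the closed half-spaces bounded by $H$; and for $\tau\in C(R_1)$ the vanishing $\ell(\tau)=0$ is equivalent to $l_{ij}(\tau)=0$ for all $(i,j)\in R_2\setminus R_1$, which, together with $l_{ij}(\tau)=0$ on $R_1$ and $l_{ij}(\tau)\le 0$ off $\Delta$, is precisely the condition $\tau\in C(R_2)$. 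Hence $H\cap C(R_1)=C(R_2)$, which is nonempty since $R_2$ is antipodal (so $C(R_2)^*\neq\emptyset$ by \Cref{cell}); therefore $C(R_2)$ is a face of $C(R_1)$.

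For (2) I would pick a point $\tau_0$ in the relative interior of the proper nonempty face $F$. Since $F\subseteq C(R)\subseteq\mathcal P=i_{\rho_0}(\MM(Z))$, the function $\rho'\coloneqq E_{\rho_0}(\tau_0)$ lies in $\MM(Z)$; set $R'\coloneqq R_{\rho'}=\Phi(\rho')$, a nonempty antipodal relation, and note that $R'=\{(i,j)\notin\Delta: l_{ij}(\tau_0)=0\}$ and $\tau_0\in C(R')^*$ (the relative interior of $C(R')$) by part (2) of \Cref{cell}. As $\tau_0\in C(R)$ forces $l_{ij}(\tau_0)=0$ for every $(i,j)\in R$, we get $R\subseteq R'$, so $C(R')$ is a face of $C(R)$ by part (1). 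Thus $F$ and $C(R')$ are both faces of the polyhedron $C(R)$, each containing $\tau_0$ in its relative interior; by the standard fact that every point of a polyhedron lies in the relative interior of exactly one of its faces (see e.g. \cite{ziegler2012lectures}), $F=C(R')$. Finally $F$ is a proper face, so $C(R')=F\neq C(R)$, whence $R\neq R'$ and therefore $R\subsetneq R'$. (Here, as in part (1), ``antipodal relation'' is meant in the nonempty sense, i.e. an element of the image of $\Phi$.)

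The one step that is not pure bookkeeping is this passage in (2) from an abstractly presented face $F=H\cap C(R)$ to an honest antipodal relation: the crucial point is that any relative-interior point $\tau_0$ of $F$ automatically represents a point of $\MM(Z)$ and hence determines $R'=R_{E_{\rho_0}(\tau_0)}$ with $\tau_0\in C(R')^*$. After that one only uses the standard combinatorics of polyhedral faces — a face of a face is a face, and the relative interiors of the faces partition the polyhedron — together with \Cref{cell} and the elementary observations (i)--(iii) above, so I would not spell those out.
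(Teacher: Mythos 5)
Your proof is correct, but it takes a genuinely different route from the paper in both parts. For (1), the paper does not use your single summed functional $\ell=\sum_{(i,j)\in R_2\setminus R_1}l_{ij}$: it interpolates between $R_1$ and $R_2$ by intermediate relations each adding one symmetric pair, notes that adding one pair cuts $C(R_1)$ by the hyperplane $\{l_{ij}=0\}$, and then writes $C(R_2)=\bigcap_k C(R^k)$ via \Cref{cell}(4), invoking the fact that an intersection of faces is a face. Your one-hyperplane argument is shorter, needs no intermediate relations, and the only point requiring care (that the linear part of $\ell$ is nonzero, so $\{\ell=0\}$ is a genuine hyperplane) you check explicitly. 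For (2), the paper works with the whole family $\mathcal R=\{R_\rho\ |\ \tau_\rho\in F\}$, shows by a midpoint argument ($\tau_{\sqrt{\rho_1\rho_2}}=\tfrac12(\tau_{\rho_1}+\tau_{\rho_2})$ and $R_{\sqrt{\rho_1\rho_2}}=R_{\rho_1}\cap R_{\rho_2}$) that $\mathcal R$ has a unique minimal element $R_{\rho_1}$, deduces $F\subseteq C(R_{\rho_1})$, and then proves $C(R_{\rho_1})^*\subseteq F$ by extending a segment past $\tau_{\rho_1}$ inside $C(R_{\rho_1})^*$ to contradict the supporting-hyperplane condition. You instead take one relative-interior point $\tau_0$ of $F$, use \Cref{cell}(2) to see $\tau_0\in C(R')^*$ for $R'=R_{E_{\rho_0}(\tau_0)}\supseteq R$, apply your part (1), and then quote the standard fact that the relative interiors of the faces of a polyhedron partition it. This cleanly reduces (2) to (1) plus a citation; the trade-off is that the partition fact is precisely the piece of polyhedral theory the paper chooses to reprove by hand (its segment-extension step is in effect a proof of the relevant instance), so the paper stays self-contained where you outsource. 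Both routes also give the strict inclusion $R\subsetneq R'$, which you note at the end.
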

	\begin{proof}[{\it Proof of (1)}]
		If $\#R_2-\#R_1=0$ then $R_1=R_2$, nothing to prove.
		If $\#R_2-\#R_1=2$ then there exist $i\neq j$ in $Z$ such that $R_2\setminus R_1={(i,j),(j,i)}$. By definition $C(R_1)\subset\{\tau\in \R^n\ |\ l_{ij}(\tau)=0\}$ and $C(R_2)=C(R_1)\cap\{\tau\in \R^n\ |\ l_{ij}(\tau)=0\}\neq \emptyset$. So $C(R_2)$ is a face of $C(R_1)$.
		
		Suppose $\#R_2-\#R_1=2k>2$ then we have $k$ antipodal relations $R^i$, $i=1,2,\cdots,k$ such that $R_1\subset R^i\subset R_2$, and $\#R^i-\#R_1=2$ with $\cup_{i=1}^kR^i=R_2$. By above argument each $C(R^i)$ is a face of $C(R_1)$, and by \Cref{cell}(4) we get $\cap_{i=1}^k C(R^i)=C(R_2)$. We know that the intersection of faces of the polyhedral cells is a face as well, so $C(R_2)$ is a face of $C(R_1)$.
		\medskip
		
		\noindent {\it Proof of (2)}. Let $H$ be a hyperspace. Let $H^+$ and $H^-$ denote the two open half spaces bounded by $H$. Without loss of generality let $C(R)\subset \overline{H^+}$ and the face $F=C(R)\cap H$. Consider the collection 
		\begin{equation*}
			\mathcal R=\{\ R_\rho\ |\ \rho\in \MM(Z,\rho_0) \text{ and }\tau_\rho\in F\ \}=\{\ R_\rho\ |\ \rho\in E_{\rho_0}(F)\ \}.
		\end{equation*} Since $F \subset C(R)$, observe that $R \subset R_\rho$ for each $\rho \in E_{\rho_0}(F)$ (see \Cref{cell}(3)). Consider the partial order by inclusion on $\mathcal{R}$. As $\mathcal{R}$ is a finite collection, it has a minimal element, say $R_{\rho_1}$. We will show that this minimal element is unique and $F = C(R_{\rho_1})$.
		
		Let there be another minimal element, say $R_{\rho_2}$. Now we know that $R\subset R_{\rho_1}\cap R_{\rho_2}$, so $R_{\rho_1}\cap R_{\rho_2}$ is an admissible relation. Observe that if we take $\rho=\sqrt{(\rho_1\cdot\rho_2)}$ then $\tau_\rho=\frac{1}{2}(\tau_{\rho_1}+\tau_{\rho_2})\in F$ and $R_\rho=R_{\rho_1}\cap R_{\rho_2}$. Now by minimality we must have $R_\rho=R_{\rho_1}=R_{\rho_2}$. So the minimal element $R_{\rho_1}$ in $\mathcal R$ is unique. This shows given any $\rho\in E_{\rho_0}(F)$ we have $R_{\rho_1}\subseteq R_{\rho}$ and hence $\tau_\rho\in C(R_\rho)\subseteq C(R_{\rho_1})$, implies $F\subseteq C(R_{\rho_1})$.
		
		\noindent \underline{Claim:} $C(R_{\rho_1})^*\subseteq F$ and hence $ C(R_{\rho_1})= F$, which completes our proof.
		
		Suppose the claim is not true, then there exists $\tau'\in C(R_{\rho_1})^*\setminus F\subset C(R)$. However, $\tau'\not\in H$, so $\tau'\in H^+$. Also, $\tau'\neq\tau_{\rho_1}\in F$ but both are in $C(R_{\rho_1})^*$. Note, $C(R_{\rho_1})^*$ being open subset of the affine subspace $A(R_{\rho_1})$, where
		\begin{equation*}
			A(R_{\rho_1})\coloneqq\{\ \tau \in \R^n\ |\ l_{ij}(\tau)=0\ \text{for }(i,j)\in R_{\rho_1}\ \}\subset \R^n,
		\end{equation*} we can extend the line segment $[\tau',\tau_{\rho_1}]$ contained in $C(R_{\rho_1})^*$ further to get $\tau''\in C(R_{\rho_1})^*$ such that $\tau_{\rho_1}=t\cdot\tau'+(1-t)\cdot\tau''$ for some $t\in (0,1)$. But $\tau'\in H^+$ and $\tau_{\rho_1}\in H$ implies $\tau''\in H^-$ hence $\tau''\not\in C(R_{\rho_1})$ which implies $\tau''\not\in C(R_{\rho_1})^*$. This is a contradiction. So we must have $C(R_{\rho_1})^*\subseteq F$.
	\end{proof}
	\noindent Now we are in a position to prove \Cref{1st Thm}
	
	\begin{proof}[{\bf Proof of \Cref{1st Thm}}] From \Cref{antipodal} we know for $(Z,\rho_0)$ finite antipodal space, $\MM(Z)$ is isometrically embedded in $(\R^n,\|\cdot\|_\infty)$. Consider the collection of polyhedral cells $\mathcal C\coloneqq\{C(R)\ |\ R\in \mathcal A_Z\}.$ Note, $\emptyset\in\mathcal C$ as empty relation is in $\mathcal A_Z$. By \Cref{face}(2) we have every face $F$ of a cell $C(R)$ is equal to $C(R')$ for some $R'\in \mathcal A_Z$. Consider any two cells $C(R_1),C(R_2)\in \mathcal C$ either $C(R_1)\cap C(R_2)=\emptyset\in \mathcal C$ or if $\emptyset \neq C(R_1)\cap C(R_2)$ then by \Cref{cell} $C(R_1)\cap C(R_2)=C(R_1\cup R_2)$ then $R_1\cup R_2\in \mathcal A_Z$ and hence $C(R_1\cup R_2)\in \mathcal C$. So the intersection of two cells $C(R_1), C(R_2)$ is in $\mathcal C$, and by \Cref{face}(1), it is the face of both the cells. Thus, $\mathcal{C}$ forms a polyhedral complex, where the underlying set $|\mathcal{C}|$ is equal to $\mathcal{P}$ (as defined in \eqref{union}). Consequently, $\MM(Z,\rho_0)$ is isometric to a polyhedral complex embedded in $(\mathbb{R}^n, \|\cdot\|_\infty)$. Finally, we have the rest from \Cref{structure}.
	\end{proof}
	
	\begin{figure}
		\centering
		\includegraphics[width=1\linewidth]{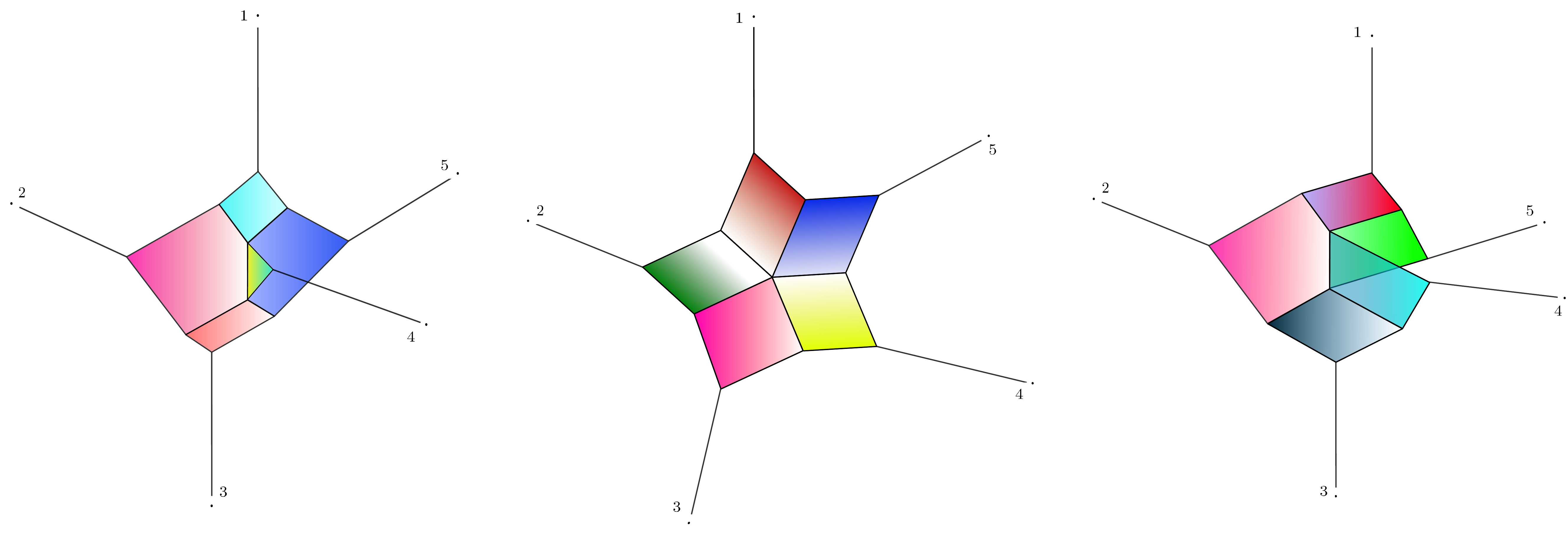}
		\caption{Example of possible polyhedral structures of $\MM(Z,\rho)$ for $Z=\{1,2,3,4,5\}$}
		\label{fig:drawing}
	\end{figure}
	
	\medskip
	
	\subsection{Dimension of the polyhedral cells}
	
	We now determine the dimensions of the non-empty cells $C(R)$, where $R\in \mathcal A_Z\setminus\{\emptyset\}$. In this case $C(R)^*$ is non-empty and $C(R)=\overline{C(R)^*}$. Note that 
	$C(R)^*$ the relative interior of $C(R)$ and is an open subset of the affine subspace $A(R)$, where
	\begin{equation}\label{affine subspace}
		A(R)\coloneqq\{\ \tau \in \R^n\ |\ l_{ij}(\tau)=0\ \text{for }(i,j)\in R\ \}\subset \R^n
	\end{equation} equipped with the subspace topology inherited from $\R^n$. Thus, the dimension of the polyhedral cell $C(R)$ is equal to the dimension of $C(R)^*$, which is equal to the dimension of $A(R)$. Observe that the dimension of $A(R)$ is less than or equal to the number of connected components in the graph $\Gamma_R$, and $R$ being an admissible relation, the number of connected components is at most $n/2$ (where $n=\#Z$). 
	
	The discussion in this paragraph is similar to that of \cite[Section 4, p. 312-313]{lang2013injective}. For $R\neq\emptyset$, antipodal relation, in the graph $\Gamma_R$ a {\it path} of length $\ell>0$ is a sequence of $\ell+1$ vertices such that the two consecutive vertices in the sequence are distinct and are connected by an edge. A {\it cycle} is a path with the last vertex in the sequence equal to the first one. Cycles are of length at least $2$, as there are no self-loops.
	Fix $\tau_0\in A(R)$. Suppose $(i,j)\in R$ is an edge in $\Gamma_R$ then for all $\tau \in A(R)$ we have $l_{ij}(\tau)=0$ and hence $\tau_0(i)-\tau(i)=-(\tau_0(j)-\tau(j))$. If $i$ and $j$ connected by a path of length $l$ in $\Gamma_R$ then 
	\begin{equation}\label{relation}
		\tau_0(i)-\tau(i)=(-1)^{\ell}(\tau_0(j)-\tau(j))
	\end{equation}
	Consider the connected component of $i\in Z$ in the graph $\Gamma_R$
	$$[i]=\{\ j\in Z\ |\ j \text{ is connected to } i \text{ by a path }\}\cup \{i\}$$
	Now if the component $[i]$ contains a cycle of odd length then $\tau|_{[i]}=\tau_0|_{[i]}$, by taking $i=j$ in \eqref{relation}. We shall call $[i]$ an {\it odd-component} if it contains a loop of odd length. Otherwise, we call it an {\it even-component}, where the connected component does not contain any loop of odd length. Given an even-component $[i]$ the sub-graph $[i]$ in $\Gamma_R$ is bipartite, i.e. it admits a unique partition 
	$$[i]=[i]_0\cup[i]_1$$ 
	such that $i\in [i]_0$ and every element in $[i]_1$ is connected to an element in $[i]_0$ by an edge. In particular take 
	\begin{align*}
		[i]_1&\coloneqq\{\ j\in [i]\ |\ j \text{ is connected to } i \text{ by a path of odd length }\}\\
		[i]_0&\coloneqq[i]\setminus[i]_1
	\end{align*}
	By \eqref{relation} for $\tau\in A(R)$ we have 
	\begin{align*}
		\tau(j)-\tau_0(j)=\tau(i)-\tau_0(i)\quad &\hbox{ if }\quad j\in[i]_0\\
		\tau(j)-\tau_0(j)=\tau_0(i)-\tau(i)\quad &\hbox{ if }\quad j\in[i]_1
	\end{align*} 
	
	Suppose $[i_1],\cdots,[i_K]$ are all the distinct even components in $\Gamma_R$. Then from above $\tau\in A(R)$ if and only if $\tau\in \R^n$ and
	
	\medskip 
	\noindent (1)  on odd components $\tau=\tau_0$
	
	\noindent (2) on even components $[i_k]$ for $k=1,\cdots,K$
	\begin{align*}
		\tau(j)=\tau_0(j)+t_k\quad &\hbox{ if }\quad j\in[i_k]_0\\
		\tau(j)=\tau_0(j)-t_k\quad &\hbox{ if }\quad j\in[i_k]_1
	\end{align*} 
	for some $(t_1,\cdots,t_K)\in \R^K$. In this case,  $\dim(A(R))=K$. Thus, it is clear that the dimension of $A(R)$ is precisely equal to the number of even-components in $\Gamma_R$.
	\medskip
	
	Now, we shall discuss the tangent spaces of $\MM(Z)$. Let $\rho\in\MM(Z)$ then $R_\rho$ is an antipodal relation. From  \Cref{tangent} the tangent space of $\MM(Z)$ at $\rho$, $T_\rho\MM(Z)=\OO_\rho(Z)\subset\R^n$. Let $\tau\in\R^n$, then $\tau\in\OO_\rho(Z)$, if and only if $\tau\in \R^n$ and $\tau(i)+\tau(j)=0$ whenever $(i,j)\in R_\rho$, if and only if 
	$l_{ij}(\tau)=a_{ij}$ whenever $(i,j)\in R_\rho$. We know that $\tau_\rho=\log\frac{d\rho}{d\rho_0}\in A(R_\rho)$. Suppose $\tau\in A(R_\rho)$, then whenever $(i,j)\in R_\rho$, 
	\begin{align*}
		&l_{ij}(\tau)=l_{ij}(\tau_\rho)=0\\
		{\iff}&\tau(i)+\tau(j)+a_{ij}=\tau_\rho(i)+\tau_\rho(j)+a_{ij}=0\\
		{\iff}&	(\tau(i)-\tau_\rho(i))+(\tau(j)-\tau_\rho(j))=0\\
		{\iff}& l_{ij}(\tau-\tau_\rho)=a_{ij}
	\end{align*} 
	So we conclude that $$\OO_\rho(Z)=A(R_\rho)-\tau_\rho,$$ moreover
	$$\dim (T_\rho\MM(Z))=\dim (\OO_\rho(Z))=\dim (A(R_\rho)).$$
	\begin{subrmk}
		Given any antipodal relation $R$ by \eqref{set} $R_\rho=R$ for all $\rho\in C(R)^*$. So we have the dimension of cell $C(R)$ is equal to the dimension of the tangent space $T_\rho\MM(Z)$, for every $\rho\in C(R)^*$.
	\end{subrmk}
	
	\bigskip 
	
	\section{Injectivity of maximal Gromov hyperbolic spaces with finite boundary}\label{Injective metric space and MZ}
	\noindent Let $(Z,\rho_0)$ be a quasi-metric antipodal space (not necessarily finite). For a fixed $r>0$ consider the metric sphere $S\coloneqq S_\MM(\rho_0,r)\subset \MM(Z)$. For $\tau \colon S\to \R$ a continuous function we define discrepancy of $\tau$ at finite distance, motivated by discrepancy function defined in  \cite[Definition 3.1]{biswas2024quasi}, as $D^r_{\rho_0}(\tau) \colon S\to \R$ by
	\begin{equation}
		D^r_{\rho_0}(\tau)(x)\coloneqq\sup_{y\in S\setminus\{x\}}\tau(x)+\tau(y)-2(x|y)_{\rho_0}
	\end{equation}
	
	Now observe that for $\tau\in C(S)\ $, we have $\ (r-\tau)\in E(S)$, the injective hull of $S$, if and only if $D^r_{\rho_0}(\tau)\equiv0$. This is because,
	\begin{equation*}
		\begin{split}
			D^r_{\rho_0}(\tau)(x)=&\sup_{y\in S\setminus\{x\}}\tau(x)+\tau(y)-2(x|y)_{\rho_0}=0\ , \quad \hbox{for all } x\in S\\
			\Longleftrightarrow& \sup_{y\in S\setminus\{x\}}\tau(x)+\tau(y)-r-r+d_\MM(x,y)=0\ ,\quad \hbox{for all } x\in S\\
			\Longleftrightarrow& \sup_{y\in S\setminus\{x\}}d_\MM(x,y)-(r-\tau(y))=(r-\tau(x))\ ,\quad \hbox{for all } x\in S
		\end{split}
	\end{equation*} 
	which is equivalent to $\ (r-\tau)\in E(S)$, from equation (\ref{injhull}).
	Also, $f\in E(S)$ if and only if $D^r_{\rho_0}(r-f)\equiv0$. Hence, we can write the following lemma.
	\begin{lemma}\label{ballembedding}
		$f\in E(S)$ if and only if $f \colon S\to \R$ is a continuous function  and $D^r_{\rho_0}(r-f)\equiv0$.
	\end{lemma}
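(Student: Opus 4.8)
The plan is a change of variables. The only geometric input is that the Gromov product based at $\rho_0$ degenerates on the sphere $S=S_\MM(\rho_0,r)$: for $x,y\in S$ one has $d_\MM(x,\rho_0)=d_\MM(y,\rho_0)=r$, hence $2(x|y)_{\rho_0}=d_\MM(x,\rho_0)+d_\MM(y,\rho_0)-d_\MM(x,y)=2r-d_\MM(x,y)$. Substituting $\tau=r-f$ in the definition of the discrepancy at finite distance and simplifying with this,
$$D^r_{\rho_0}(r-f)(x)=\sup_{y\in S\setminus\{x\}}\big[(r-f(x))+(r-f(y))-(2r-d_\MM(x,y))\big]=\Big(\sup_{y\in S\setminus\{x\}}\big(d_\MM(x,y)-f(y)\big)\Big)-f(x),$$
so $D^r_{\rho_0}(r-f)\equiv 0$ holds if and only if $f(x)=\sup_{y\in S\setminus\{x\}}\big(d_\MM(x,y)-f(y)\big)$ for every $x\in S$; these are exactly the equivalences indicated just before the statement.

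It then remains to recognise this last identity as the condition $f\in E(S)$. In the forward direction this is immediate: an $f\in E(S)$ is a minimal element of $\Delta(S)$, hence $1$-Lipschitz and in particular continuous, and it satisfies the displayed identity by \eqref{injhull}. For the converse, starting from a continuous $f$ satisfying the identity, the inequalities $f(x)+f(y)\ge d_\MM(x,y)$ for $x\ne y$ drop out at once, and the positivity $f\ge 0$ follows as well (at any $x$ that is not isolated in $S$, letting $y\to x$ in the identity gives $f(x)\ge\limsup_{y\to x}\big(d_\MM(x,y)-f(y)\big)=-f(x)$, using continuity of $f$); hence $f\in\Delta(S)$, and then the identity says precisely that $f$ is a minimal element of $\Delta(S)$, i.e. $f\in E(S)$. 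One should also note at the outset that $S$ is a nonempty compact metric space with at least two points, so that all the suprema above are over nonempty sets and are finite: $\MM(Z)$ is proper, unbounded and geodesically complete, so $B(\rho_0,r)$ is compact, and extending a geodesic through $\rho_0$ bi-infinitely shows $S$ meets at least two distinct geodesic rays issuing from $\rho_0$.

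I do not expect a genuine obstacle here; the statement is a routine reformulation once $2(x|y)_{\rho_0}=2r-d_\MM(x,y)$ is recorded. The one step that calls for a little care is the converse just described, namely ensuring $f\in\Delta(S)$ before invoking \eqref{injhull}, since that identity characterises extremal functions only within $\Delta(S)$. The positivity $f\ge 0$ is harmless when $S$ has no isolated points, and in the situations where the lemma is applied $f$ is in any case a nonnegative function, so $f\in\Delta(S)$ comes for free.
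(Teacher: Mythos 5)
Your argument follows the same route as the paper's: record that $2(x|y)_{\rho_0}=2r-d_\MM(x,y)$ for $x,y\in S$, substitute $\tau=r-f$ into the discrepancy, and match the resulting identity $f(x)=\sup_{y\in S\setminus\{x\}}\bigl(d_\MM(x,y)-f(y)\bigr)$ with \eqref{injhull}. Your forward direction and the preliminary remarks about $S$ being a nonempty compact set with at least two points are fine.

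The caveat you raise about the converse is, however, a genuine issue, and it is present in the paper's own proof (which silently uses \eqref{injhull} as an ``if and only if'') and indeed in the statement of \Cref{ballembedding} itself. The identity $f(x)=\sup_{y\in S\setminus\{x\}}\bigl(d_\MM(x,y)-f(y)\bigr)$ does not by itself force $f\ge 0$, hence does not force $f\in\Delta(S)$, when $S$ has isolated points; and $S$ is a finite set in the paper's main setting, so your ``let $y\to x$'' repair is unavailable precisely where the lemma is applied. Concretely, on a finite sphere $S=\{x_1,\dots,x_n\}$, for any $c>0$ the function $f(x_1)=-c$, $f(x_j)=d_\MM(x_1,x_j)+c$ for $j\neq 1$ is continuous and satisfies $D^r_{\rho_0}(r-f)\equiv 0$ (one checks the supremum at each $x_j$ is attained at $y=x_1$), yet $f\notin\Delta(S)$ and hence $f\notin E(S)$. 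So the ``if'' direction fails as literally stated. The fix is exactly the one you indicate: add the hypothesis $f\ge 0$ (equivalently $f\in\Delta(S)$) to the converse, after which the identity says $f$ is minimal in $\Delta(S)$, i.e. $f\in E(S)$. This costs nothing downstream, since in \Cref{Main Prop} the converse is only ever invoked for $f=d_\rho|_S$, which is nonnegative.
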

	
	\noindent Given a point $\rho\in B_\MM(\rho_0,r)$\footnote{Here, $B_\MM(\rho_0,r)= \{\ \rho\in \MM(Z)\ |\ d_\MM(\rho,\rho_0)\le r\ \}$ closed ball in $\MM(Z).$}  we have the distance function $d_{\rho} \colon S\to \R\ (x\mapsto d_\MM(x,\rho))$ restricted to $S$
	\begin{lemma}
		The map 
		\begin{align*}
			\mathcal E \colon B_\MM(\rho_0,r)&\to E(S)\\
			\rho&\mapsto d_{\rho}
		\end{align*}
		is an isometric embedding and $D^r_{\rho_0}(r-d_\rho)\equiv0$ for $\rho\in B_\MM(\rho_0,r)$.
	\end{lemma}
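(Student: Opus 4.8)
The plan is to reduce everything to a single geometric claim: \emph{given two distinct points $a,b\in B_\MM(\rho_0,r)$, there exists $x\in S$ with $d_\MM(a,x)=d_\MM(a,b)+d_\MM(b,x)$, i.e.\ $b$ lies on a geodesic joining $a$ to $x$.} To prove the claim I would pick a geodesic $\sigma\colon[0,\ell]\to\MM(Z)$ from $a$ to $b$, where $\ell=d_\MM(a,b)>0$, and use geodesic completeness of $\MM(Z)$ (\Cref{good}(4)) to extend $\sigma$ to a bi-infinite geodesic $\tilde\sigma\colon\R\to\MM(Z)$ with $\tilde\sigma|_{[0,\ell]}=\sigma$. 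The function $g(t)=d_\MM(\tilde\sigma(t),\rho_0)$ is $1$-Lipschitz (hence continuous), satisfies $g(\ell)=d_\MM(b,\rho_0)\le r$, and obeys $g(t)\ge(t-\ell)-r\to+\infty$ as $t\to+\infty$; hence $t^\ast\coloneqq\max\{t\ge\ell:g(t)\le r\}$ is well defined, and by continuity $g(t^\ast)=r$, so $x\coloneqq\tilde\sigma(t^\ast)\in S$. Since $0\le\ell\le t^\ast$, the point $b=\tilde\sigma(\ell)$ lies on the geodesic $\tilde\sigma|_{[0,t^\ast]}$ joining $a$ to $x$, which gives the claim.

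Next I would check that $\mathcal E$ is well defined, that is $d_\rho\in E(S)$ for every $\rho\in B_\MM(\rho_0,r)$; since $d_\rho$ is continuous, by \Cref{ballembedding} this is the same as the displayed identity $D^r_{\rho_0}(r-d_\rho)\equiv 0$. If $\rho\in S$ there is nothing to do, because then $d_\rho=e(\rho)$, the canonical isometric embedding $e\colon S\to E(S)$. If $\rho\in B_\MM(\rho_0,r)\setminus S$, I would verify $D^r_{\rho_0}(r-d_\rho)\equiv 0$ directly: for $x,y\in S$ one has $2(x|y)_{\rho_0}=2r-d_\MM(x,y)$, so
\[
D^r_{\rho_0}(r-d_\rho)(x)=\sup_{y\in S\setminus\{x\}}\bigl(d_\MM(x,y)-d_\rho(x)-d_\rho(y)\bigr).
\]
The triangle inequality makes the right-hand side $\le 0$, and applying the claim with $a=x$, $b=\rho$ produces a point $y\in S$ with $d_\MM(x,y)=d_\rho(x)+d_\rho(y)$; here $y\ne x$ because $d_\MM(x,y)\ge d_\MM(x,\rho)>0$, so the supremum is also $\ge 0$, hence $0$.

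Finally, for the isometric embedding statement I would fix $\rho,\rho'\in B_\MM(\rho_0,r)$ and prove $\|d_\rho-d_{\rho'}\|_\infty=d_\MM(\rho,\rho')$. The inequality ``$\le$'' is just the reverse triangle inequality $|d_\MM(x,\rho)-d_\MM(x,\rho')|\le d_\MM(\rho,\rho')$, valid for every $x\in S$. For ``$\ge$'' (assuming $\rho\ne\rho'$) I would apply the claim with $a=\rho'$ and $b=\rho$, obtaining $x\in S$ with $d_\MM(\rho',x)=d_\MM(\rho',\rho)+d_\MM(\rho,x)$, i.e.\ $d_{\rho'}(x)-d_\rho(x)=d_\MM(\rho,\rho')$, so $\|d_\rho-d_{\rho'}\|_\infty\ge d_\MM(\rho,\rho')$; injectivity of $\mathcal E$ is then automatic. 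I expect the only real obstacle to be the geometric claim, and within it the point that extending a geodesic segment lying in $B_\MM(\rho_0,r)$ beyond its endpoint really does meet the sphere $S$: this is exactly where geodesic completeness of $\MM(Z)$ is needed (it fails without it), together with the elementary fact that a geodesic ray is unbounded and an intermediate-value argument to locate the hitting point $t^\ast$.
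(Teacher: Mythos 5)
Your proposal is correct and follows essentially the same route as the paper: both parts rest on extending, by geodesic completeness, a geodesic from $x$ (resp.\ $\rho_1$) through $\rho$ (resp.\ $\rho_2$) until it meets the sphere $S$, and then reading off the extremality identity and the equality $\|d_{\rho_1}-d_{\rho_2}\|_\infty=d_\MM(\rho_1,\rho_2)$. The only difference is presentational: you isolate the sphere-hitting fact as a single claim and justify it with an explicit intermediate-value argument (and verify membership in $E(S)$ via the discrepancy criterion of \Cref{ballembedding} rather than directly via \eqref{injhull}), steps the paper leaves implicit.
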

	\begin{proof}
		Let $\rho\in B_\MM(\rho_0,r)$. We need to show $d_{\rho}(x)=\sup_{y\in S}d_\MM(x,y)-d_{\rho}(y)$ for all $x\in S$. Let $x\in S$ if $\rho=x$, then $d_\rho(y)=0=d_\MM(x,y)-d_\rho(y)$ for all $y\in S$, so we are done. Otherwise, if $\rho\neq x$ then by geodesic completeness of $\MM(Z)$ we can find a bi-infinite geodesic $\gamma$ containing $x$ and $\rho$, which intersects the metric sphere $S$ at point $y_0\neq x$ such that $\rho$ is on the segment $[xy_0]$ of $\gamma$. Then $d_{\rho}(x)= d_\MM(x,y_0)-d_{\rho}(y_0)=\sup_{y\in S}d_\MM(x,y)-d_{\rho}(y)$. Therefore, $d_\rho\in E(S)$.
		
		Now let $\rho_1,\rho_2\in  B_\MM(\rho_0,r)$. Now in $\MM(Z)$ there is a geodesic segment $[\rho_1,\rho_2]$. Again, by geodesic completeness of $\MM(Z)$, we can find a bi-infinite geodesic $\gamma'$, which is an extension of $[\rho_1,\rho_2]$. $\gamma'$ intersect the metric sphere $S$ at a point $x$  such that $\rho_2$ is on the segment $[\rho_1,x]$ of $\gamma'$. So we have  $d_\MM(\rho_1,\rho_2)=d_\MM(\rho_1,x)-d_\MM(\rho_2,x)$. Therefore, $\|d_{\rho_1}-d_{\rho_2}\|_\infty=d_\MM(\rho_1,\rho_2)$
		
		\noindent Hence $\rho\mapsto d_{\rho}$ is an isometric embedding, and we also have $D^r_{\rho_0}(r-d_\rho)\equiv0$.
	\end{proof}
	\medskip
	
	Let $(Z, \rho_0)$ be a finite antipodal space with $n \geq 4$ points. For $r > 0$, define $S_r \coloneqq S(\rho_0, r) \subseteq \MM(Z)$, the metric sphere centered at $\rho_0$. By \Cref{1st Thm}, we know that for sufficiently large $r > 0$, $S_r$ is a finite subset with cardinality $n$, containing one point from each geodesic ray extending to $\partial \MM(Z)$. Thus, we define
	$$
	S_r = \{x^r_i \mid i \in Z\},
	$$
	where $x^r_i$ denotes the unique point at the intersection of $S_r$ and the geodesic ray that extends to $i \in \partial \MM(Z) = Z$. We know $\MM(Z)$ is boundary continuous, so we have the Gromov product 
	$$(i|j)_{\rho_0}\coloneqq\lim_{x\to i,y\to j}(x|y)_{\rho_0}$$
	\noindent for any point $i,j\in\partial\MM(Z)=Z$. Observe that any geodesic ray $[\rho_0,i)$, where $i\in \partial\MM(Z)=Z$, must pass through $x^r_i$. So the Gromov product $(i|j)_{\rho_0}$, which we get as a limit, is attained at $(x^r_i|x^r_j)_{\rho_0}$.  Form \Cref{gromov ip} we have
	\begin{equation}\label{ip}
		\rho_0(i,j)=\exp(-(x^r_i|x^r_j)_{\rho_0}).
	\end{equation}
	\noindent By  \cite[Proposition 6.1]{biswas2024quasi} we know if $\rho\in \MM(Z)$ and $i\in \partial\MM(Z)=Z$
	$$\tau_\rho(i)=\log\frac{d\rho}{d\rho_0}(i)=\lim_{x\to i}d_\MM(x,\rho_0)-d_\MM(x,\rho)$$
	(this is true in general $\MM(Z)$). Again observe that, for $\rho\in B_\MM(\rho_0,r)$ any geodesic ray $[\rho,i)$, where $i\in \partial \MM(Z)=Z$, must pass through $x^r_i$. Then, by the same justification as above, we have 
	
	\begin{equation}\label{busemann}
		\tau_\rho(i)=\log\frac{d\rho}{d\rho_0}(i)=d(x_i,\rho_0)-d(x_i,\rho)=r-d_\rho(x^r_i)
	\end{equation} 
	
	Fix $\tilde{r}>0$ such that $\MM(Z)\setminus B_\MM(\rho_0,\tilde{r})$ is union of $n$ many rays, the metric sphere $S_{\tilde{r}}$ is a finite metric space, and equations \eqref{ip} and \eqref{busemann} holds. For ease of notation denote $x_i=x^{\tilde{r}}_i$. Given a continuous function $\tau$ on $Z$ we can view it as a function on $S_{\tilde{r}}$ and vice-versa by the identification of $Z$ and $S_{\tilde{r}}$,  $(i\leftrightarrow x_i)$ i.e. define $\tau(i)=\tau(x_i)$. The following \Cref{inter} shows some properties of extremal functions on $S_{\tilde{r}}$, and the 
	
	\begin{lemma}\label{inter}
		Let $f\in E(S_{\tilde{r}})$, an extremal function on $S_{\tilde{r}}$. (Recall the definition of discrepancy  \cite[Definition 3.1]{biswas2024quasi}) Then $f$ seen as a function on $Z$ by the above identification we have $D_{\rho_0}(\tilde{r}-f)\equiv0$ and $E_{\rho_0}(\tilde{r}-f)\in B_\MM(\rho_0,\tilde{r})$.
	\end{lemma}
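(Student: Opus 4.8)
The plan is to unwind the definition of the discrepancy $D_{\rho_0}$ at infinity and observe that, under the identification $i \leftrightarrow x_i$ of $Z$ with $S_{\tilde r}$, it measures exactly the failure of the extremality condition \eqref{injhull} for $f$. Recall from \cite{biswas2024quasi} that $D_{\rho_0}(\tau)(\xi)=\sup_{\eta\neq\xi}\bigl(\tau(\xi)+\tau(\eta)-2(\xi|\eta)_{\rho_0}\bigr)$ and that $E_{\rho_0}(\tau)$ is an antipodal function, i.e. $E_{\rho_0}(\tau)\in\MM(Z)$, precisely when $D_{\rho_0}(\tau)\equiv 0$ (this is the ``at infinity'' analogue of \Cref{ballembedding}). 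So both assertions of the lemma follow once we establish (a) $D_{\rho_0}(\tilde r-f)\equiv 0$, and (b) $\|\tilde r-f\|_\infty\le \tilde r$, using that $d_\MM(E_{\rho_0}(\tilde r-f),\rho_0)=\|i_{\rho_0}(E_{\rho_0}(\tilde r-f))\|_\infty=\|\tilde r-f\|_\infty$ since $i_{\rho_0}\circ E_{\rho_0}=\mathrm{id}$.

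For (a), first I would rewrite $2(i|j)_{\rho_0}$ in metric terms on the sphere. By \eqref{ip} we have $(i|j)_{\rho_0}=(x_i|x_j)_{\rho_0}$, and since $x_i,x_j$ lie on $S_{\tilde r}=S_\MM(\rho_0,\tilde r)$, expanding the Gromov product based at $\rho_0$ gives $2(x_i|x_j)_{\rho_0}=d_\MM(x_i,\rho_0)+d_\MM(x_j,\rho_0)-d_\MM(x_i,x_j)=2\tilde r-d_\MM(x_i,x_j)$. Substituting $\tau=\tilde r-f$ and this identity into the formula for the discrepancy, the terms $2\tilde r$ cancel and one gets $D_{\rho_0}(\tilde r-f)(i)=\sup_{j\neq i}\bigl(d_\MM(x_i,x_j)-f(x_j)\bigr)-f(x_i)$. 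Since $f\in E(S_{\tilde r})$, equation \eqref{injhull} says exactly that $\sup_{j\neq i}\bigl(d_\MM(x_i,x_j)-f(x_j)\bigr)=f(x_i)$, so $D_{\rho_0}(\tilde r-f)(i)=0$ for every $i\in Z$. Hence $D_{\rho_0}(\tilde r-f)\equiv 0$ and $E_{\rho_0}(\tilde r-f)\in\MM(Z)$.

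For (b), I would use that extremal functions are non-negative (from $2f(x_i)=f(x_i)+f(x_i)\ge d_\MM(x_i,x_i)=0$, by membership in $\Delta(S_{\tilde r})$) together with the bound $f(x_i)=\sup_{j\neq i}\bigl(d_\MM(x_i,x_j)-f(x_j)\bigr)\le \max_{j\neq i}d_\MM(x_i,x_j)\le \diam(S_{\tilde r})\le 2\tilde r$, the last inequality by the triangle inequality through $\rho_0$. Thus $0\le f(x_i)\le 2\tilde r$ for all $i$, so $|\tilde r-f(x_i)|\le \tilde r$, i.e. $\|\tilde r-f\|_\infty\le \tilde r$; combined with (a) this yields $E_{\rho_0}(\tilde r-f)\in B_\MM(\rho_0,\tilde r)$.

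The computation itself is routine; the only input requiring care is that the visual Gromov product at infinity $(i|j)_{\rho_0}$ is already realized on the finite sphere $S_{\tilde r}$, which is the content of \eqref{ip} (boundary continuity of $\MM(Z)$ together with the fact, used to define the points $x_i$, that every geodesic ray from $\rho_0$ towards $i\in\partial\MM(Z)$ passes through $x_i$). So there is no genuine obstacle beyond bookkeeping.
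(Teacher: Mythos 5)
Your proposal is correct and follows essentially the same route as the paper: the paper likewise uses \eqref{ip} to identify the discrepancy at infinity with the finite-distance discrepancy on $S_{\tilde r}$ (you merely inline the content of \Cref{ballembedding}, i.e.\ the Gromov product expansion $2(x_i|x_j)_{\rho_0}=2\tilde r-d_\MM(x_i,x_j)$ together with \eqref{injhull}, instead of citing it), and then concludes membership in $B_\MM(\rho_0,\tilde r)$ from the same bound $0\le f\le \diam(S_{\tilde r})\le 2\tilde r$. Your extra remark that vanishing of $D_{\rho_0}(\tilde r-f)$ is what puts $E_{\rho_0}(\tilde r-f)$ in $\MM(Z)$ is a point the paper leaves implicit, but it is not a different argument.
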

	\begin{proof}
		$f\in E(S_{\tilde{r}})$ if and only if $D^{\tilde{r}}_{\rho_0}(\tilde{r}-f)\equiv 0$.  For $i\in Z$ from equation \eqref{ip} 	
		\begin{align*}
			D_{\rho_0}(\tilde{r}-f)(i)=& \sup_{j\in Z\setminus\{i\}} \tilde{r}-f(i)+\tilde{r}-f(j)+\log \rho_0(i,j)^2\\
			=&\sup_{j\in Z\setminus\{i\}}\tilde{r}-f(i)+\tilde{r}-f(j)-2(i|j)_{\rho_0}\\
			=& \sup_{x_j\in S\setminus\{x_i\}} \tilde{r}-f(x_i)+\tilde{r}-f(x_j)-2(x_i|x_j)_{\rho_0}\\
			=&D^{\tilde{r}}_{\rho_0}(\tilde{r}-f)(x_i)=0
		\end{align*}
		So $D_{\rho_0}(\tilde{r}-f)\equiv 0$. For $f\in E(S)$ we know
		$$0\le f(x_i) \le \diam(S_{\tilde{r}})\le2\tilde{r}.$$
		Then seen as a function on $Z$ we have
		$$0\le f(i) \le2\tilde{r}$$ 
		which implies $\|\tilde{r}-f\|_\infty\le \tilde{r}$ in $(C(Z),\|\cdot\|_\infty)$. Hence, $E_{\rho_0}(\tilde{r}-f)\in B_\MM(\rho_0,\tilde{r})$.
	\end{proof}
	\noindent
	\begin{rmk} \label{exten}
		For $r\ge \tilde{r}$, the metric sphere $S_r$ are finite metric spaces, and equations \eqref{ip} and \eqref{busemann} are satisfied. The above \Cref{inter} can be proved similarly for all $r\ge \tilde{r}$.
	\end{rmk}
	Now, we shall prove the following proposition.
	\begin{prop}\label{Main Prop}
		Given a finite antipodal space $(Z,\rho_0)$,
		every closed ball in $\MM(Z,\rho_0)$ is an injective metric space. 
	\end{prop}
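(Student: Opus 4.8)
The plan is to show that every sufficiently large closed ball centred at $\rho_0$ is isometric to the injective hull of a finite metric space, and then to deduce the general statement from the elementary fact that a closed ball inside an injective metric space is itself injective. So the proof would split into three steps.

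\emph{Step 1: $B_\MM(\rho_0,R)$ is injective for $R\ge\tilde r$ (with $\tilde r$ as fixed above).} For such $R$ the sphere $S_R=\{x^R_i\mid i\in Z\}$ is a finite metric space, so its injective hull $E(S_R)$ is a well-defined metric space which is injective. We already know that $\mathcal E\colon B_\MM(\rho_0,R)\to E(S_R)$, $\rho\mapsto d_\rho|_{S_R}$, is an isometric embedding, so it suffices to check that $\mathcal E$ is onto. Given $f\in E(S_R)$, I would regard $f$ as a function on $Z$ via the identification $i\leftrightarrow x^R_i$ and set $\rho\coloneqq E_{\rho_0}(R-f)$. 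By \Cref{inter} together with \Cref{exten} one gets $\rho\in B_\MM(\rho_0,R)$ (the discrepancy $D_{\rho_0}(R-f)$ vanishes, forcing $\rho$ to be an antipodal function, and $\|R-f\|_\infty\le R$ puts it in the ball). Since $\tau_\rho=R-f$ on $Z$, the Busemann-type identity \eqref{busemann} gives $d_\rho(x^R_i)=R-\tau_\rho(i)=f(x^R_i)$ for every $i$, i.e. $\mathcal E(\rho)=f$. Hence $\mathcal E$ is a surjective isometry, $B_\MM(\rho_0,R)$ is isometric to $E(S_R)$, and in particular injective.

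\emph{Step 2: a closed ball in an injective metric space is injective.} Let $X$ be injective, equivalently hyperconvex by \Cref{hyperconvex thm}, and let $B=B_X(p,R_0)$ be a closed ball. Given any family $\{B_B(y_i,r_i)\}_{i\in I}$ of closed balls of $B$ satisfying $r_i+r_j\ge d(y_i,y_j)$, adjoin the ball $B_X(p,R_0)$; because $d(p,y_i)\le R_0\le R_0+r_i$, the enlarged family of balls of $X$ still satisfies the pairwise condition \eqref{intersection}, so by hyperconvexity of $X$ it has a common point $z$, and $z\in B_X(p,R_0)=B$ automatically lies in $\bigcap_i B_B(y_i,r_i)$. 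Thus $B$ is hyperconvex, hence injective.

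\emph{Step 3: conclusion.} For an arbitrary closed ball $B_\MM(\rho,r)$ in $\MM(Z,\rho_0)$, put $R\coloneqq\max\{\tilde r,\ d_\MM(\rho_0,\rho)+r\}\ge\tilde r$; then $B_\MM(\rho,r)\subseteq B_\MM(\rho_0,R)$, the latter is injective by Step 1, and so $B_\MM(\rho,r)$ is a closed ball inside an injective metric space and hence injective by Step 2. The only step carrying real content is the surjectivity of $\mathcal E$ in Step 1 — this is precisely where boundary continuity of $\MM(Z)$ is used, through \eqref{busemann} and \Cref{inter} — while Steps 2 and 3 are purely formal; the main thing to get right is the bookkeeping that the point produced from an extremal function $f$ actually lands in the right ball and realizes the right distance function.
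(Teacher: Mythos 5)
Your proposal is correct and follows essentially the same route as the paper: the core step, proving that $\mathcal E\colon B_\MM(\rho_0,r)\to E(S_r)$ is a surjective isometry for $r\ge\tilde r$ via \Cref{inter}, \Cref{exten} and \eqref{busemann}, is exactly the paper's argument. Your Steps 2 and 3 merely spell out the deduction the paper leaves implicit in the phrase ``it will follow every closed ball in $\MM(Z)$ is injective'', and they do so correctly.
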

	\begin{proof}
		We shall prove that for all $r\ge \tilde{r}$ each closed ball $B_\MM(\rho_0,r)$ is the injective hull of the metric sphere $S_r=S_\MM(\rho_0,r)$. It will follow every closed ball in $\MM(Z)$ is injective. It is enough to prove the isometric embedding $\mathcal E \colon B_\MM(\rho_0,r)\to E(S_r)$ defined in \Cref{ballembedding} is surjective in this case. For $\rho\in B_\MM(\rho_0,r)$, by identification of $S_r$ and $Z$ $(i\leftrightarrow x_i)$ taking $\tau_\rho(x^r_i)\coloneqq\tau_\rho(i)$ we have  
		\begin{align*}
			D^r_{\rho_0}(\tau_\rho)(x^r_i)=&\sup_{x_j\in S\setminus\{x^r_i\}}\tau_\rho(x^r_i)+\tau_\rho(x^r_j)-2(x^r_i|x^r_j)_{\rho_0}\\
			=& \sup_{j\in Z\setminus\{i\}} \tau_\rho(i)+\tau_\rho(j)-2(i|j)_{\rho_0}\\
			=& \sup_{j\in Z\setminus\{i\}} \tau_\rho(i)+\tau_\rho(j)+\log \rho_0(i,j)^2\\
			=&D_{\rho_0}\tau_\rho(i)=0
		\end{align*}
		So discrepancy at finite distance $D^r_{\rho_0}(\tau_\rho)\equiv 0$ and therefore $r-\tau_\rho\in E(S_r)$. Also, from equation \eqref{busemann} $\tau_\rho(x^r_i)=\tau(i)=r-d_\rho(x_i)$ for all $x_i\in S$.
		Let $f\in E(S_r)$ then by \Cref{inter} along with \Cref{exten} we know that $E_{\rho_0}(r-f)\in B_\MM(\rho_0,r)$ for $r\ge \tilde{r}$. Take $\rho=E_{\rho_0}(r-f)$ then for $x_i\in S$ 
		$$r-f(x_i)=r-f(i)=\tau_\rho(i)=\tau_\rho(x_i)=r-d_\rho(x_i)$$
		which implies $f=d_\rho$ and hence $\mathcal E(\rho)=f$. Therefore, $\mathcal E$ is surjective.
		So for $r\ge \tilde{r}$, $B_\MM(\rho_0,r)$ is injective hull of the metric sphere $S_r$.
	\end{proof}

	Observe the following fact about injective metric spaces from \cite[p. 30]{petrunin2023pure}:
	\medskip
	
	\noindent \underline{{\bf Fact:}} If $X$ is a proper metric space and finitely hyperconvex then $X$ is hyperconvex, hence injective. 
	\medskip
	
	\noindent A space $X$ is said to be finitely hyperconvex if the condition (\ref{hyp}) in \Cref{hyperconvex thm} holds for a finite family of balls. Proof of the fact follows from the Cantor intersection property. We use this fact to prove the injectivity of maximal Gromov hyperbolic spaces.
	
	\begin{theorem}
		Any maximal Gromov hyperbolic space with finite boundary points is an injective metric space. That is, maximal Gromov hyperbolic spaces are hyperconvex.
	\end{theorem}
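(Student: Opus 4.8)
The plan is to reduce the statement to \Cref{Main Prop} together with the Fact recalled just above it (a proper, finitely hyperconvex metric space is hyperconvex). Let $X$ be a maximal Gromov hyperbolic space with $\partial X$ finite. As recalled in \Cref{gromov ip}, $X$ is isometric to $\MM(\partial X,\rho_x)$ for any base point $x$, and $(\partial X,\rho_x)$ is then a finite quasi-metric antipodal space; so it suffices to prove that $\MM(Z,\rho_0)$ is injective for every finite antipodal space $(Z,\rho_0)$. (If $\#\partial X\le 3$ the space is a point, a geodesic line, or a tree, all of which are injective, so we may assume $n=\#Z\ge 4$, which is the setting of the whole paper.) Since $\MM(Z,\rho_0)$ is proper, by the Fact it is enough to verify that $\MM(Z,\rho_0)$ is finitely hyperconvex, i.e. that condition (\ref{hyp}) of \Cref{hyperconvex thm} holds for \emph{finite} families of closed balls.

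So let $\{B(x_i,r_i)\}_{i\in I}$ be a finite family of closed balls in $\MM(Z)=\MM(Z,\rho_0)$ satisfying $r_i+r_j\ge d_\MM(x_i,x_j)$ for all $i,j\in I$. Let $\tilde r>0$ be the constant fixed in the discussion preceding \Cref{Main Prop}, and choose
$$
r\ \ge\ \max\Bigl\{\,\tilde r,\ \max_{i\in I}\bigl(d_\MM(\rho_0,x_i)+r_i\bigr)\,\Bigr\},
$$
which is possible since $I$ is finite. Set $B\coloneqq B_\MM(\rho_0,r)$. Then each $x_i\in B$, and moreover $B(x_i,r_i)\subseteq B$: if $d_\MM(x_i,y)\le r_i$ then $d_\MM(\rho_0,y)\le d_\MM(\rho_0,x_i)+r_i\le r$. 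Consequently $B(x_i,r_i)$ coincides with the closed ball of radius $r_i$ about $x_i$ computed inside the metric subspace $B$, and the inequalities $r_i+r_j\ge d_\MM(x_i,x_j)$ are exactly the compatibility conditions of \Cref{hyperconvex thm}(3) for this finite family of balls \emph{inside} $B$.

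Now $B=B_\MM(\rho_0,r)$ is an injective metric space by \Cref{Main Prop} (using $r\ge\tilde r$), hence hyperconvex by \Cref{hyperconvex thm}. Applying hyperconvexity of $B$ to the finite family $\{B(x_i,r_i)\}_{i\in I}$ viewed inside $B$ yields a point in $\bigcap_{i\in I}B(x_i,r_i)\subseteq B$, so this intersection is non-empty. Thus $\MM(Z)$ is finitely hyperconvex, and being proper it is hyperconvex, hence injective, by the Fact. Since $X$ is isometric to such an $\MM(Z,\rho_0)$, the theorem follows.

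The proof is short; the only point requiring care is the choice of the enclosing ball $B$: it must be large enough to contain the entire finite family while still having radius $\ge\tilde r$, so that \Cref{Main Prop} applies. Note that one cannot argue more directly that an increasing union of injective spaces is injective (this fails in general) — routing through \emph{finite} hyperconvexity plus properness is what makes the argument work. I do not expect any serious obstacle beyond this bookkeeping.
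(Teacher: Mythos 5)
Your proposal is correct and follows essentially the same route as the paper: reduce to $\MM(Z,\rho_0)$, use \Cref{Main Prop} to get injectivity (hence hyperconvexity) of large closed balls, deduce finite hyperconvexity, and conclude via properness and the quoted Fact. The only difference is that you spell out explicitly the enclosing-ball bookkeeping (choosing $r\ge\max\{\tilde r,\max_i(d_\MM(\rho_0,x_i)+r_i)\}$) that the paper leaves implicit in the phrase ``hence finitely hyperconvex''.
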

	
	\begin{proof}
		We know that for finite antipodal space $(Z,\rho_0)$, every closed ball in $\MM(Z,\rho_0)$ is injective by \Cref{Main Prop}, hence finitely hyperconvex. We also know $\MM(Z,\rho_0)$ is a proper metric space (see \cite[Lemma 2.7]{biswas2024quasi}). Then, by the above fact, we have $\MM(Z,\rho_0)$ as an injective metric space. 
	\end{proof}
	
	\bigskip
	
	\section{Teichmuller spaces of $\MM(Z, \rho)$ for finite $Z$}\label{Teich}
	\noindent In this section, we aim to explore the space of deformations of the maximal Gromov hyperbolic space with a fixed finite number of boundary points. For this section, unless otherwise mentioned, $Z$ is a finite set with $m\ (\ge 4)$ elements.
	
	\subsection{Big-Teichmuller Space}
	Let $Z$ be a finite set, $\ANZ$ denote the set of all antipodal functions on $Z$. We say two antipodal functions $\rho_1,\rho_2\in \ANZ$ are Moebius equivalent if they have the same cross-ratios (see \Cref{same cross-ratio}), denoted by $\rho_1\sim_M \rho_2$ and denote the equivalence class by $\MM(\rho_1)$. For any $\rho\in \ANZ$ note that from \Cref{Moebius space} we have $\MM(\rho)=\MM(Z,\rho)$.
	
	Consider the Type 1 admissible relation (see \Cref{polyhedral structure})
	\begin{equation*}
		R_0^1=\{(1,i),(i,1)\ |\ i\in Z, i\neq1\}.
	\end{equation*}
	In view of \Cref{1st Thm}, for any $\rho\in \ANZ$ there exists $\rho_0\in\MM(\rho)=\MM(Z,\rho)$ such that 
	\begin{equation*}
		R_{\rho_0}=\{(i,j)\in Z\times Z\ |\ \rho(i,j)=1\}=R_0^1.
	\end{equation*} 
	Now, consider the following subset $\mathscr{A}^1(Z)\coloneqq \{ \rho \in \ANZ\ |\ R_{\rho}=R_0^1\ \}\subseteq \ANZ$. Then, every Moebius equivalence class $\MM(\rho)$ intersects the subset $\mathscr A^1(Z)$.
	Next, we define the following:
	\begin{subdefinition}[{\bf Big-teichmuller space}]\label{big-Teich}
		Given a finite set $Z$, consider $\ANZ$, the set of antipodal functions on $Z$. The big-Teichmuller space $\TT(Z)$ is defined as the set of Moebius equivalence  classes in $\ANZ$,
		\begin{equation*}
			\TT(Z)\coloneqq \faktor{\ANZ}{\sim_M}.
		\end{equation*}
	\end{subdefinition}
	\noindent From the discussion above, we have the following map,
	\begin{equation*}
		\begin{split}
			\FF: \mathscr{A}^1(Z)&\to \TT(Z)\\
			\rho\ &\mapsto\ \MM(\rho)
		\end{split} 
	\end{equation*}
	is surjective. As a result we can quotient $\mathscr{A}^1(Z)$ by Moebius equivalence relation to get the bijection,
	\begin{equation}\label{bijection1}
		\begin{split}
			\tilde{\FF}: \left(\faktor{\mathscr{A}^1(Z)}{\sim_M}\right)&\to \TT(Z)\\
			[\rho]\quad\quad  &\mapsto \ \MM(\rho).
		\end{split}
	\end{equation}
	
	For any separating function $\rho$ on $Z$ we have the $m\times m$ matrix representation 
	\begin{equation*}
		M_\rho\coloneqq [\rho(i,j)]_{ij}
	\end{equation*} which is symmetric, having diagonal entries $0$ and off-diagonal entries positive. For $\rho\in\ANZ$, off-diagonal entries of $M_\rho$ are positive less or equal to $1$, with at least one entry in every row being $1$. If $\rho\in \mathscr{A}^1(Z)$ then the symmetric matrix $M_{\rho}$ has the following form
	\begin{equation}\label{matrix form}
		\begin{bmatrix}
			0 & 1 & 1 & \cdots & 1 \\
			1 & 0 & * & \cdots & * \\
			1 & * & 0 & \cdots & * \\
			\vdots & \vdots & \vdots & \ddots & \vdots \\
			1 & * & *& \cdots  & 0
		\end{bmatrix}
	\end{equation}
	where the $*$-marked entries are strictly between 0 and 1. It is clear that, $\rho\in \mathscr{A}^1(Z)$ is determined by the $(1,1)$-minor of the matrix $M_\rho$ denoted by $(M_\rho)_{1,1}$. Let us consider the ordered $(m(m-1)/2)$-tuple
	\begin{equation*}
		S_{1,1}(M_{\rho})=(\ \rho(2,3),\ \rho(2,4),\cdots,\ \rho(m-1,m)\ ),
	\end{equation*} 
	where row-wise entries of $(M_\rho)_{1,1}$ are written in order. From the matrix form \eqref{matrix form} for $\rho\in \mathscr{A}^1(Z)$ it is clear that 
	\begin{equation}\label{auxiliary bijection}
		\begin{tikzcd}[column sep=1,row sep=1]
			{\mathscr{A}^1(Z)} & {} & {(0,1)^{1+p_m}} & {(\ \subseteq\ \  \R^{1+p_m}\ )}\\
			\rho && {S_{1,1}(M_{\rho})} & {}
			\arrow[from=1-1, to=1-3]
			\arrow[maps to, from=2-1, to=2-3]
		\end{tikzcd}
	\end{equation}
	is one-one and onto, where $p_m\coloneqq m(m-3)/2$. 
	
	The following \Cref{Multiple Lemma} provides a characterization for the Moebius equivalence of antipodal functions in $\mathscr{A}^1(Z)$.
	\begin{sublemma}\label{Multiple Lemma}
		Let $\rho_0\in \mathscr{A}^1(Z)$ and $\rho_1$ be a separating function on $Z$, such that $\rho_1(1,i)=1$ for $i\in Z\setminus \{1\}$. Then $\rho_0,\rho_1$ are Moebius equivalent (i.e. $\rho_1\in \mathcal{UM}(Z,\rho_0)$), if and only if there exists $\lambda>0$ such that $\rho_1(i,j)=\lambda\cdot\rho_0(i,j)$ for distinct $i,j\in Z\setminus\{1\}$. Consequently, for any separating function $\rho$ on $Z$ having the same form as $\rho_1$, there exists $\rho'\in \mathscr{A}^1(Z)$ such that $\rho'\in\mathcal{UM}(Z,\rho)$.
		
	\end{sublemma}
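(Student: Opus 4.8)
The plan is to deduce the whole statement from the Geometric Mean--Value Theorem (\Cref{GMVT}), which characterises Moebius equivalence of two separating functions by the existence of a single positive function on $Z$; since $Z$ is finite such a function is automatically continuous, so the only content is an elementary rescaling computation. For the forward implication, suppose $\rho_0 \sim_M \rho_1$. By \Cref{GMVT} there is a positive $\phi \colon Z \to (0,\infty)$ with $\rho_1(\xi,\eta)^2 = \phi(\xi)\phi(\eta)\,\rho_0(\xi,\eta)^2$ for all $\xi,\eta \in Z$. Evaluating at $\xi = 1$, $\eta = i$ for $i \in Z \setminus \{1\}$ and using that $\rho_0 \in \mathscr{A}^1(Z)$ forces $\rho_0(1,i) = 1$, together with the hypothesis $\rho_1(1,i) = 1$, we get $\phi(1)\phi(i) = 1$ for every $i \neq 1$; hence $\phi$ is constant equal to $\lambda \coloneqq \phi(1)^{-1} > 0$ on $Z \setminus \{1\}$. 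Substituting back for distinct $i,j \in Z \setminus \{1\}$ gives $\rho_1(i,j)^2 = \lambda^2 \rho_0(i,j)^2$, and taking positive square roots yields $\rho_1(i,j) = \lambda \rho_0(i,j)$, as required.

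Conversely, given $\lambda > 0$ with $\rho_1(i,j) = \lambda\rho_0(i,j)$ for all distinct $i,j \in Z \setminus \{1\}$, I would exhibit the needed function directly: set $\phi(1) \coloneqq \lambda^{-1}$ and $\phi(i) \coloneqq \lambda$ for $i \neq 1$. Verifying the identity $\rho_1(\xi,\eta)^2 = \phi(\xi)\phi(\eta)\rho_0(\xi,\eta)^2$ splits into three cases: $\xi = \eta$ (both sides $0$); $\{\xi,\eta\} = \{1,i\}$ (both sides $1$, again using $\rho_0(1,i) = \rho_1(1,i) = 1$); and $\xi = i \neq j = \eta$ with $i,j \neq 1$ (both sides $\lambda^2\rho_0(i,j)^2$). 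Since $\rho_0$ (being antipodal, hence separating) and $\rho_1$ are separating, \Cref{GMVT} gives $\rho_0 \sim_M \rho_1$, i.e. $\rho_1 \in \mathcal{UM}(Z,\rho_0)$.

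For the concluding assertion, let $\rho$ be any separating function with $\rho(1,i) = 1$ for $i \neq 1$. Since $m \geq 4$, the set $Z \setminus \{1\}$ contains a pair of distinct points, so $M \coloneqq \max_{i \neq j,\ i,j \neq 1}\rho(i,j)$ is a well-defined positive number; fix $\mu$ with $0 < \mu < 1/M$ and define $\rho'$ by $\rho'(\xi,\xi) = 0$, $\rho'(1,i) = 1$ for $i \neq 1$, and $\rho'(i,j) = \mu\rho(i,j)$ for distinct $i,j \neq 1$. Then $\rho'$ is a separating function whose only off-diagonal entries equal to $1$ are those on pairs $\{1,i\}$, all remaining off-diagonal entries being $< 1$; hence $\rho'$ has diameter $1$ and is antipodal (every point is $1$-related to $1$, and $1$ is $1$-related to any $i \neq 1$), so $R_{\rho'} = R_0^1$ and $\rho' \in \mathscr{A}^1(Z)$. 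Finally $\rho(i,j) = \mu^{-1}\rho'(i,j)$ for distinct $i,j \neq 1$, so the equivalence just proved, applied with $\rho'$ in the role of $\rho_0$, $\rho$ in the role of $\rho_1$ and $\lambda = \mu^{-1}$, gives $\rho \sim_M \rho'$; since $\rho'$ is separating this means $\rho' \in \mathcal{UM}(Z,\rho)$, as desired.

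I do not anticipate any genuine obstacle: the entire argument is bookkeeping around \Cref{GMVT}. The two points requiring a little care are that membership of $\rho_0$ in $\mathscr{A}^1(Z)$ is exactly what supplies the normalisation $\rho_0(1,i) = 1$ used in both implications, and that the rescaling producing $\rho'$ in the last step must be checked to preserve antipodality and the Type $1$ relation --- which is precisely where the standing hypothesis $m \geq 4$ enters, guaranteeing both that $M$ is defined and that the surviving unit entries still realise the diameter.
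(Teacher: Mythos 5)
Your proposal is correct and follows essentially the same route as the paper: both directions are the GMVT rescaling computation (the paper phrases the forward step as conjugation of $M_{\rho_1}$ by a diagonal matrix, which is the same comparison of first-row entries giving $\phi(1)\phi(i)=1$), and your construction of $\rho'$ by scaling the off-$\{1\}$ entries below $1$ is exactly the paper's choice of $\lambda>\max\rho(i,j)$ written multiplicatively. No gaps.
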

	\begin{proof}
		Observe that $M_{\rho_1}$ has a matrix form similar to \eqref{matrix form}, where $*$-marked entries are positive. Suppose $\rho_1\in \mathcal{UM}(Z,\rho_0)$ then by \Cref{GMVT} we have,
		\begin{equation*}
			\begin{bmatrix}
				e^{\tau(1)/2} & 0 & \cdots & 0 \\
				0 & e^{\tau(2)/2} & \cdots & 0 \\
				\vdots & \vdots & \ddots & \vdots \\
				0 & 0 & \cdots  & e^{\tau(m)/2}
			\end{bmatrix}\cdot M_{\rho_1} \cdot
			\begin{bmatrix}
				e^{\tau(1)/2} & 0 & \cdots & 0 \\
				0 & e^{\tau(2)/2} & \cdots & 0 \\
				\vdots & \vdots & \ddots & \vdots \\
				0 & 0 & \cdots  & e^{\tau(m)/2}
			\end{bmatrix} = M_{\rho_2},
		\end{equation*}
		where $\tau=\log\frac{d\rho_2}{d\rho_1}$. Comparing coefficients of first row on either sides we have $\tau(i)=-\tau(1)$, for $i\in Z\setminus\{1\}$. Hence, for $i,j\in Z\setminus\{1\}$, we have 
		\begin{equation*}
			\rho_2(i,j)= e^{\tau(i)/2}\cdot e^{\tau(j)/2}\cdot \rho_1(i,j)=e^{-\tau(1)}\cdot\rho_1(i,j).
		\end{equation*}
		
		\noindent The converse direction involves a straightforward verification of cross-ratios.
		
		For last part choose $\lambda>\max\{\ \rho(i,j)\ |\ 1<i<j\le m\ \}$. 
		Define $\rho'(1,i)=1$ for $i\in Z\setminus\{1\}$ and 
		$$\rho'(i,j)=\frac{1}{\lambda}
		\cdot\rho(i,j), \quad \hbox{for } 1<i<j\le m.$$
	\end{proof}
	
	Define the open simplex of dimension $p_m:=m(m-3)/2$ in $\R^{1+p_m}$
	\begin{equation*}
		\mathring{\Delta}^{p_m} \coloneqq \biggl\{\  x\in \R^{1+p_m}\ |\ x_i>0,\  \sum_{i=1}^{p_m} x_i = 1\ \biggr\}.
	\end{equation*}
	(cf. \cite[Chapter 2.1]{hatcher2002algebraic}). We define the following map,
	\begin{equation*}
		\begin{tikzcd}[column sep=1,row sep=1]
			{\Psi\ :} & {\left(\faktor{\mathscr{A}^1(Z)}{\sim_M}\right)} & {} & {\mathring{\Delta}^{p_m}}\\
			{} & {[\rho]} & {} & {\dfrac{S_{1,1}(M_\rho)}{\|S_{1,1}(M_\rho)\|_1}}.
			\arrow[from=1-2, to=1-4]
			\arrow[maps to, from=2-2, to=2-4]
		\end{tikzcd}
	\end{equation*}
	Here $$\|S_{1,1}(M_\rho)\|_1=\sum_{1<i<j\le m}\rho(i,j)$$ is the sum of upper triangular entries of the matrix $(M_\rho)_{1,1}$.
	\begin{sublemma}\label{bijection2}
		$\Psi$ defined above is well-defined and is a bijection. 
	\end{sublemma}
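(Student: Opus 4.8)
The plan is to read everything off from \Cref{Multiple Lemma} together with the explicit coordinate parametrization \eqref{auxiliary bijection}. First I would check that $\Psi$ genuinely takes values in $\mathring{\Delta}^{p_m}$: for $\rho\in\mathscr{A}^1(Z)$ the matrix form \eqref{matrix form} shows that every off-diagonal entry of $(M_\rho)_{1,1}$ lies in $(0,1)$, so the tuple $S_{1,1}(M_\rho)$ has strictly positive coordinates, and dividing by $\|S_{1,1}(M_\rho)\|_1$ (the finite, positive sum of those coordinates) yields a vector with positive coordinates summing to $1$, i.e. a point of the open simplex. So the formula for $\Psi$ makes sense once we know it descends to the quotient.

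\textbf{Well-definedness and injectivity.} These are the two directions of one computation. If $\rho_0,\rho_1\in\mathscr{A}^1(Z)$ satisfy $\rho_0\sim_M\rho_1$, then \Cref{Multiple Lemma} (applicable since $\rho_1$ is a separating function with $\rho_1(1,i)=1$) supplies $\lambda>0$ with $\rho_1(i,j)=\lambda\,\rho_0(i,j)$ for all distinct $i,j\in Z\setminus\{1\}$; reading this coordinatewise gives $S_{1,1}(M_{\rho_1})=\lambda\,S_{1,1}(M_{\rho_0})$, hence $\|S_{1,1}(M_{\rho_1})\|_1=\lambda\,\|S_{1,1}(M_{\rho_0})\|_1$, so the two normalized vectors coincide and $\Psi$ factors through $\faktor{\mathscr{A}^1(Z)}{\sim_M}$. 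Conversely, if $\Psi([\rho_0])=\Psi([\rho_1])$, then $S_{1,1}(M_{\rho_1})=\lambda\,S_{1,1}(M_{\rho_0})$ with $\lambda=\|S_{1,1}(M_{\rho_1})\|_1/\|S_{1,1}(M_{\rho_0})\|_1>0$, i.e. $\rho_1(i,j)=\lambda\,\rho_0(i,j)$ for distinct $i,j\neq 1$; since moreover $\rho_0(1,i)=\rho_1(1,i)=1$, the converse implication of \Cref{Multiple Lemma} (the routine cross-ratio verification mentioned in its proof) gives $\rho_0\sim_M\rho_1$, so $[\rho_0]=[\rho_1]$.

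\textbf{Surjectivity.} Given $x\in\mathring{\Delta}^{p_m}$, each coordinate is positive and, as there are $1+p_m=(m-1)(m-2)/2\ge 3$ positive coordinates summing to $1$, each is $<1$; hence $x\in(0,1)^{1+p_m}$. By the bijection \eqref{auxiliary bijection} there is a (unique) $\rho\in\mathscr{A}^1(Z)$ with $S_{1,1}(M_\rho)=x$, and then $\|S_{1,1}(M_\rho)\|_1=\|x\|_1=1$, so $\Psi([\rho])=x$. I do not expect a genuine obstacle anywhere: the whole statement is a repackaging of \Cref{Multiple Lemma} via \eqref{auxiliary bijection}, and the only point deserving a moment's care is that passing to Moebius equivalence classes inside $\mathscr{A}^1(Z)$ is exactly the relation ``equal up to a common positive scalar on the $(1,1)$-minor'', which is precisely what \Cref{Multiple Lemma} provides.
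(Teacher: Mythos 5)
Your proof is correct and follows essentially the same route as the paper: well-definedness and injectivity are exactly the two directions of \Cref{Multiple Lemma} (Moebius equivalence in $\mathscr{A}^1(Z)$ being the same as agreement of the $(1,1)$-minors up to a positive scalar), and surjectivity comes from the coordinate bijection \eqref{auxiliary bijection} applied to a point of $\mathring{\Delta}^{p_m}\subseteq(0,1)^{1+p_m}$. You merely spell out the normalization and the membership of $\Psi([\rho])$ in the open simplex in more detail than the paper does.
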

	\begin{proof}
		By \Cref{Multiple Lemma}, $\Psi$ is well-defined and one-one, that is for $\rho_1,\rho_2\in \mathscr{A}^1(Z)$, $\rho_1\sim_M \rho_2$ if and only if 
		\begin{equation*}
			\dfrac{S_{1,1}(M_{\rho_1})}{\|S_{1,1}(M_{\rho_1})\|_1}=\dfrac{S_{1,1}(M_{\rho_2})}{\|S_{1,1}(M_{\rho_2})\|_1}.
		\end{equation*} In view of the bijection defined in \ref{auxiliary bijection}, for any $x\in\mathring{\Delta}^{p_m}\subseteq (0,1)^{1+p_m}$, there exists $\rho\in \mathscr{A}^1(Z)$ such that $S_{1,1}(M_\rho)=x$. Hence, $\Psi$ is onto. 
	\end{proof}
	
	\begin{subrmk}\label{parametrization}
		As a consequence of \Cref{bijection2}, and the bijection \eqref{bijection1} we have the parametrization $\Phi\colon\TT(Z)\to \mathring{\Delta}^{p_m}$,
		\begin{equation}
			\begin{tikzcd}[column sep=15]
				{\TT(Z)} && {\left(\faktor{\mathscr{A}^1(Z)}{\sim_M}\right)} && {\mathring{\Delta}^{p_m}}
				\arrow["{\tilde{\FF}^{-1}}", from=1-1, to=1-3]
				\arrow["\Phi", curve={height=-40pt}, from=1-1, to=1-5]
				\arrow["\Psi", from=1-3, to=1-5]
			\end{tikzcd}
		\end{equation}
		Moreover, for $\rho\in \ANZ$ there exists unique representative $\tilde{\rho}\in \MM(\rho)\cap \mathscr{A}^1(Z)$ such that $\|S_{1,1}(M_{\tilde{\rho}})\|_1=1$. Also, we have $\Phi(\MM(\rho))= S_{1,1}(M_{\tilde{\rho}})$. 
		From the calculation \eqref{ray} and by \Cref{GMVT} we have explicit description of $\tilde{\rho}$,
		\begin{equation*}
			\tilde{\rho}(i,j)=\frac{\rho(i,j)}{\rho(1,i)\cdot\rho(1,j)}\Bigg(\sum_{1< k<l \le m}\frac{\rho(k,l)}{\rho(1,k)\cdot\rho(1,l)}\Bigg)^{-1}
		\end{equation*}
		for distinct $i,j\in Z\setminus\{1\}$ with $\tilde{\rho}(1,i)=1$ for $i\in Z\setminus\{1\}$, and the values are independent of the choice of the representative of the class $\MM(\rho)$. Thus, we have an explicit description of the parametrization $\Phi$.
	\end{subrmk} 
	
	Next we equip $\TT(Z)$ with a function $\ d_{\hbox{M\"ob}}\colon \TT(Z)\times\TT(Z) \to \R\ $ defined by,
	\begin{equation*}
		d_{\hbox{M\"ob}}(\MM(\rho),\MM(\rho'))\coloneqq \sup_{\xi,\eta,\xi',\eta'}\log \Bigg(\frac{[\xi,\xi',\eta,\eta']_\rho}{[\xi,\xi',\eta,\eta']_{\rho'}}\Bigg)	
	\end{equation*}
	where supremum is taken over distinct $\xi,\eta,\xi',\eta'$ in $Z$,	for $\MM(\rho),\MM(\rho')\in \TT(Z)$.
	\begin{sublemma}
		$d_{\hbox{M\"ob}}$ is a metric on $\TT(Z)$. 
	\end{sublemma}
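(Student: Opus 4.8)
The plan is to verify the four metric axioms in turn, each reducing to an elementary property of the cross-ratio \eqref{cross-ratio}. The one observation that does the real work is a reciprocity relation: interchanging $\xi$ with $\xi'$ (or $\eta$ with $\eta'$) in the quadruple replaces $[\xi,\xi',\eta,\eta']_\rho$ by its reciprocal, and hence replaces $\log\big([\xi,\xi',\eta,\eta']_\rho/[\xi,\xi',\eta,\eta']_{\rho'}\big)$ by its negative. Consequently the set of real numbers over which the supremum in the definition of $d_{\hbox{M\"ob}}$ is taken is invariant under negation, so
$$
d_{\hbox{M\"ob}}(\MM(\rho),\MM(\rho'))=\sup_{\xi,\xi',\eta,\eta'}\left|\log\frac{[\xi,\xi',\eta,\eta']_\rho}{[\xi,\xi',\eta,\eta']_{\rho'}}\right|\ \ge\ 0,
$$
which also makes symmetry transparent: interchanging $\rho$ and $\rho'$ negates every term, but leaves the negation-invariant set, and hence its supremum, unchanged. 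Thus $d_{\hbox{M\"ob}}$ is nonnegative and symmetric.

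Next I would check that $d_{\hbox{M\"ob}}$ is well defined on $\TT(Z)\times\TT(Z)$ and takes values in $\R$. Well-definedness is immediate from the very definition of Moebius equivalence \eqref{same cross-ratio}: if $\rho_1\sim_M\rho_2$ and $\rho_1'\sim_M\rho_2'$ then all four of the functions have the cross-ratios prescribed by their classes on every quadruple of distinct points, so the quantity depends only on $\MM(\rho)$ and $\MM(\rho')$. Finiteness is where we use that $Z$ is finite with $m\ge 4$: there are only finitely many ordered quadruples of distinct points of $Z$, and for each such quadruple all the $\rho$- and $\rho'$-values entering the cross-ratios are strictly positive (positivity of separating functions), so each logarithmic term is a finite real number and the supremum is in fact a maximum over a finite set, hence finite.

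It remains to establish the identity of indiscernibles and the triangle inequality. For the former, $d_{\hbox{M\"ob}}(\MM(\rho),\MM(\rho'))=0$ forces $\log\big([\xi,\xi',\eta,\eta']_\rho/[\xi,\xi',\eta,\eta']_{\rho'}\big)=0$, i.e. $[\xi,\xi',\eta,\eta']_\rho=[\xi,\xi',\eta,\eta']_{\rho'}$, for every quadruple of distinct points of $Z$; by definition this says $\rho\sim_M\rho'$, that is $\MM(\rho)=\MM(\rho')$, and the converse is trivial. For the triangle inequality, fix antipodal functions $\rho,\rho',\rho''$ and a quadruple $\xi,\xi',\eta,\eta'$ of distinct points; then
\begin{align*}
\log\frac{[\xi,\xi',\eta,\eta']_\rho}{[\xi,\xi',\eta,\eta']_{\rho''}}
&=\log\frac{[\xi,\xi',\eta,\eta']_\rho}{[\xi,\xi',\eta,\eta']_{\rho'}}+\log\frac{[\xi,\xi',\eta,\eta']_{\rho'}}{[\xi,\xi',\eta,\eta']_{\rho''}}\\
&\le d_{\hbox{M\"ob}}(\MM(\rho),\MM(\rho'))+d_{\hbox{M\"ob}}(\MM(\rho'),\MM(\rho'')),
\end{align*}
and taking the supremum over all such quadruples on the left-hand side gives $d_{\hbox{M\"ob}}(\MM(\rho),\MM(\rho''))\le d_{\hbox{M\"ob}}(\MM(\rho),\MM(\rho'))+d_{\hbox{M\"ob}}(\MM(\rho'),\MM(\rho''))$. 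No step presents a genuine obstacle; the only point requiring a moment's care is the symmetry of $d_{\hbox{M\"ob}}$, which is not visible from the asymmetric-looking definition and is precisely what the cross-ratio reciprocity in the first paragraph supplies.
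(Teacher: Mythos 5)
Your proof is correct and follows essentially the same route as the paper: the cross-ratio reciprocity $[\xi,\xi',\eta,\eta']_\rho\cdot[\xi,\xi',\eta',\eta]_\rho=1$ for nonnegativity and symmetry, the definition of Moebius equivalence for the vanishing case, and the telescoping logarithm for the triangle inequality. The extra checks of well-definedness on classes and finiteness are fine additions but routine; nothing differs in substance from the paper's argument.
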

	\begin{proof}
		For distinct $\xi,\eta,\xi',\eta'\in Z$ and $\rho\in \ANZ$, from \Cref{cross-ratio} we have,
		\begin{equation*}
			[\xi,\xi',\eta,\eta']_\rho\cdot [\xi,\xi',\eta',\eta]_\rho=1.
		\end{equation*} 
		From this it follows that $d_{\hbox{M\"ob}}$ is non-negative and symmetric. 
		Observe that $d_{\hbox{M\"ob}}(\MM(\rho),\MM(\rho'))=0$ if and only if $\rho\sim_M \rho'$.
		Also, for all distinct $\xi,\xi',\eta,\eta'\in Z$ and $\rho_1,\rho_2,\rho_3\in \ANZ$,
		\begin{equation*}
			\begin{split}
				\log \Bigg(\frac{[\xi,\xi',\eta,\eta']_{\rho_3}}{[\xi,\xi',\eta,\eta']_{\rho_1}}\Bigg)&=\log \Bigg(\frac{[\xi,\xi',\eta,\eta']_{\rho_3}}{[\xi,\xi',\eta,\eta']_{\rho_2}}\Bigg)+\log \Bigg(\frac{[\xi,\xi',\eta,\eta']_{\rho_2}}{[\xi,\xi',\eta,\eta']_{\rho_1}}\Bigg)\\
				&\le d_{\hbox{M\"ob}}(\MM(\rho_3),\MM(\rho_2))+d_{\hbox{M\"ob}}(\MM(\rho_2),\MM(\rho_1)).\\
			\end{split}		
		\end{equation*}
		Thus, $d_{\hbox{M\"ob}}$ satisfies the triangle inequality and, therefore, is a metric on $\TT(Z)$.
	\end{proof}
	
	\begin{sublemma}\label{geodesic}
		Given $Z$ finite set. $(\TT(Z),d_{\hbox{M\"ob}})$ is a geodesic, geodesically complete metric space.
	\end{sublemma}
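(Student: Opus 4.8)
The plan is to identify $(\TT(Z),d_{\hbox{M\"ob}})$ isometrically with a finite-dimensional real normed vector space and then invoke the elementary fact that such spaces are geodesic and geodesically complete; that $d_{\hbox{M\"ob}}$ is a metric was already checked in the preceding two lemmas.

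First I would pass to logarithmic coordinates. Let $V$ be the real vector space of symmetric functions $f\colon(Z\times Z)\setminus\Delta\to\R$ (of dimension $\binom{m}{2}$), and let $W\subset V$ be the subspace of functions of the shape $(\xi,\eta)\mapsto\tfrac12(\psi(\xi)+\psi(\eta))$ with $\psi\colon Z\to\R$. By \Cref{GMVT}, two antipodal functions $\rho,\rho'$ on $Z$ are Moebius equivalent exactly when $\log\rho-\log\rho'\in W$, so $\rho\mapsto[\log\rho]$ descends to a well-defined injection $\iota\colon\TT(Z)\hookrightarrow V/W$. Expanding the cross-ratio (from \eqref{cross-ratio}), for antipodal $\rho,\rho'$ and distinct $\xi,\xi',\eta,\eta'\in Z$ one has
$$
\log\frac{[\xi,\xi',\eta,\eta']_\rho}{[\xi,\xi',\eta,\eta']_{\rho'}}=g(\xi,\eta)+g(\xi',\eta')-g(\xi,\eta')-g(\xi',\eta),\qquad g\coloneqq\log\rho-\log\rho',
$$
hence $d_{\hbox{M\"ob}}(\MM(\rho),\MM(\rho'))=Q(g)$, where for $f\in V$ I put $Q(f)\coloneqq\max\{\,f(\xi,\eta)+f(\xi',\eta')-f(\xi,\eta')-f(\xi',\eta)\mid\xi,\xi',\eta,\eta'\in Z\text{ distinct}\,\}$; this maximum exists ($Z$ is finite), is nonnegative and unchanged under $f\mapsto-f$ (permuting the quadruple negates the bracket), and $Q$ is visibly a seminorm on $V$. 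One checks that $\ker Q=W$: if all cross-differences of $f$ vanish, then fixing two base points in $Z$ and using $m\ge4$ one solves a small linear system to write $f(\xi,\eta)=g(\xi)+g(\eta)$ for a suitable $g\colon Z\to\R$. Thus $Q$ descends to a genuine norm on $V/W$, for which $\iota$ is an isometric embedding.

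Next I would show $\iota$ is onto. Given $[f]\in V/W$, set $\sigma\coloneqq e^{f}$ (with $\sigma(\xi,\xi)=0$); this is a separating function on the finite set $Z$. Applying \Cref{GMVT} with the weight $\phi(1)=1$, $\phi(i)=\sigma(1,i)^{-2}$ for $i\ne1$, produces a Moebius-equivalent separating function $\hat\sigma$ with $\hat\sigma(1,i)=1$ for all $i\ne1$; the last assertion of \Cref{Multiple Lemma} then yields $\rho'\in\mathscr{A}^1(Z)\subseteq\ANZ$ with $\rho'\in\mathcal{UM}(Z,\hat\sigma)$, so $\rho'\sim_M\sigma$ and $\iota(\MM(\rho'))=[\log\rho']=[f]$. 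Hence $\iota\colon\TT(Z)\to V/W$ is a bijective isometry.

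Finally, $(V/W,Q)$ is a normed real vector space (of dimension $p_m=m(m-3)/2$), so it is geodesic — for $u\ne v$ the path $s\mapsto u+\tfrac{s}{Q(v-u)}(v-u)$, $s\in[0,Q(v-u)]$, is an isometric embedding joining $u$ to $v$ — and geodesically complete, since any geodesic $\gamma\colon[0,\ell]\to V/W$ extends to the bi-infinite geodesic $s\mapsto\gamma(0)+s\,\tfrac{\gamma(\ell)-\gamma(0)}{\ell}$ (a short triangle-inequality computation shows this is an isometric embedding of $\R$ restricting to $\gamma$ on $[0,\ell]$). Transporting these geodesics through $\iota^{-1}$ proves the lemma; concretely, the geodesic of $\TT(Z)$ joining $\MM(\rho_0)$ to $\MM(\rho_1)$ is $t\mapsto\MM(\rho_t)$, where $\rho_t$ is an antipodal representative of the class of the geometric interpolant $e^{(1-t)\log\rho_0+t\log\rho_1}$, extended to all $t\in\R$. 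The one step that needs genuine care is the surjectivity of $\iota$, i.e. that every separating function on the finite set $Z$ is Moebius equivalent to an antipodal one; this is where \Cref{Multiple Lemma} does the real work.
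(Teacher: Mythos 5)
Your route is genuinely different from the paper's and is essentially sound. The paper argues by hand: it takes the normalized representatives of \Cref{parametrization} in $\mathscr{A}^1(Z)$, checks directly that the geometric interpolant $\rho_t=\rho_1^{t/d}\cdot\rho_0^{1-t/d}$ has cross-ratio ratios behaving like a $(t/d)$-th power, so that $t\mapsto[\rho_t]_M$ is a geodesic, and then invokes the last part of \Cref{Multiple Lemma} to replace $\rho_t$ by a Moebius-equivalent antipodal function when $t\notin[0,1]$, giving the bi-infinite extension. You instead transport the whole space: the logarithm identifies $(\TT(Z),d_{\hbox{M\"ob}})$ isometrically with the quotient $(V/W,Q)$ of symmetric kernels modulo additively decomposable ones, where $Q$ is the maximum of the cross-differences; \Cref{GMVT} gives $\ker Q=W$ and well-definedness, and surjectivity is precisely the fact that every separating function on a finite $Z$ is Moebius equivalent to an antipodal one, which you correctly obtain from \Cref{GMVT} together with the last assertion of \Cref{Multiple Lemma} --- the same non-formal input the paper uses in its extension step. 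Pulling straight lines back through your isometry recovers exactly the paper's interpolating geodesics, but your identification buys more: it yields (once the point below is fixed) that \emph{every} geodesic of $\TT(Z)$ extends to a line, whereas the paper's proof only exhibits extensions of the particular geodesics it constructs, and it realizes $\TT(Z)$ as a $p_m$-dimensional normed space, in the spirit of \Cref{big-Teich theorem}.

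One step is wrong as written, though easily repaired. Your proposed extension $s\mapsto\gamma(0)+s\,\frac{\gamma(\ell)-\gamma(0)}{\ell}$ of a geodesic $\gamma\colon[0,\ell]\to V/W$ does \emph{not} restrict to $\gamma$ on $[0,\ell]$ unless $\gamma$ is affine, and $Q$ is a maximum of finitely many linear functionals, hence a polyhedral, non-strictly-convex norm (note $p_m\ge 2$), so non-affine geodesic segments do exist and your formula fails to extend them. The fix is to keep $\gamma$ on $[0,\ell]$ and continue affinely in the chord direction $u=(\gamma(\ell)-\gamma(0))/\ell$ beyond both endpoints: for $t\ge\ell$ one gets $Q(\gamma(t)-\gamma(0))=t$ and $Q(\gamma(t)-\gamma(s))\le (\ell-s)+(t-\ell)$, so the reverse triangle inequality forces $Q(\gamma(t)-\gamma(s))=t-s$, and symmetrically on the left; the concatenation is then a bi-infinite geodesic genuinely extending $\gamma$. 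With this correction your argument is complete.
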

	\begin{proof}
		Let $\MM(\rho_0),\MM(\rho_1)\in \TT(Z)$, where $\rho_0$, and $\rho_1$ are representatives as specified in \Cref{parametrization}. Let $d=d_{\hbox{M\"ob}}(\MM(\rho_1),\MM(\rho_0))$.  Define the path $\gamma:[0,d]\to \TT(Z)$ by convex combination,
		\begin{equation*}
			\gamma(t)\coloneqq [(\rho_1)^{t/d}\cdot(\rho_0)^{(1-t/d)}]_M.
		\end{equation*}
		Note that for $t\in[0,1]$ we have $\rho_t\coloneqq (\rho_1)^{t/d}\cdot(\rho_0)^{(1-t/d)} \in \mathscr{A}^1(Z)$.
		Observe that for distinct $\xi,\xi',\eta,\eta'\in Z$
		\begin{equation*}
			\frac{[\xi,\xi',\eta,\eta']_{\rho_t}}{[\xi,\xi',\eta,\eta']_{\rho_0}}=\Bigg(\frac{[\xi,\xi',\eta,\eta']_{\rho_1}}{[\xi,\xi',\eta,\eta']_{\rho_0}}\Bigg)^{t/d},
		\end{equation*}
		thus $d_{\hbox{M\"ob}}(\gamma(0),\gamma(t))=t$. Thus, $\gamma$ is a geodesic joining $\MM(\rho_0)$ to $\MM(\rho_1)$. 
		
		Furthermore, for any separating function $\rho_0'$ and $\rho_1'$ Moebius equivalent to $\rho_0$ and $\rho_1$ respectively then for all $t\in\R$ the separating function
		\begin{equation*}
			\rho_t'\coloneqq (\rho_1')^{t/d}\cdot(\rho_0')^{(1-t/d)}
		\end{equation*}is Moebius equivalent to the separating $\rho_t$ as defined above. Now $\rho_t(1,i)=1$ for all $i\in Z\setminus\{1\}$ and $t\in \R$. However, $\rho_t$ need not be an antipodal for $t\in \R\setminus[0,1]$. By \Cref{Multiple Lemma} there exists $\tilde{\rho_t}\in \mathscr{A}^1(Z)$ such that $\rho_t$ is Moebius equivalent to $\tilde{\rho_t}$. So we can extend $\gamma:\R\to \TT(Z)$ with $\gamma(t)=[\tilde{\rho_t}]_M$ for $t\in \R\setminus[0,1]$. Let $t>s$ in $\R$, for distinct $\xi,\xi',\eta,\eta'\in Z$  we have,
		\begin{equation*}
			\frac{[\xi,\xi',\eta,\eta']_{\rho_t}}{[\xi,\xi',\eta,\eta']_{\rho_s}}=\frac{([\xi,\xi',\eta,\eta']_{\rho_1})^{t/d}\cdot([\xi,\xi',\eta,\eta']_{\rho_0})^{(1-t/d)}}{([\xi,\xi',\eta,\eta']_{\rho_1})^{s/d}\cdot([\xi,\xi',\eta,\eta']_{\rho_0})^{(1-s/d)}}=\Bigg(	\frac{[\xi,\xi',\eta,\eta']_{\rho_1}}{[\xi,\xi',\eta,\eta']_{\rho_0}}\Bigg)^{(t-s)/d}
		\end{equation*}
		which implies $d_{\hbox{M\"ob}}(\gamma(t),\gamma(s))=(t-s)$. Thus, $(\TT(Z),d_{\hbox{M\"ob}})$ is geodesically complete.
	\end{proof}
	
	\begin{subrmk}
		We have defined the big-Teichmuller space for finite set $Z$. One can naturally extend the definition of big-Teichmuller spaces for any compact metrizable space $Z$ that admits antipodal functions. Since the argument does not depend on the finiteness of $Z$, it follows that $(\TT(Z), d_{\text{M\"ob}})$ is a metric space. However, whether this space is geodesic in general remains unclear. The main obstacle to this result is that it is not definitively known whether every separating function on any $Z$ has a Moebius equivalent antipodal function; for $Z$ finite, this is indeed true.
	\end{subrmk}
	Now we prove the following:
	\begin{subtheorem}\label{big-Teich theorem}
		Given a finite set $Z$ of cardinality $m$, the big-Teichmuller space $(\TT(Z),d_{\hbox{M\"ob}})$ is a geodesic metric space. Moreover, it is geodesically complete and homeomorphic to the open simplex of dimension $$p_m\coloneqq \frac{m(m-3)}{2}$$ equipped with topology of $\R^{1+p_m}$.     
	\end{subtheorem}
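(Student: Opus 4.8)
The plan is to observe that almost everything in the statement is already in hand, so that only one claim remains to be established. By \Cref{geodesic}, $(\TT(Z),d_{\hbox{M\"ob}})$ is a geodesic, geodesically complete metric space, and by \Cref{parametrization} the map $\Phi\colon\TT(Z)\to\mathring{\Delta}^{p_m}$ is a well-defined bijection with $\Phi(\MM(\rho))=S_{1,1}(M_{\tilde\rho})=(\tilde\rho(i,j))_{1<i<j\le m}$, where $\tilde\rho$ is the unique representative of $\MM(\rho)$ in $\mathscr{A}^1(Z)$ normalized by $\sum_{1<i<j\le m}\tilde\rho(i,j)=1$; in particular $\tilde\rho(1,i)=1$ for all $i\ne 1$. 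Thus the whole task is to show that $\Phi$ is a homeomorphism when $\mathring{\Delta}^{p_m}\subset\R^{1+p_m}$ carries its Euclidean subspace topology. The strategy is to rewrite $d_{\hbox{M\"ob}}$ in these coordinates and prove continuity of $\Phi$ and of $\Phi^{-1}$ separately. Throughout we use that, because the defining supremum of $d_{\hbox{M\"ob}}$ runs over the finite set of ordered quadruples of distinct points of $Z$ and reversing the last two entries inverts a cross-ratio, $d_{\hbox{M\"ob}}(\MM(\rho),\MM(\rho'))\to 0$ is equivalent to $[q]_{\tilde\rho}/[q]_{\tilde\rho'}\to 1$ for every such quadruple $q$, where $[q]_{\tilde\rho}$ is the corresponding cross-ratio of the normalized representative (these depend only on the Moebius class).

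Continuity of $\Phi^{-1}$ is immediate. Given $x\in\mathring{\Delta}^{p_m}$, the entries of the matrix $M_{\tilde\rho}$ of the associated normalized representative equal $1$ (in the first row and column) or a coordinate of $x$ (lying in $(0,1)$), hence are positive and depend continuously on $x$; therefore every cross-ratio $[q]_{\tilde\rho}$, being a ratio of products of these entries, is a positive continuous function of $x$. If $x'\to x$ in $\mathring{\Delta}^{p_m}$ then each $[q]_{\tilde\rho'}\to[q]_{\tilde\rho}$, so $[q]_{\tilde\rho'}/[q]_{\tilde\rho}\to 1$ for all $q$ and $d_{\hbox{M\"ob}}(\Phi^{-1}(x'),\Phi^{-1}(x))\to 0$.

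The crux is continuity of $\Phi$, i.e.\ recovering the simplex coordinates continuously from cross-ratios. The key identity, valid for distinct $a,b,c\in Z\setminus\{1\}$ (such triples exist since $m\ge 4$) and using $\tilde\rho(1,\cdot)\equiv 1$, is
\[
[a,1,b,c]_{\tilde\rho}=\frac{\tilde\rho(a,b)\,\tilde\rho(1,c)}{\tilde\rho(a,c)\,\tilde\rho(1,b)}=\frac{\tilde\rho(a,b)}{\tilde\rho(a,c)}.
\]
Using this with the anchor pair $(2,3)$ one obtains, for every $1<i<j\le m$, an identity $\tilde\rho(i,j)=\tilde\rho(2,3)\cdot c_{ij}$, where $c_{23}=1$ and each $c_{ij}>0$ is an explicit product of at most two of the cross-ratios $[a,1,b,c]_{\tilde\rho}$. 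Summing over all pairs and using the normalization gives $\tilde\rho(2,3)=\big(\sum_{1<k<l\le m}c_{kl}\big)^{-1}$, so
\[
\Phi(\MM(\rho))_{ij}=\tilde\rho(i,j)=\frac{c_{ij}}{\sum_{1<k<l\le m}c_{kl}},
\]
which exhibits each coordinate of $\Phi(\MM(\rho))$ as a continuous function of the finite family of cross-ratios $[a,1,b,c]_{\tilde\rho}$ (the denominator is a sum of positive numbers, hence never vanishes). Consequently, if $d_{\hbox{M\"ob}}(\MM(\rho_n),\MM(\rho))\to 0$, then $[q]_{\tilde\rho_n}\to[q]_{\tilde\rho}$ for every quadruple $q$, and feeding this into the last display gives $\Phi(\MM(\rho_n))\to\Phi(\MM(\rho))$ in $\R^{1+p_m}$. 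Hence $\Phi$ is continuous, so it is a homeomorphism, and combined with \Cref{geodesic} this proves the theorem.

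I expect the only genuinely delicate point to be the coordinate-recovery identity just used — this is precisely why it matters to work with the normalized representative satisfying $\tilde\rho(1,i)=1$, since that normalization turns the cross-ratios $[a,1,b,c]_{\tilde\rho}$ into ratios of the simplex coordinates and so lets $d_{\hbox{M\"ob}}$-smallness control Euclidean distance between parameters. Everything else is routine: continuity of rational functions with non-vanishing denominators, and continuity of a supremum taken over a fixed finite index set.
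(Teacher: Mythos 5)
Your proposal is correct, and its overall architecture matches the paper's: quote \Cref{geodesic} for the geodesic and geodesically complete claims, quote \Cref{parametrization} for the bijection $\Phi$, prove continuity of $\Phi^{-1}$ by noting that the entries of the normalized matrix (hence all cross-ratios) depend continuously on the simplex point, and then handle continuity of $\Phi$. The only real divergence is in that last step. The paper argues by compactness: it takes a sequence $\MM(\rho_n)\to\MM(\rho_0)$, uses that the normalized entries lie in $[0,1]$ to extract a subsequential limit $\alpha$, shows $\alpha$ has strictly positive off-diagonal entries via the cross-ratio products $[1,i,k,j]_{\rho_n}\cdot[1,i,l,k]_{\rho_n}$ (after locating one pair with $\alpha(k,l)>0$ from the normalization $\|S_{1,1}\|_1=1$), and then invokes uniqueness of the normalized representative plus a subsequence argument. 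You instead make the same cross-ratio identity do all the work up front: since $\tilde\rho(1,\cdot)\equiv 1$, the ratios $\tilde\rho(i,j)/\tilde\rho(2,3)$ are products of at most two cross-ratios of the class (your case analysis — shared index versus disjoint pairs — checks out, the disjoint case only arising for $m\ge 6$), and the normalization then yields the closed formula $\tilde\rho(i,j)=c_{ij}/\sum_{k<l}c_{kl}$ with a never-vanishing denominator, so each coordinate of $\Phi$ is manifestly a continuous function of the finitely many cross-ratios, which in turn converge exactly when $d_{\hbox{M\"ob}}\to 0$ (both you and the paper use that this supremum runs over a finite index set and that swapping the last two entries inverts the cross-ratio). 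Your route buys a cleaner, quantitative statement — an explicit continuous inverse of the parametrization in terms of cross-ratio data, with no subsequence extraction — while the paper's compactness argument requires less bookkeeping with identities and leans on the uniqueness of the normalized representative; both are complete proofs.
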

	\begin{proof}
		We know from \Cref{parametrization} that, there is a parametrization $$\Phi:\TT(Z) \to \mathring{\Delta}^{p_m}.$$ We have already seen that $(\TT(Z),d_{\hbox{M\"ob}})$ is a geodesic, geodesically complete metric space in \Cref{geodesic}. Now, we will demonstrate that $\Phi$ is indeed, a homeomorphism between $(\TT(Z), d_{\text{M\"ob}})$ and the open simplex $\mathring{\Delta}^{p_m} \subseteq \R^{p_m}$. 
		
		Observe that, given $\MM(\rho_n)\in \TT(Z)$ a sequence, $$\MM(\rho_n)\xrightarrow{n\to\infty} \MM(\rho_0)\in\TT(Z)$$ if and only if for distinct $\xi,\xi',\eta,\eta'\in Z$ 	the cross-ratios $[\xi,\xi',\eta,\eta']_{\rho_n}\to [\xi,\xi',\eta,\eta']_{\rho_0}$.
		
		Let $x_n\in\mathring{\Delta}^{p_m}$ such that $x_n\to x_0\in \mathring{\Delta}^{p_m}$. There exists unique $\rho_0\in \mathscr{A}^1(Z)$ such that $$\Phi(\MM(\rho_0))=S_{1,1}(M_{\rho_0})=x_0.$$ Similarly, for each $n$, there exists unique $\rho_n\in \mathscr{A}^1(Z)$ such that $$\Phi(\MM(\rho_n))=S_{1,1}(M_{\rho_n})=x_n$$ (see \Cref{parametrization}). Now $x_n\to x_0$ implies $M_{\rho_n}\to M_{\rho_0}$ and thus the cross-ratios
		$$[\xi,\xi',\eta,\eta']_{\rho_n}\to [\xi,\xi',\eta,\eta']_{\rho_0}$$ for distinct $\xi,\xi',\eta,\eta'\in Z$. Thus, $\Phi^{-1}$ is continuous. 
		
		Now suppose $\MM(\rho_n)\in \TT(Z)$ a sequence, such that $\MM(\rho_n)\to \MM(\rho_0)\in\TT(Z)$. Representatives $\rho_0$ and $\rho_n$'s are chosen so that \begin{equation*}
			\Phi(\MM(\rho_0))=S_{1,1}(M_{\rho_0})\ \hbox{ and }\ \Phi(\MM(\rho_n))=S_{1,1}(M_{\rho_n})
		\end{equation*} respectively, as specified in \Cref{parametrization}.
		We want to show $S_{1,1}(\rho_n)\to S_{1,1}(\rho_0)$ which is same as showing $\rho_n(i,j)\to \rho_0(i,j)$, for $i,j\in Z$. For each $i,j\in Z$ we have $\rho_n(i,j)\in[0,1]$, hence a bounded sequence. Passing to a subsequence we let $\rho_n(i,j)\to \alpha(i,j)\in [0,1]$. Note that $\alpha(i,i)=0$ and $\alpha(1,j)=1$ for $j\in Z\setminus\{1\}$. Moreover, 
		\begin{equation*}
			1=\|S_{1,1}(M_{\rho_n})\|_1\to \sum_{1< i<j \le m} \alpha(i,j)=1
		\end{equation*}
		Hence there exists distinct $l,k\in Z\setminus \{1\}$, so that $\alpha(l,k)>0$. For distinct $i,j\in Z\setminus\{1\}$ such that $(i,j)\not\in\{(k,l),(l,k)\}$ we have
		\begin{equation*}
			\begin{split}
				\frac{\alpha(i,j)}{\alpha(k,l)}=\lim_{n\to\infty} \frac{\rho_n(i,j)}{\rho_n(k,l)}=&\lim_{n\to\infty}\  [1,i,k,j]_{\rho_n}\cdot [1,i,l,k]_{\rho_n}\\
				=&[1,i,k,j]_{\rho_0}\cdot [1,i,l,k]_{\rho_0}>0
			\end{split}
		\end{equation*}
		Thus, $\alpha(i,j) > 0$ for distinct $i, j \in Z \setminus \{1\}$. Consequently, $\alpha \in \mathscr{A}^1(Z)$, and $\alpha \sim_M \rho_0$. Note that we also have $\|S_{1,1}(M_\alpha)\|_1 = 1$. By the uniqueness of the representative as per \Cref{parametrization}, we conclude that $\alpha = \rho_0$. Finally, observe that for any subsequence of $\rho_n$, we can extract a further subsequence such that $\rho_n$ converges to $\rho_0$. Hence, $\rho_n$ converges to $\rho_0$, which implies that $\Phi$ is continuous.
	\end{proof}
	\medskip
	
	\subsection{Teichmuller space and Mapping class group}
	Motivated by the classical definition of the Teichmuller space for Riemann surfaces, this section defines the Teichmuller space for the Moebius space $\MM(\rho) \in \TT(Z)$. Before proceeding, observe that since $Z$ is finite, any homeomorphism
	\begin{equation*}
		H \colon \MM(\rho_1) = \MM(Z,\rho_1) \to \MM(Z,\rho_2) = \MM(\rho_2)
	\end{equation*}
	naturally extends to a homeomorphism $\tilde{H}\colon \overline{\MM(\rho_1)}\to \overline{\MM(\rho_2)}$ between the Gromov compactifications, where the Gromov boundary is naturally identified with $Z$ (by \Cref{gromov ip}). Also, the extension $h\coloneqq\tilde{H}|_Z \colon (Z,\rho_1) \to (Z,\rho_2)$ is a bijection between the Gromov boundaries. Throughout this section, we will consider homeomorphisms between Moebius spaces of $Z$, denoted by uppercase letters, while maps between the Gromov boundaries $Z$ will be denoted by lowercase letters.
	
	Consider the set of markings of $\MM(\rho)$
	\begin{equation*}
		\mathcal S\big(\MM(\rho)\big)\coloneqq \bigg\{\big(\MM(\rho),F\big)\ |\ F\colon \MM(\rho_0)\xrightarrow{\mathrm{Homeo}} \MM(\rho)\ \bigg\}
	\end{equation*}
	define an equivalence relation $``\sim"$ on $\mathcal S(\MM(\rho))$, where $(\MM(\rho_1),F_1) \sim (\MM(\rho_2),F_2)$
	if and only if 
	\begin{equation}
		\begin{tikzcd}[sep=normal]
			& {\MM(\rho_1)} &&& Z \\
			{\MM(\rho_0)} &&\hbox{with boundary maps}& Z \\
			& {\MM(\rho_2)} &&& Z
			\arrow["\exists\ G", dashed, from=1-2, to=3-2]
			\arrow["g", from=1-5, to=3-5]
			\arrow["{F_1}", from=2-1, to=1-2]
			\arrow["{F_2}"', from=2-1, to=3-2]
			\arrow["{f_1}", from=2-4, to=1-5]
			\arrow["{f_2}"', from=2-4, to=3-5]
		\end{tikzcd}
	\end{equation}
	there exists an isometry $ G \colon \MM(Z,\rho_1) \to \MM(Z,\rho_2) $ such that the boundary maps satisfy $	g=\left(f_2\circ f_1^{-1}\right).$
	
	\medskip
	
	\noindent Now we define the Teichmuller space for a Moebius space $\MM(Z,\rho)$.
	
	\begin{definition}[{\bf Teichmuller space}]\label{Teichmuller space}
		Given $\MM(Z,\rho_0)=\MM(\rho_0)\in \TT(Z)$ we define the Teichmuller space,
		\begin{equation}
			\TT(\MM(\rho_0))\coloneqq \faktor{\mathcal S\big(\MM(\rho_0)\big)}{\sim}
		\end{equation}
		set of equivalence classes in $\mathcal{S}\big(\MM(\rho_0)\big)$ given by the equivalence relation ``$\sim$" defined above.
	\end{definition}
	Here, we note a few observations. 
	\begin{enumerate}[(i)]
		\item Suppose $(\MM(\rho), F_1)$ and $(\MM(\rho), F_2)$ are such that the homeomorphisms $F_1\colon \MM(\rho_0) \to \MM(\rho)$ and $F_2\colon \MM(\rho_0) \to \MM(\rho)$ have $f_1 = f_2$, meaning the boundary maps are the same. Then $(\MM(\rho), F_1) \sim (\MM(\rho), F_2)$. This is because we can take $G\colon \MM(\rho) \to \MM(\rho)$ to be the identity map.
		\\  
		
		\item For every $[(\MM(\rho),F)]\in \TT([\rho_0])$ there exists $(\MM(\tilde{\rho}),\tilde{F})\in [(\MM(\rho),F)]$ such that the extension of $\tilde{F}$ to the Gromov boundary, $\tilde{f}$, is the identity map of $Z$. Let $f\colon Z\to Z$ be the extension of $F$. Take $\tilde{\rho}(\cdot,\cdot)=\rho(f(\cdot),f(\cdot))$ and set $$\tilde{F}=((f^{-1})_*\circ F)\colon \MM(\rho_0)\to \MM(\tilde{\rho}).$$
		Here $f^{-1} \colon (Z,\rho) \to (Z,\tilde{\rho})$ is a Moebius homeomorphism, and therefore the push-forward $G=(f^{-1})_*:\MM(Z,\rho)\to \MM(Z,\tilde{\rho})$ is an isometry, with boundary map $(f^{-1})$; see \cite[Proposition 6.4]{biswas2024quasi}. Thus, we have $(\MM(\tilde{\rho}),\tilde{F})\sim(\MM(\rho),F)$.\\
		
		\item Suppose $(\MM(\rho_1), F_1) \sim (\MM(\rho_1), F_2)$ are such that the homeomorphisms $F_1\colon \MM(\rho_0) \to \MM(\rho)$ and $F_2\colon \MM(\rho_0) \to \MM(\rho)$ have $f_1 = f_2$, meaning the same boundary maps. Then there exists an isometry $G:\MM(\rho_1)\to\MM(\rho_2)$ with the boundary map $$g = \big(f_1 \circ f_1^{-1}\big) = id|_Z,$$ and by \cite[Theorem 1.5]{biswas2021quasi} we have $G=g_*$. Therefore, $\MM(\rho_1)=\MM(\rho_2)$ with $G$ identity map.\\

		\item Suppose $\MM(\rho_1),\ \MM(\rho_2)\in \TT(Z)$ be such that they are homeomorphic via a homeomorphism $H_0\colon\MM(\rho_1)\to \MM(\rho_2)$. Consider the map,
		\begin{equation}\label{identification of Teichmuller space}
			\begin{split}
				\TT(\MM(\rho_1))&\xrightarrow{H_*} \quad\TT(\MM(\rho_2))\\
				[(\MM(\rho),F)]&\mapsto\ [(\MM(\rho),F\circ H_0^{-1})].
			\end{split}
		\end{equation} It is straightforward to see that the map $H_*$ is a well-defined bijection. Thus, the two Teichmuller spaces $\TT(\MM(\rho_1))$ and $\TT(\MM(\rho_2))$ are identified via $H_*$.\\
	\end{enumerate} 
	
	\noindent Next, we encounter the following equivalence relation. We say two Moebius spaces $\MM(\rho_0),\MM(\rho_1)\in \TT(Z)$ are {\it homeomorphism equivalent} if there exists a homeomorphism $F\colon \MM(Z, \rho_0) \to \MM(Z, \rho_1)$ with the boundary map $f\colon Z\to Z$ is identity, denote by $\MM(\rho_0)\sim_H\MM(\rho_1)$. The equivalence class is denoted by $[\MM(\rho_1)]_H$,
	\begin{equation*}
		[\MM(\rho_0)]_H\coloneqq \big\{\ \MM(\rho)\in \TT(Z)\ |\ \hbox{ there exists } F\colon \MM(\rho_0)\xrightarrow{\mathrm{Homeo}} \MM(\rho),\hbox{ with } (f\colon Z\to Z\big)=id|_Z\}.
	\end{equation*}
	
	There is a one-one correspondence between $[\MM(\rho_0)]_H$ and $\TT(\MM(\rho_0))$, given by the map,
	\begin{equation*}
		\begin{split}
			[\MM(\rho_0)]_H &\to\ \TT\big(\MM(\rho_0)\big)\\
			\MM(\rho) \quad &\mapsto [(\MM(\rho),F)]
		\end{split}
	\end{equation*}
	where $F \colon \MM(\rho_0) \to \MM(\rho)$ is a homeomorphism with boundary map $f\colon Z \to Z$ is identity. From the first observation, we see that the map is well-defined. It is also onto and one-to-one, as evident from the second and third observations.
	
	\begin{definition}[Mapping class group]
		Given $\MM(\rho)\in\TT(Z)$, consider the homeomorphism group
		\begin{equation*}
			\mathrm{Homeo}(\MM(\rho))\coloneqq\{\ F\colon \MM(\rho)\xrightarrow{\mathrm{Homeo}}\MM(\rho)\ \}
		\end{equation*} and the normal subgroup
		\begin{equation*}
			\mathrm{Homeo}_0(\MM(\rho))\coloneqq\{\ F\in \mathrm{Homeo}(\MM(\rho))\ | \hbox{ the boundary map } (f\colon Z\to Z) \hbox{ is identity }\}.
		\end{equation*}
		The mapping class group for $\MM(\rho)$ is defined as the quotient group
		\begin{equation*}
			\mathrm{MCG}\big(\MM(\rho)\big)\coloneqq \dfrac{\mathrm{Homeo}(\MM(\rho))}{\mathrm{Homeo}_0(\MM(\rho))}.
		\end{equation*}
	\end{definition}
	\noindent Observe that there exists a natural homomorphism from $\mathrm{Homeo}(\MM(\rho))$ to $\mathrm{Perm}(Z)$, the group of bijections (or permutations) of $Z$,
	\begin{equation*}
		\begin{split}
			\mathrm{Homeo}\big(\MM(\rho)\big)&\xrightarrow{\Lambda} \mathrm{Perm}(Z)\cong S_m\\
			F \quad &\mapsto \quad f
		\end{split}
	\end{equation*}
	where again $f$ is the boundary extension of the homeomorphism $F$. We have $Ker(\Lambda)= \mathrm{Homeo}_0(\MM(\rho))$ thus $\mathrm{MCG}(\MM(\rho))$ is isomorphic to a subgroup of the $\mathrm{Perm}(Z)\cong S_m$.
	\medskip
	
	For $\rho_0\in \ANZ$, we define a group action of $\mathrm{MCG}(\MM(\rho_0))$ on $\TT(\MM(\rho_0))$,
	\begin{equation*}
		\begin{split}
			\mathrm{MCG}(\MM(\rho_0))\times \TT(\MM(\rho_0)) &\to \TT(\MM(\rho_0))\\
			(\ [H]\ ,\ [(\MM(\rho),F)]\ ) &\mapsto [(\MM(\rho),F\circ H)]
		\end{split}
	\end{equation*}
	It is straightforward to see that the action is well-defined independent of the choice of representatives.
	
	\bigskip
	
	\section{Maximal Gromov hyperbolic space with $4$ point boundary}\label{4 points}
	
	This section will discuss the different polyhedral structures of maximal Gromov Hyperbolic spaces when they have $4$ points on the Gromov boundary. Specifically, we will study the Moebius spaces $\MM(Z,\rho)$ associated when antipodal space $(Z,\rho)$ contains $4$ elements. Furthermore, we will comment on the big-Teichmuller space and Teichmuller space in this case.

	Let $Z=\{1,2,3,4\}$. Then the dimension of the big-Teichmuller space $\TT(Z)$ is $p_4=2$ (by \Cref{big-Teich theorem}) and is parametrized by open $2$-simplex $$\mathring{\Delta}^2=\{(\mu,\lambda,\nu)\in \R^3\ |\ \mu,\lambda,\nu>0,\ \mu+\lambda+\nu=1\ \}$$
	via the parametrization $\Phi\colon \TT(Z)\to \mathring{\Delta}^2$. 
	
	For $\MM(\rho)\in \TT(Z)$ where $\rho$ is the unique representative as in \Cref{parametrization} then the corresponding matrix is of the form
	\begin{equation*}
		M_\rho=\begin{bmatrix}
			0 & 1 & 1 & 1 \\
			1& 0 & \mu & \lambda \\
			1 & \mu & 0 & \nu \\
			1 & \lambda & \nu  & 0
		\end{bmatrix}
	\end{equation*} where $\mu=\rho(2,3),\ \lambda=\rho(2,4),\ \nu=\rho(3,4)$, $\mu+\lambda+\nu=1$ and $\Phi(\MM(\rho))=(\mu,\lambda,\nu)$. Recall $a_{ij}= \log (\rho(i,j)^2)$ (as in \Cref{polyhedral structure}). So in this case, $a_{23}=2\log \mu$, $a_{24}=2\log\lambda$, $a_{34}=2\log \nu$ and $a_{1i}=0$ for all $i\in Z\setminus\{1\}$.
	The highest possible dimensional of a polyhedral cell in any $\MM(Z,\rho)\in \TT(Z)$ is at most $2$. First, we will try to understand those.
	We consider three situations:  
	\begin{enumerate}
		\item Consider the relation $R=\{(1,2),(2,1),(3,4),(4,3)\}$ with graph $\Gamma_R$ 
		\begin{figure}[H]
			\centering
			\begin{tikzpicture}[scale=0.5,line cap=round,line join=round,>=triangle 45,x=1cm,y=1cm]
				\clip(1.75,1.75) rectangle (4.25,4.25);
				\draw [line width=0.4pt] (2,4)-- (4,4);
				\draw [line width=0.4pt] (2,2)-- (4,2);
				\begin{scriptsize}
					\draw [fill=black] (2,4) circle (2.5pt);
					\draw[color=black] (1.6,4.1) node {$1$};
					\draw [fill=black] (4,4) circle (2.5pt);
					\draw[color=black] (4.4,4.1) node {$2$};
					\draw [fill=black] (2,2) circle (2.5pt);
					\draw[color=black] (1.6,1.9) node {$4$};
					\draw [fill=black] (4,2) circle (2.5pt);
					\draw[color=black] (4.4,1.9) node {$3$};
				\end{scriptsize}
			\end{tikzpicture}
		\end{figure}\noindent which has two even cycles. So the cell $C(R)^*$, if non-empty, must be $2$-dimensional. Then the closed cell $C(R)$ in $\MM(Z,\rho)$,
		\begin{equation*}
			C(R)=\bigg\{\ \big(t_1,-t_1,t_2,-t_3-a_{34}\big)\ \big|\ a_{24}-a_{34}\le t_1+t_2\le 0,\ a_{23}\le t_1-t_2\le a_{34}\ \bigg\}
		\end{equation*} is non-empty if and only if $a_{23}\le a_{34}$ and $a_{24}\le a_{34}$, if and only if $\max\{\mu,\lambda\}\le \nu$.\\
		
		\item Similarly, for $R=\{ (1,3),(3,1),(2,4),(4,2) \}$ with the graph $\Gamma_R$ being,
		\begin{figure}[H]
			\centering
			\begin{tikzpicture}[scale=0.5,line cap=round,line join=round,>=triangle 45,x=1cm,y=1cm]
				\clip(1.75,1.75) rectangle (4.25,4.25);
				\draw [line width=0.4pt] (2,4)-- (4,2);
				\draw [line width=0.4pt] (2,2)-- (4,4);
				\begin{scriptsize}
					\draw [fill=black] (2,4) circle (2.5pt);
					\draw[color=black] (1.6,4.1) node {$1$};
					\draw [fill=black] (4,4) circle (2.5pt);
					\draw[color=black] (4.4,4.1) node {$2$};
					\draw [fill=black] (2,2) circle (2.5pt);
					\draw[color=black] (1.6,1.9) node {$4$};
					\draw [fill=black] (4,2) circle (2.5pt);
					\draw[color=black] (4.4,1.9) node {$3$};
				\end{scriptsize}
			\end{tikzpicture}
		\end{figure}\noindent then the polyhedral cell 
		\begin{equation*}
			C(R)=\bigg\{\ \big(t_1,t_2,-t_1,-t_2-a_{24}\big)\ \big|\ a_{34}-a_{24}\le t_1+t_2\le 0,\ a_{23}\le t_1-t_2\le a_{24}\ \bigg\}
		\end{equation*}
		is non-empty if and only if $a_{23}\le a_{24}$ and $a_{34}\le a_{24}$, if and only if $\max\{\mu,\nu\}\le \lambda$.\\
		
		\item Finally, for the relation $R=\{(1,4),(4,1),(2,3),(3,2)\}$ with $\Gamma_R$ being,
		\begin{figure}[H]
			\centering
			\begin{tikzpicture}[scale=0.5,line cap=round,line join=round,>=triangle 45,x=1cm,y=1cm]
				\clip(1.75,1.75) rectangle (4.25,4.25);
				\draw [line width=0.4pt] (2,4)-- (2,2);
				\draw [line width=0.4pt] (4,4)-- (4,2);
				\begin{scriptsize}
					\draw [fill=black] (2,4) circle (2.5pt);
					\draw[color=black] (1.6,4.1) node {$1$};
					\draw [fill=black] (4,4) circle (2.5pt);
					\draw[color=black] (4.4,4.1) node {$2$};
					\draw [fill=black] (2,2) circle (2.5pt);
					\draw[color=black] (1.6,1.9) node {$4$};
					\draw [fill=black] (4,2) circle (2.5pt);
					\draw[color=black] (4.4,1.9) node {$3$};
				\end{scriptsize}
			\end{tikzpicture}
		\end{figure}
		\noindent then the polyhedral cell
		\begin{equation*}
			C(R)=\bigg\{\ \big(t_1,t_2,-t_2-a_{23},-t_1\big)\ \big|\ a_{34}-a_{23}\le t_1+t_2\le 0,\ a_{24}\le t_1-t_2\le a_{23}\ \bigg\}
		\end{equation*}
		is non-empty if and only if $a_{24}\le a_{23}$ and $a_{34}\le a_{23}$, if and only if $\max\{\lambda,\nu\}\le \mu$.\\
	\end{enumerate}
	
	Observe that one of the three situations must occur. Let us suppose we are in the situation $\max\{\mu,\lambda\}\le\nu$; the other situations can be dealt with similarly. Then we have a non-empty rectangular cell (possibly a degenerate rectangle) $C(R)$ in $\MM(Z,\rho)$ for the Type $2$ relation $R=\{(1,2),(2,1),(3,4),(4,3)\}$. Now we look at the four Type $1$ relations $R_0^i$, for $i\in Z$ with the graphs being 
	\begin{figure}[H]
		\centering
		\begin{tikzpicture}[scale=0.5,line cap=round,line join=round,>=triangle 45,x=1cm,y=1cm]
			\clip(1.75,1.75) rectangle (19.25,4.25);
			\draw [line width=0.4pt] (2,4)-- (2,2);
			\draw [line width=0.4pt] (2,4)-- (4,2);
			\draw [line width=0.4pt] (2,4)-- (4,4);
			\draw [line width=0.4pt] (7,4)-- (9,4);
			\draw [line width=0.4pt] (9,4)-- (7,2);
			\draw [line width=0.4pt] (9,4)-- (9,2);
			\draw [line width=0.4pt] (12,4)-- (14,2);
			\draw [line width=0.4pt] (14,4)-- (14,2);
			\draw [line width=0.4pt] (12,2)-- (14,2);
			\draw [line width=0.4pt] (17,4)-- (17,2);
			\draw [line width=0.4pt] (17,2)-- (19,4);
			\draw [line width=0.4pt] (17,2)-- (19,2);
			\begin{scriptsize}
				\draw [fill=black] (2,4) circle (2.5pt);
				\draw[color=black] (1.6,4.1) node {$1$};
				\draw [fill=black] (4,4) circle (2.5pt);
				\draw[color=black] (4.4,4.1) node {$2$};
				\draw [fill=black] (2,2) circle (2.5pt);
				\draw[color=black] (1.6,1.9) node {$4$};
				\draw [fill=black] (4,2) circle (2.5pt);
				\draw[color=black] (4.4,1.9) node {$3$};
				\draw [fill=black] (7,4) circle (2.5pt);
				\draw[color=black] (6.6,4.1) node {$1$};
				\draw [fill=black] (7,2) circle (2.5pt);
				\draw[color=black] (6.6,1.9) node {$4$};
				\draw [fill=black] (9,2) circle (2.5pt);
				\draw[color=black] (9.4,1.9) node {$3$};
				\draw [fill=black] (9,4) circle (2.5pt);
				\draw[color=black] (9.4,4.1) node {$2$};
				\draw [fill=black] (12,4) circle (2.5pt);
				\draw[color=black] (11.6,4.1) node {$1$};
				\draw [fill=black] (12,2) circle (2.5pt);
				\draw[color=black] (11.6,1.9) node {$4$};
				\draw [fill=black] (14,4) circle (2.5pt);
				\draw[color=black] (14.4,4.1) node {$2$};
				\draw [fill=black] (14,2) circle (2.5pt);
				\draw[color=black] (14.4,1.9) node {$3$};
				\draw [fill=black] (17,4) circle (2.5pt);
				\draw[color=black] (16.6,4.1) node {$1$};
				\draw [fill=black] (17,2) circle (2.5pt);
				\draw[color=black] (16.6,1.9) node {$4$};
				\draw [fill=black] (19,4) circle (2.5pt);
				\draw[color=black] (19.4,4.1) node {$2$};
				\draw [fill=black] (19,2) circle (2.5pt);
				\draw[color=black] (19.4,1.9) node {$3$};
			\end{scriptsize}
		\end{tikzpicture}
	\end{figure} \noindent respectively. From \Cref{polyhedral structure} \eqref{ray}, we can describe the rays associated with these relations as follows:
	\begin{equation*}
		\begin{split}
			C(R_0^1) &= \bigg\{\ \big(t, -t, -t, -t\big)\ \big|\ t \geq \frac{a_{34}}{2} = \log \nu\ \bigg\},\\
			C(R_0^2) &= \bigg\{\ \big(-t,\ t, -(t + a_{23}), -(t + a_{24})\big)\ \big|\ t \geq \frac{a_{34} - a_{23} - a_{24}}{2} = \log \nu - \log \mu - \log \lambda\ \bigg\},\\
			C(R_0^3) &= \bigg\{\ \big(-t, -(t + a_{23}),\ t, -(t + a_{34})\big)\ \big|\ t \geq \frac{-a_{24}}{2} = -\log \lambda\ \bigg\},\\
			C(R_0^4) &= \bigg\{\ \big(-t, -(t + a_{24}), -(t + a_{23}),\ t\big)\ \big|\ t \geq \frac{-a_{23}}{2} = -\log \mu\ \bigg\}.
		\end{split}
	\end{equation*}
	Furthermore, the four rays originate from the four vertices of the rectangle $C(R)$ mentioned above, which are determined by straightforward calculations. The Moebius space $\MM(Z,\rho)$ is the union of the polyhedron $C(R)$ and the four rays.
	
	Now we will see how the structure of $\MM(Z,\rho)\in \TT(Z)$ varies as $(\mu,\nu,\lambda)$ varies in the parameter space $\mathring{\Delta}^2$. We will consider the following six cases.
	\begin{figure}[H]
		\centering
		\includegraphics[width=0.5\linewidth]{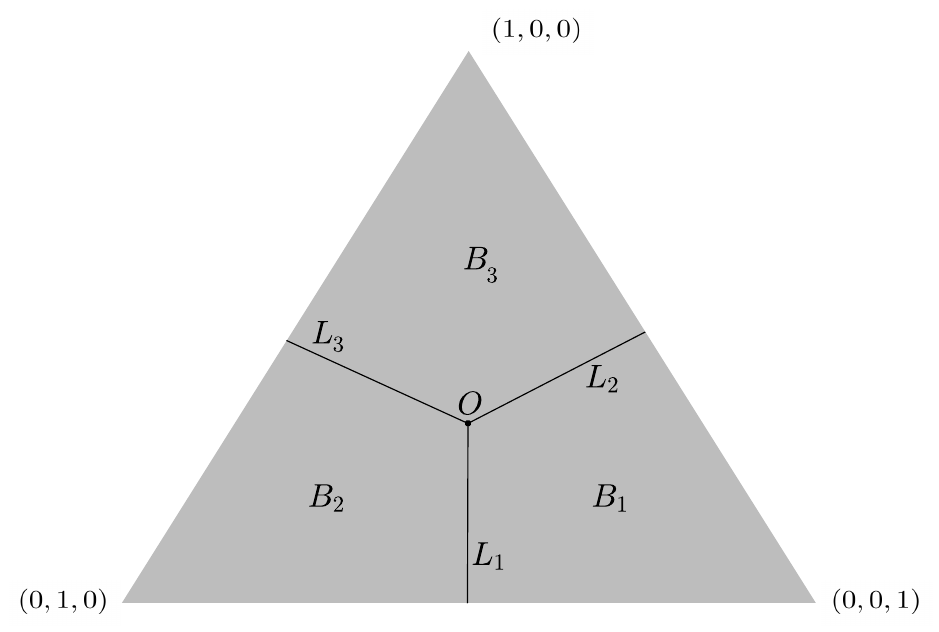}
		\caption{Subdivisions of $\mathring{\Delta}^2$}
		\label{fig:simplex}
	\end{figure}
	
	\begin{enumerate}[(i)]
		\item Suppose $(\mu,\lambda,\nu)\in B_1$ the open region in $\mathring{\Delta}^2$ (see \Cref{fig:simplex}) such that we have strict inequality, $\max\{\mu,\lambda\}<\nu$, then $C(R)^*$ is non-empty with the polyhedral cell $C(R)$ is a $2$-dimensional rectangle with length and breadth being $$(a_{34}-a_{24})= 2\log\Big(\frac{\nu}{\lambda}\Big)\ \hbox{ and }\ (a_{34}-a_{23})=2\log\Big(\frac{\nu}{\mu}\Big).$$ Then $\MM(Z,\rho)$ has the following structure.
		\begin{figure}[H]
			\centering
			\includegraphics[width=0.5\linewidth]{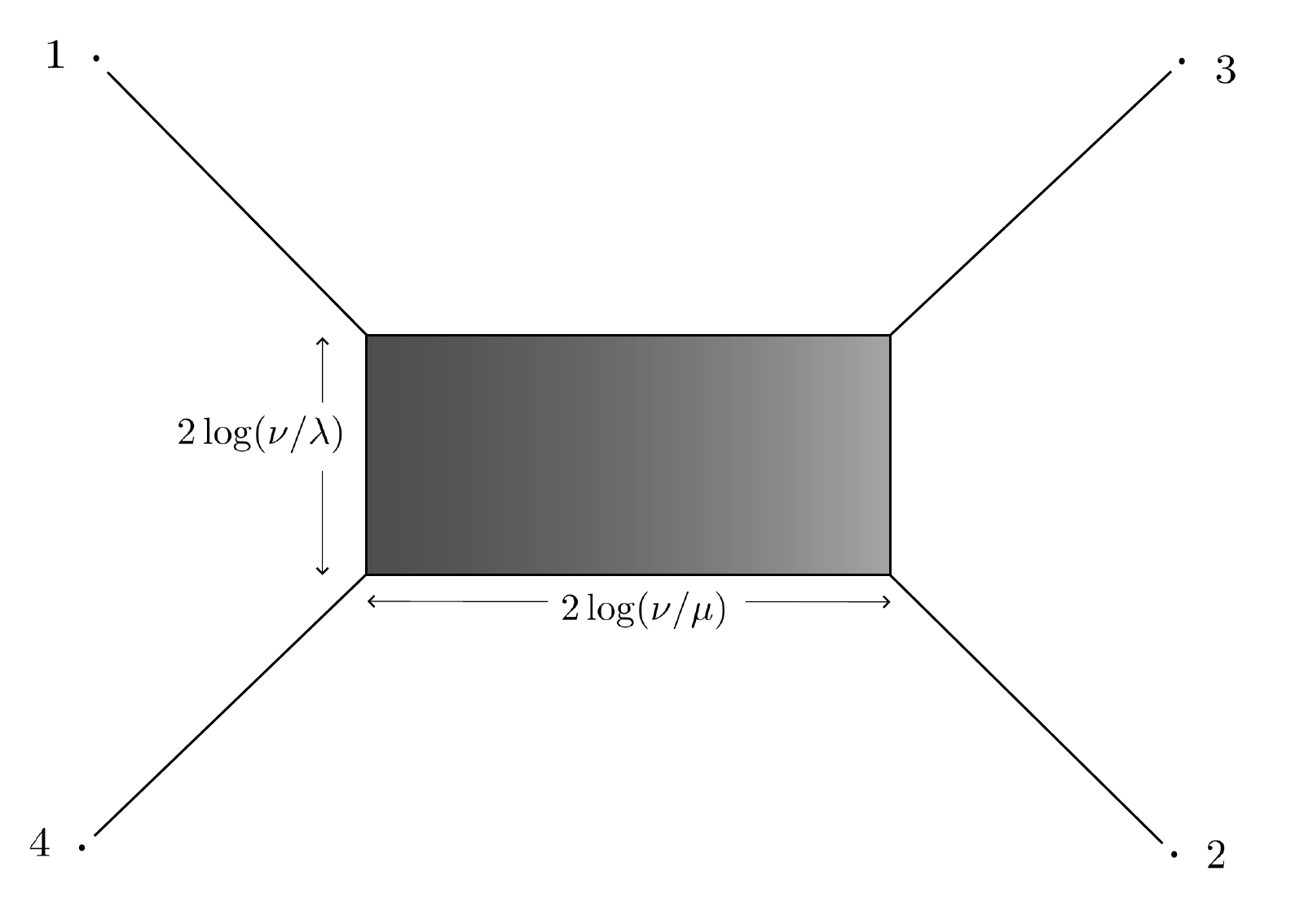}
			\caption{}
			\label{fig:4 boundary points 1}
		\end{figure}
		\item Similarly, if we have $(\mu,\lambda,\nu)\in B_3$, that is $\max\{\mu,\nu\}<\lambda$. So we are in the situation (2) with $C(R)^*$ non-empty. Then $\MM(Z,\rho)$ have the following structure
		\begin{figure}[H]
			\centering
			\includegraphics[width=0.5\linewidth]{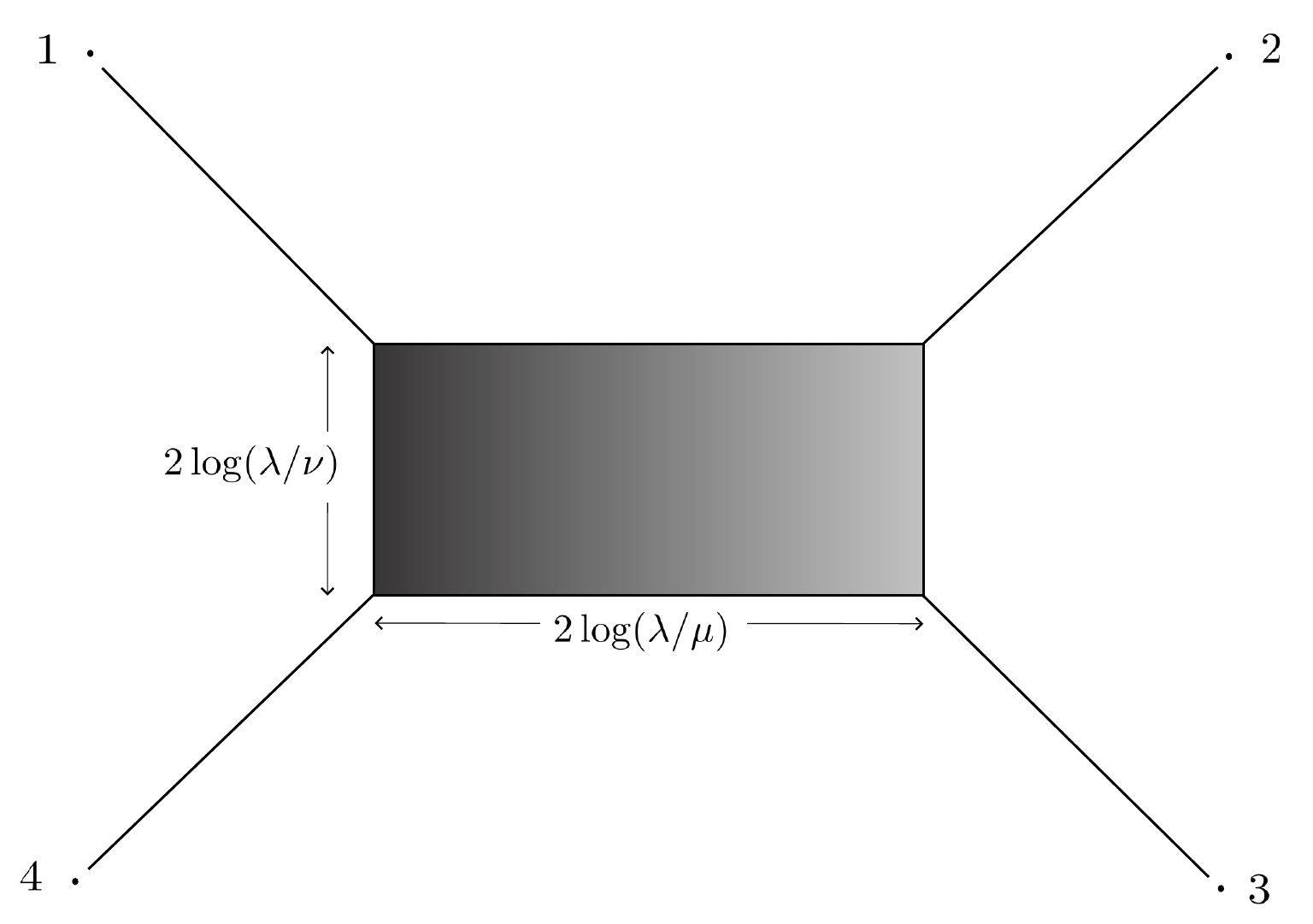}
			\caption{}
			\label{fig:4 boundary points 2}
		\end{figure}
		\item Also, for $(\mu,\lambda,\nu)\in B_2$, that is $\max\{\lambda,\nu\}< \mu$, situation (3) with $C(R)^*$ non-empty. The structure of $\MM(Z,\rho)$ is the following:
		\begin{figure}[H]
			\centering
			\includegraphics[width=0.5\linewidth]{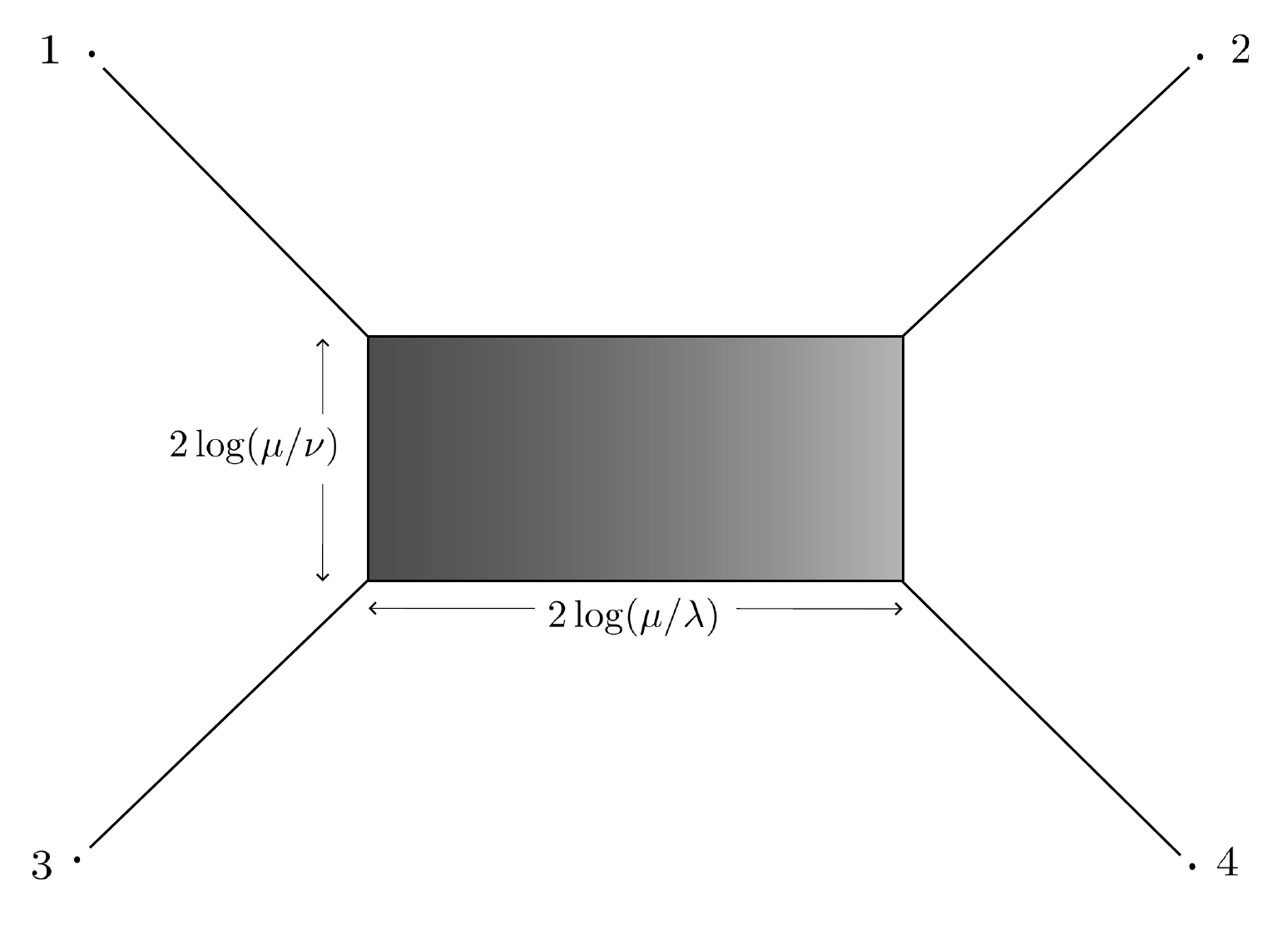}
			\caption{}
			\label{fig:4 boundary points 3}
		\end{figure}
		\item If we have $(\mu,\lambda,\nu)\in L_1$, the open line segment in $\mathring{\Delta}^2$ such that $\mu < \lambda = \nu$ (see \Cref{fig:simplex}). Then we are in the intersection of situations (1) and (2), where the rectangular cell $C(R)$ becomes a line segment of length 
		$$a_{34} - a_{23} = 2\log\left(\frac{\nu}{\mu}\right) = 2\log\left(\frac{\lambda}{\mu}\right) = a_{24} - a_{23}.$$ 
		Here $C(R)=\overline{C(R')^*}$ where $R'$ is an antipodal relation with graph 
		\begin{figure}[H]
			\centering
			\begin{tikzpicture}[scale=0.5,line cap=round,line join=round,>=triangle 45,x=1cm,y=1cm]
				\clip(1.5,1.5) rectangle (4.5,4.5);
				\draw [line width=0.4pt] (2,4)-- (4,4);
				\draw [line width=0.4pt] (2,2)-- (4,2);
				\draw [line width=0.4pt] (2,4)-- (4,2);
				\draw [line width=0.4pt] (2,2)-- (4,4);			
				\begin{scriptsize}
					\draw [fill=black] (2,4) circle (2.5pt);
					\draw[color=black] (1.6,4.1) node {$1$};
					\draw [fill=black] (4,4) circle (2.5pt);
					\draw[color=black] (4.4,4.1) node {$2$};
					\draw [fill=black] (2,2) circle (2.5pt);
					\draw[color=black] (1.6,1.9) node {$4$};
					\draw [fill=black] (4,2) circle (2.5pt);
					\draw[color=black] (4.4,1.9) node {$3$};
				\end{scriptsize}
			\end{tikzpicture}
		\end{figure} \noindent union of the graphs from situations (1) and (2). In this case, $\MM(Z, \rho)$ is a tree (see \Cref{fig:trees}).\\
		
		\item Similarly, if $(\mu,\lambda,\nu)\in L_2\subseteq \mathring{\Delta}^2$, that is $\lambda < \mu = \nu$, then the rectangular cell again becomes a line segment of length 
		$$a_{34} - a_{24} = 2\log\left(\frac{\nu}{\lambda}\right) = 2\log\left(\frac{\mu}{\lambda}\right) = a_{23} - a_{24},$$ 
		and $\MM(Z, \rho)$ is a tree (see \Cref{fig:trees}).\\
		
		\item Also, if $(\mu,\lambda,\nu)\in L_3\subseteq \mathring{\Delta}^2$, that is  $\nu < \mu = \lambda$, then the line segment is of length 
		$$a_{23} - a_{34} = 2\log\left(\frac{\mu}{\nu}\right) = 2\log\left(\frac{\lambda}{\nu}\right) = a_{24} - a_{34},$$ 
		with $\MM(Z, \rho)$ having the same tree structure as in the above two cases (see \Cref{fig:trees}).
		\begin{figure}[H]
			\centering
			\includegraphics[width=1\linewidth]{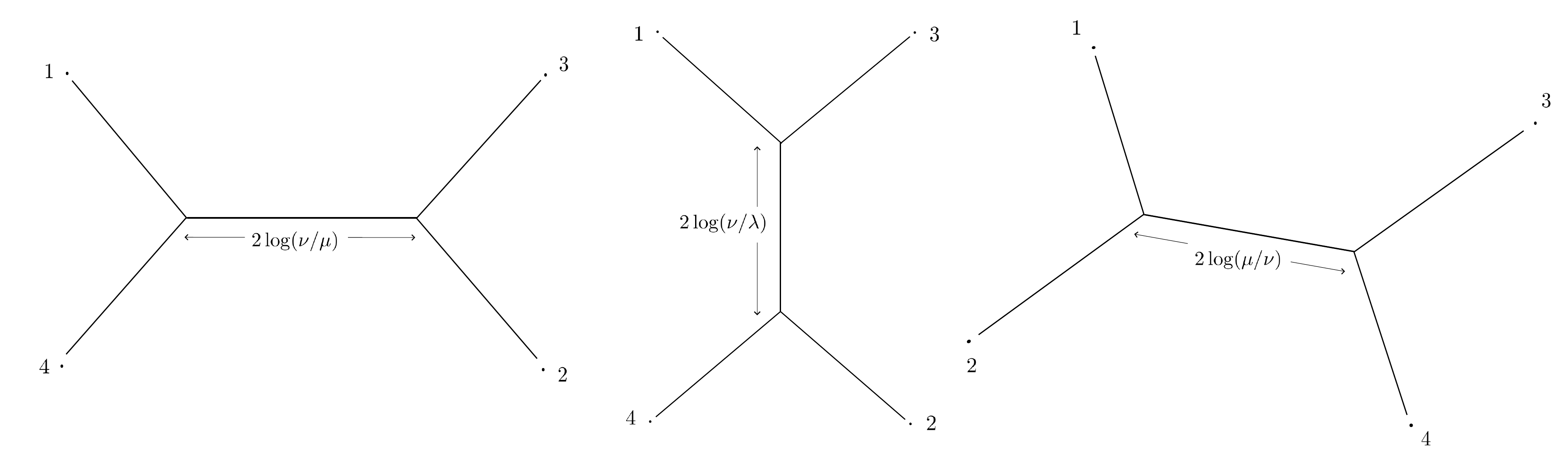}
			\caption{Structure of $\MM(Z,\rho)$ in last three cases respectively (from left to right)}
			\label{fig:trees}
		\end{figure}
		\item In the case $(\mu,\lambda,\nu)=O$ that is $\mu=\lambda=\nu$ the $C(R)$ collapses to a point. $\MM(Z,\rho)$ is star shaped tree.
		\begin{figure}[H]
			\centering
			\includegraphics[width=0.27\linewidth]{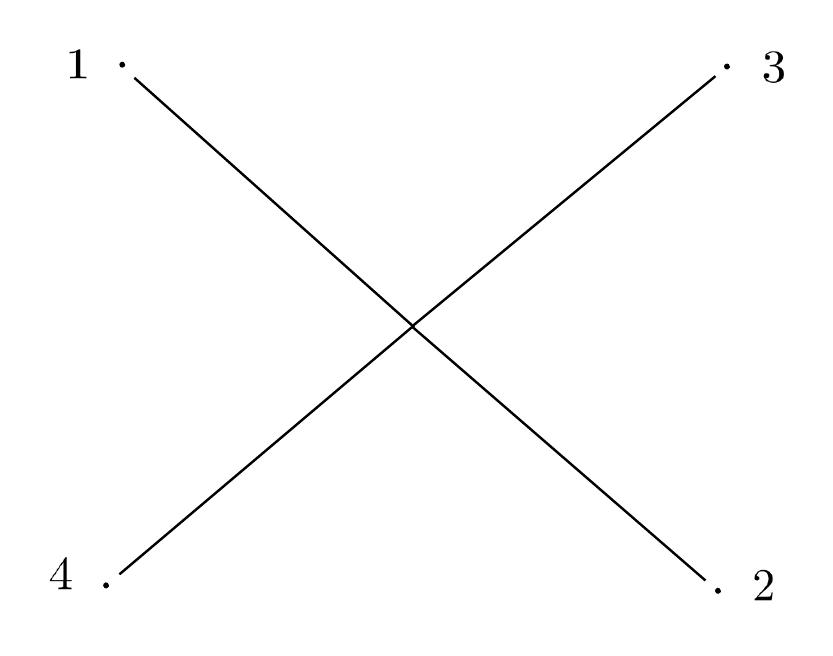}
			\caption{}
			\label{fig:trees star}
		\end{figure}
	\end{enumerate}
	Note, for any two points $(\mu, \lambda, \nu)$ and $(\mu', \lambda', \nu')$ in the union of the three open regions $B_1 \cup B_2 \cup B_3$, there exists a homeomorphism between the corresponding Moebius spaces $\MM(\rho)$ and $\MM(\rho')$. Thus, these spaces share the same Teichmuller spaces, up to a specific bijection, as seen in \eqref{identification of Teichmuller space}. The same holds for $(\mu, \lambda, \nu)$ and $(\mu', \lambda', \nu')$ in $L_1 \cup L_2 \cup L_3$.
	
	However, observe when the two points are from the same region, say $B_i$ (for some $i \in \{1, 2, 3\}$), they are homeomorphically equivalent, meaning there exists a homeomorphism between the two Moebius spaces where the boundary map is the identity. Furthermore, if $\MM(\rho)$ corresponds to a point in $B_i$ under the parametrization, then any $\MM(\rho') \in [\MM(\rho')]_H$ corresponds to a point in the same $B_i$. Hence, for any point $(\mu, \lambda, \nu) = \Phi(\MM(\rho)) \in B_i$, the homeomorphism equivalence class $[\MM(\rho)]_H$ is parametrized by $B_i$ via $\Phi$. The same is true for the open segments $L_i$: if $\Phi(\MM(\rho)) \in L_i$ for some $i \in \{1, 2, 3\}$, then $[\MM(\rho)]_H$ is parametrized by $L_i$. If $\Phi(\MM(\rho))=O$ then the homeomorphism class $[\MM(\rho)]_H$, is singleton.
	
	\begin{rmk}\label{Teichmuller space in the case of 4 points}  
		The big-Teichmuller space $\TT(Z)$ is the disjoint union of homeomorphism equivalence classes. For $Z$ containing four elements, the $2$-simplex $\Delta^2$ (which is the closure of the open simplex $\mathring{\Delta}^2$, the parameter space for $\TT(Z)$), admits a polyhedral complex structure, induced by the Homeomorphism equivalence classes. The relative interior of each polyhedron corresponds to the image of a Homeomorphism equivalence class under the parametrization $\Phi$. Consequently, the disjoint union of these relative interiors of polyhedra gives the open simplex $\mathring{\Delta}^2$ (see \Cref{fig:simplex}). Therefore, the Homeomorphism classes in $(\TT(Z),d_{\hbox{M\"ob}})$ are homeomorphic to open polyhedrons in Euclidean space of appropriate dimensions. As any Teichmuller space $\TT(\MM(\rho))$ can be parametrized by corresponding Homeomorphism equivalence class $[\MM(\rho)]_H$. In this case, there are three distinct Teichmuller spaces $\TT(\MM(\rho))$, up to identification \eqref{identification of Teichmuller space}, of dimensions $0,1$ and $2$ respectively.
	\end{rmk}   
	
	\bigskip
	
	\section{Open problems}\label{problems}
	
	We conclude with some questions which arise naturally in this context:
	
	\medskip
	
	\noindent (1) Characterization of the $l_{\infty}$-polyhedral complexes arising as maximal Gromov hyperbolic spaces: While \Cref{1st Thm} states that any maximal Gromov hyperbolic space with finite boundary can be realized as an $l_{\infty}$-polyhedral complex, it is natural to try and characterize exactly which polyhedral complexes occur as maximal Gromov hyperbolic spaces. One has here an obvious necessary condition, that the geometry of the polyhedral complex at infinity should be trivial, i.e. the complex should be given by attaching $n$ half-lines to a compact polyhedral complex. What additional conditions would be necessary and sufficient for the polyhedral complex to be a maximal Gromov hyperbolic space?
	
	\medskip
	
	\noindent (2) Possible polyhedral structures for maximal Gromov hyperbolic spaces with countable boundary: It would be interesting to see whether one has a polyhedral structure for maximal Gromov hyperbolic spaces $X$ with boundary $\partial X$ a countable compact space. At least for the case where $\partial X$ has finitely many non-isolated points one may hope to realize $X$ as a polyhedral complex (in $C(\partial X)$) with countably many cells.
	
	\medskip
	
	\noindent (3) Topology of the Teichmuller spaces: The central problem here is to understand, for a finite set $Z$ of cardinality $n$, the topology of the Teichmuller spaces ${\mathcal{T}}({\mathcal{M}}(\rho_0))$ as $\rho_0$ ranges over all antipodal functions on $Z$. For the case of $n =4$ discussed in the previous section, all the Teichmuller spaces occurring in this example are topologically cells of various dimensions, and moreover under the identification of the big-Teichmuller space with an open simplex, the various Teichmuller spaces are identified with polyhedra in the simplex, giving a polyhedral structure on the simplex. It would be interesting to see if the same holds true for all $n \geq 4$.
	
	\section*{Acknowledgements}
	\noindent The second author is supported by research fellowship from Indian Statistical Institute.
	\bigskip
	
	\bibliography{References}

\begin{thebibliography}{HKM19}

\bibitem[Ale98]{aleksandrov1998combinatorial}
Pavel~S Aleksandrov.
\newblock {\em Combinatorial topology}, volume~1.
\newblock Courier Corporation, 1998.

\bibitem[AP56]{aronszajn1956extension}
Nachman Aronszajn and Prom Panitchpakdi.
\newblock Extension of uniformly continuous transformations and hyperconvex
  metric spaces.
\newblock {\em Pacific Journal of Mathematics}, 6(3):405--439, 1956.

\bibitem[BH99]{bridson1999metric}
Martin~R. Bridson and Andr{\'e} Haefliger.
\newblock {\em Metric spaces of non-positive curvature}, volume 319 of {\em
  Grundlehren der mathematischen Wissenschaften}.
\newblock Springer Berlin Heidelberg, 1999.

\bibitem[Bis15]{biswas2015moebius}
Kingshook Biswas.
\newblock On {M}oebius and conformal maps between boundaries of {C}{A}{T}(-1)
  spaces.
\newblock {\em Annales de l'Institut Fourier}, 65(3):1387--1422, 2015.

\bibitem[Bis21]{biswas2021quasi}
Kingshook Biswas.
\newblock Quasi-metric antipodal spaces and maximal {G}romov hyperbolic spaces.
\newblock {\em \href{https://arxiv.org/abs/2109.03725v2}{arXiv preprint
  arXiv:2109.03725v2}}, 2021.

\bibitem[Bis24]{biswas2024quasi}
Kingshook Biswas.
\newblock Quasi-metric antipodal spaces and maximal {G}romov hyperbolic spaces.
\newblock {\em Geometriae Dedicata}, 218(2):53, 2024.

\bibitem[BS00]{bonk-schramm}
M.~Bonk and O.~Schramm.
\newblock Embeddings of {G}romov hyperbolic spaces.
\newblock {\em Geometric and Functional Analysis, {\bf 10}}, pages 266--306,
  2000.

\bibitem[BS07]{buyalo2007elements}
Sergei Buyalo and Viktor Schroeder.
\newblock {\em Elements of asymptotic geometry}, volume~3.
\newblock European Mathematical Society, 2007.

\bibitem[BS17]{beyrer-schroeder}
J.~Beyrer and V.~Schroeder.
\newblock Trees and ultrametric {M}oebius structures.
\newblock {\em p-adic Numbers, Ultrametric Analysis and Applications, {\bf 9}},
  pages 247--256, 2017.

\bibitem[DHM98]{dress1998aa}
A.~W.~M. Dress, K.~T. Huber, and V.~Moulton.
\newblock A comparison between two distinct continuous models in projective
  cluster theory: The median and the tight-span construction.
\newblock {\em Annals of Combinatorics}, 2(4):299--311, 1998.

\bibitem[DHM02]{dress2002explicit}
Andreas Dress, K.T. Huber, and Vincent Moulton.
\newblock An explicit computation of the injective hull of certain finite
  metric spaces in terms of their associated buneman complex.
\newblock {\em Advances in Mathematics}, 168(1):1--28, 2002.

\bibitem[DMT96]{dress1996Ttheory}
Andreas Dress, Vincent Moulton, and Werner Terhalle.
\newblock T-theory: An overview.
\newblock {\em European Journal of Combinatorics}, 17(2):161--175, 1996.

\bibitem[Dre84]{dress1984trees}
Andreas~WM Dress.
\newblock Trees, tight extensions of metric spaces, and the cohomological
  dimension of certain groups: a note on combinatorial properties of metric
  spaces.
\newblock {\em Advances in Mathematics}, 53(3):321--402, 1984.

\bibitem[DS04]{develin2004}
Mike Develin and Bernd Sturmfels.
\newblock Tropical convexity.
\newblock {\em Documenta Mathematica}, 9:1--27, 2004.

\bibitem[Hat02]{hatcher2002algebraic}
A.~Hatcher.
\newblock {\em Algebraic Topology}.
\newblock Algebraic Topology. Cambridge University Press, 2002.

\bibitem[HKM19]{huber2019polytopal}
K.T. Huber, J.~Koolen, and V.~Moulton.
\newblock The polytopal structure of the tight-span of a totally
  split-decomposable metric.
\newblock {\em Discrete Mathematics}, 342(3):868--878, 2019.

\bibitem[Isb64]{isbell1964six}
John~R Isbell.
\newblock Six theorems about injective metric spaces.
\newblock {\em Commentarii Mathematici Helvetici}, 39(1):65--76, 1964.

\bibitem[Jor10]{jordi}
J.~Jordi.
\newblock Interplay between interior and boundary geometry in {G}romov
  hyperbolic spaces.
\newblock {\em Geom Dedicata, 149}, pages 129--154, 2010.

\bibitem[Lan13]{lang2013injective}
Urs Lang.
\newblock Injective hulls of certain discrete metric spaces and groups.
\newblock {\em Journal of Topology and Analysis}, 5(03):297--331, 2013.

\bibitem[Pav16a]{pavon2016geometry}
Ma{\"e}l Pav{\'o}n.
\newblock {\em Geometry and structure of metric injective hulls}.
\newblock PhD thesis, ETH Zurich, 2016.

\bibitem[Pav16b]{pavon2016injective}
Ma{\"e}l Pav{\'o}n.
\newblock Injective convex polyhedra.
\newblock {\em Discrete \& Computational Geometry}, 56(3):592--630, 2016.

\bibitem[Pet23]{petrunin2023pure}
Anton Petrunin.
\newblock {\em Pure Metric Geometry}.
\newblock SpringerBriefs in Mathematics. Springer Cham, first edition, 2023.

\bibitem[Zie12]{ziegler2012lectures}
G{\"u}nter~M Ziegler.
\newblock {\em Lectures on polytopes}, volume 152.
\newblock Springer Science \& Business Media, 2012.

\end{thebibliography}
	\bibliographystyle{alpha}
	
\end{document}